\newcommand{\Title}[1]{\bigskip\bigskip\centerline{\bf #1}\bigskip}
\newcommand{\Author}[1]{\medskip\centerline{ \it #1}}
\newcommand{\Affiliation}[1]{\medskip\centerline{#1}}
\newcommand{\Email}[1]{\medskip\centerline{#1}\bigskip}
\begin{document}

\newcommand{\N}{\mbox {$\mathbb N $}}
\newcommand{\Z}{\mbox {$\mathbb Z $}}
\newcommand{\Q}{\mbox {$\mathbb Q $}}
\newcommand{\R}{\mbox {$\mathbb R $}}
\newcommand{\lo }{\longrightarrow }
\newcommand{\ul}{\underleftarrow }
\newcommand{\rl}{\underrightarrow }
\newcommand{\rs }{\rightsquigarrow }
\newcommand{\ra }{\rightarrow }
\newcommand{\dd }{\rightsquigarrow }
\newcommand{\ol }{\overline }
\newcommand{\la }{\langle }
\newcommand{\tr }{\triangle }
\newcommand{\xr }{\xrightarrow }
\newcommand{\de }{\delta }
\newcommand{\pa }{\partial }
\newcommand{\LR }{\Longleftrightarrow }
\newcommand{\Ri }{\Rightarrow }
\newcommand{\va }{\varphi }
\newcommand{\Den}{{\rm Den}\,}
\newcommand{\Ker}{{\rm Ker}\,}
\newcommand{\Reg}{{\rm Reg}\,}
\newcommand{\Fix}{{\rm Fix}\,}
\newcommand{\Sup}{{\rm Sup}\,}
\newcommand{\Inf}{{\rm Inf}\,}
\newcommand{\Img}{{\rm Im}\,}
\newcommand{\Id}{{\rm Id}\,}
\newcommand{\ord}{{\rm ord}\,}

\newtheorem{theorem}{Theorem}[section]
\newtheorem{lemma}[theorem]{Lemma}
\newtheorem{proposition}[theorem]{Proposition}
\newtheorem{corollary}[theorem]{Corollary}
\newtheorem{definition}[theorem]{Definition}
\newtheorem{example}[theorem]{Example}
\newtheorem{examples}[theorem]{Examples}
\newtheorem{xca}[theorem]{Exercise}
\theoremstyle{remark}
\newtheorem{remark}[theorem]{Remark}
\newtheorem{remarks}[theorem]{Remarks}
\numberwithin{equation}{section}

\def\leftmark{L.C. Ciungu}

\Title{QUOTIENT QUANTUM-WAJSBERG ALGEBRAS} 
\title[Quotient quantum-Wajsberg algebras]{}
                                                                           
\Author{\textbf{LAVINIA CORINA CIUNGU}}
\Affiliation{Department of Mathematics} 
\Affiliation{St Francis College}
\Affiliation{180 Remsen Street, Brooklyn Heights, NY 11201-4398, USA}
\Email{lciungu@sfc.edu}

\begin{abstract} 
We define and study the notions of q-deductive systems, p-deductive systems, deductive systems, maximal and strongly maximal q-deductive systems in quantum-Wajsberg algebras.  
We also introduce the notion of congruences induced by deductive systems of a quantum-Wajsberg algebra, 
and we show that there is a relationship between congruences and deductive systems. 
Furthermore, we define the quotient quantum-Wajsberg algebra with respect to a deductive system, and prove that the 
quotient quantum-Wajsberg algebra is locally finite if and only if the deductive system is strongly maximal. 
Finally, we define the weakly linear and quasi-linear quantum-Wajsberg algebras, and give a characterization of 
weakly linear quotient quantum-Wajsberg algebras. \\ 

\textbf{Keywords:} {quantum-Wajsberg algebra, q-deductive system, p-deductive system, deductive system, congruence, quotient quantum-Wajsberg algebra, weakly linearity, quasi-linearity} \\
\textbf{AMS classification (2020):} 06F35, 03G25, 06A06, 81P10
\end{abstract}

\maketitle

\section{Introduction} 

As G. Birkhoff and J. von Neumann showed in their paper ``The logic of quantum mechanics" (\cite{Birk1}), 
the set of assertions of quantum mechanics has different algebraic properties from a Boolean algebra.  
In the last decades, developing algebras related to the logical foundations of quantum mechanics became a 
central topic of research. 
Generally known as quantum structures, these algebras serve as models for the formalism of quantum mechanics. 
R. Giuntini introduced in \cite{Giunt1} the quantum-MV algebras as non-lattice generalizations of 
MV algebras (\cite{Chang, Cig1}), and as non-idempotent generalizations of orthomodular lattices (\cite{Kalm, Ptak}).   
These structures were intensively studied by R. Giuntini (\cite{Giunt2, Giunt3, Giunt4, Giunt5, Giunt6}), 
A. Dvure\v censkij and S. Pulmannov\'a (\cite{DvPu}), R. Giuntini and S. Pulmannov\'a (\cite{Giunt7}) and by 
A. Iorgulescu in \cite{Ior30, Ior31, Ior32, Ior33, Ior34, Ior35}. \\
\indent
Quantum-B algebras defined and investigated by W. Rump and Y.C. Yang (\cite{Rump2, Rump1}) 
arise from the concept of quantales, which was introduced in 1984 as a framework for quantum mechanics 
with a view toward non-commutative logic (\cite{Mulv1}). 
Many implicational algebras studied so far (effect algebras, residuated lattices, MV/BL/MTL algebras, bounded R$\ell$-monoids, hoops, BCK/BCI algebras), as well as their non-commutative versions, are quantum-B algebras. Interesting results on quantum-B algebras have been presented in \cite{Rump3, Rump4, Han1, Han2}. \\
\indent
We redefined in \cite{Ciu78} the quantum-MV algebras starting from involutive BE algebras, and we introduced and 
studied the notion of quantum-Wajsberg algebras. 
We proved that the quantum-Wajsberg algebras are equivalent to quantum-MV algebras and that Wajsberg algebras are 
both quantum-Wajsberg algebras and commutative quantum-B algebras. \\
\indent
In this paper, we define and study the notions of q-deductive systems, p-deductive systems, deductive systems, maximal  and strongly maximal q-deductive systems in quantum-Wajsberg algebras, and prove that any strongly maximal q-deductive system is maximal. If every q-deductive system is a deductive system, we show that the notions of maximal and strongly maximal q-deductive systems coincide. 
We also introduce the notion of congruences induced by deductive systems of a quantum-Wajsberg algebra, 
and prove that there is a relationship between congruences and deductive systems. 
Furthermore, we define the quotient quantum-Wajsberg algebra with respect to a deductive system, and show that the 
quotient quantum-Wajsberg is locally finite if and only if the deductive system is strongly maximal. 
We define the weakly linear quantum-Wajsberg algebras, we investigate their properties, and give a characterization of a weakly linear quotient quantum-Wajsberg algebra.
Finally, we define and characterize the notion of quasi-linear quantum-Wajsberg algebras.

$\vspace*{5mm}$

\section{Preliminaries}

In this section, we recall some basic notions and results regarding BCK algebras, Wajsberg algebras, BE algebras and quantum-Wajsberg algebras that will be used in the paper. Additionally, we prove new properties of quantum-Wajsberg algebras. For more details regarding the quantum-Wajsberg algebras we refer the reader to \cite{Ciu78}. \\
\indent
Starting from the systems of positive implicational calculus, weak systems of positive implicational calculus 
and BCI and BCK systems, in 1966 Y. Imai and K. Is\`eki introduced the \emph{BCK algebras} (\cite{Imai}). 
BCK algebras are also used in a dual form, with an implication $\ra$ and with one constant element $1$, 
that is the greatest element (\cite{Kim2}). 
A (dual) \emph{BCK algebra} is an algebra $(X,\ra,1)$ of type $(2,0)$ satisfying the following conditions, 
for all $x,y,z\in X$: \\
$(BCK_1)$ $(x\ra y)\ra ((y\ra z)\ra (x\ra z))=1;$ \\
$(BCK_2)$ $1\ra x=x;$ \\
$(BCK_3)$ $x\ra 1=1;$ \\
$(BCK_4)$ $x\ra y=1$ and $y\ra x=1$ imply $x=y$. \\
In this paper, we use the dual BCK algebras. 
If $(X,\ra,1)$ is a BCK algebra, for $x,y\in X$ we define the relation $\le$ by $x\le y$ if and only if $x\ra y=1$, 
and $\le$ is a partial order on $X$. \\
\indent
\emph{Wajsberg algebras} were introduced in 1984 by Font, Rodriguez and Torrens in \cite{Font1} as algebraic model 
of $\aleph_0$-valued \L ukasiewicz logic.   
A \emph{Wajsberg algebra} is an algebra $(X,\ra,^*,1)$ of type $(2,1,0)$ satisfying the following conditions 
for all $x,y,z\in X$: \\
$(W_1)$ $1\ra x=x;$ \\
$(W_2)$ $(y\ra z)\ra ((z\ra x)\ra (y\ra x))=1;$ \\
$(W_3)$ $(x\ra y)\ra y=(y\ra x)\ra x;$ \\
$(W_4)$ $(x^*\ra y^*)\ra (y\ra x)=1$. \\
Wajsberg algebras are bounded with $0=1^*$, and they are involutive. 
It was proved in \cite{Font1} that Wajsberg algebras are termwise equivalent to MV algebras. \\
\indent
\emph{BE algebras} were introduced in \cite{Kim1} as algebras $(X,\ra,1)$ of type $(2,0)$ satisfying the 
following conditions, for all $x,y,z\in X$: \\
$(BE_1)$ $x\ra x=1;$ \\
$(BE_2)$ $x\ra 1=1;$ \\
$(BE_3)$ $1\ra x=x;$ \\
$(BE_4)$ $x\ra (y\ra z)=y\ra (x\ra z)$. \\
A relation $\le$ is defined on $X$ by $x\le y$ iff $x\ra y=1$. 
A BE algebra $X$ is \emph{bounded} if there exists $0\in X$ such that $0\le x$, for all $x\in X$. 
In a bounded BE algebra $(X,\ra,0,1)$ we define $x^*=x\ra 0$, for all $x\in X$. 
A bounded BE algebra $X$ is called \emph{involutive} if $x^{**}=x$, for any $x\in X$. \\
A BE algebra $X$ is called \emph{commutative} if $(x\ra y)\ra y=(y\ra x)\ra x$, for all $x,y\in X$. 
A bounded BE algebra $X$ is called \emph{involutive} if $x^{**}=x$, for any $x\in X$. \\
Obviously, any (left-)BCK algebra is a (left-)BE algebra, but the exact connection between BE algebras and 
BCK algebras is made in the papers \cite{Ior16, Ior17}: a BCK algebra is a BE algebra satisfying $(BCK_4)$ (antisymmetry) and $(BCK_1)$. 
 
\begin{lemma} \label{qbe-10} Let $(X,\ra,1)$ be a BE algebra. Then the following hold for all $x,y,z\in X$: \\
$(1)$ $x\ra (y\ra x)=1;$ $(2)$ $x\le (x\ra y)\ra y$. \\
If $X$ is bounded, then: \\
$(3)$ $x\ra y^*=y\ra x^*;$ \\
$(4)$ $x\le x^{**}$. \\
If $X$ is involutive, then: \\
$(5)$ $x^*\ra y=y^*\ra x;$ \\
$(6)$ $x^*\ra y^*=y\ra x;$ \\
$(7)$ $(x^*\ra y)^*\ra z=x^*\ra (y^*\ra z);$ \\
$(8)$ $x\ra (y\ra z)=(x\ra y^*)^*\ra z$.   
\end{lemma}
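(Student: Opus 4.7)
The plan is to prove the eight items in order, with later items cascading from earlier ones. Throughout, the workhorse is axiom $(BE_4)$ (the swap property), together with the reading $x^{*}=x\to 0$ that is available once $X$ is bounded.

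For (1) I would apply $(BE_4)$ to swap $x$ and $y$, obtaining $x\to(y\to x)=y\to(x\to x)$, and finish with $(BE_1)$ and $(BE_2)$. For (2) the same swap gives $x\to((x\to y)\to y)=(x\to y)\to(x\to y)$, which is $1$ by $(BE_1)$. For (3), after unfolding $y^{*}=y\to 0$, the identity is precisely an instance of $(BE_4)$. For (4), I would write $x\to x^{**}=x\to(x^{*}\to 0)$, swap with $(BE_4)$ to get $x^{*}\to(x\to 0)=x^{*}\to x^{*}=1$.

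For the involutive items, the idea is to use $x^{**}=x$ freely to bring each identity into the shape of (3) or $(BE_4)$. For (5): write $x^{*}\to y=x^{*}\to y^{**}$ and apply (3). For (6): similarly $x^{*}\to y^{*}=y\to x^{**}=y\to x$ by (3) and involutivity. For (7) I would first use (5) plus $(x^{*}\to y)^{**}=x^{*}\to y$ to rewrite $(x^{*}\to y)^{*}\to z$ as $z^{*}\to(x^{*}\to y)$; then apply $(BE_4)$ to swap the outer $z^{*}$ and $x^{*}$, and finally apply (5) again inside to replace $z^{*}\to y$ by $y^{*}\to z$. For (8) the chain is parallel: use (3) plus involutivity to rewrite $y\to z$ as $z^{*}\to y^{*}$, apply $(BE_4)$ to move $z^{*}$ to the outside, and finish by (5) to land on $(x\to y^{*})^{*}\to z$.

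There is no genuine obstacle here; the only thing to track is the order in which $(BE_4)$, identity (3), and identity (5) are applied, especially in (7) and (8), where one has to alternate between $(BE_4)$-swaps and the contraposition rule (5) while exploiting $a=a^{**}$. Once the correct sequence of rewrites is chosen, each line reduces to one axiom or a previously proved part.
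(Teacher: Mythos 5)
Your proposal is correct and follows essentially the same route as the paper: each item is reduced to $(BE_4)$, the definition $x^*=x\ra 0$, and the earlier parts, with (7) and (8) handled by exactly the same alternation of $(BE_4)$-swaps and the contraposition identities (5)--(6). The only cosmetic difference is in (4), where the paper simply instantiates (2) with $y:=0$ instead of redoing the $(BE_4)$ computation.
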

\begin{proof}
$(1)$ Using $(BE_4)$, we have $1=y\ra 1=y\ra (x\ra x)=x\ra (y\ra x)$. \\
$(2)$ By $(BE_4)$, we have $x\ra ((x\ra y)\ra y)=1$, that is $x\le (x\ra y)\ra y$. \\
$(3)$ It follows from $(BE_4)$ for $z:=0$. \\
$(4)$ It follows by $(2)$ for $y:=0$. \\
$(5)$ Replace $x$ by $x^*$ and $y$ by $y^*$ in $(3)$. \\
$(6)$ Replace $y$ by $y^*$ in $(5)$. \\
$(7)$ Using $(5)$ and $(BE_4)$, we get: \\
$\hspace*{2cm}$ 
$(x^*\ra y)^*\ra z=z^*\ra (x^*\ra y)=x^*\ra (z^*\ra y)=x^*\ra (y^*\ra z)$. \\
$(8)$ Using $(BE_4)$, we have: \\ 
$\hspace*{2cm}$ $x\ra (y\ra z)=x\ra (z^*\ra y^*)=z^*\ra (x\ra y^*)=(x\ra y^*)^*\ra z$. 
\end{proof}

\noindent
In a BE algebra $X$, we define the additional operation: \\
$\hspace*{3cm}$ $x\Cup y=(x\ra y)\ra y$. \\
If $X$ is involutive, we define the operations: \\
$\hspace*{3cm}$ $x\Cap y=((x^*\ra y^*)\ra y^*)^*$, \\
$\hspace*{3cm}$ $x\odot y=(x\ra y^*)^*=(y\ra x^*)^*$, \\
and the relation $\le_Q$ by: \\
$\hspace*{3cm}$ $x\le_Q y$ iff $x=x\Cap y$. \\
Note that in algebras with implication, as BCK algebras, the join operation is usually denoted by $\vee$ and the
meet operation by $\wedge$. \\

\begin{proposition} \label{qbe-20} $\rm($\cite{Ciu78}$\rm)$ Let $X$ be an involutive BE algebra. 
Then the following hold for all $x,y,z\in X$: \\
$(1)$ $x\le_Q y$ implies $x=y\Cap x$ and $y=x\Cup y;$ \\
$(2)$ $\le_Q$ is reflexive and antisymmetric; \\
$(3)$ $x\Cap y=(x^*\Cup y^*)^*$ and $x\Cup y=(x^*\Cap y^*)^*;$ \\ 
$(4)$ $x\le_Q y$ implies $x\le y;$ \\
$(5)$ $0\le_Q x \le_Q 1;$ \\
$(6)$ $0\Cap x=x\Cap 0=0$ and $1\Cap x=x\Cap 1=x;$ \\
$(7)$ $x\Cap (y\Cap x)=y\Cap x$ and $x\Cap (x\Cap y)=x\Cap y;$ \\
$(8)$ $(x\Cap y)\ra z=(y\ra x)\ra (y\ra z);$ \\
$(9)$ $z\ra (x\Cup y)=(x\ra y)\ra (z\ra y);$ \\
$(10)$ $x\Cap y\le x,y\le x\Cup y$. 
\end{proposition}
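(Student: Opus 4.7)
The plan is to establish all ten items essentially by unfolding the definitions of $\Cap$, $\Cup$ and repeatedly applying Lemma \ref{qbe-10}; the ordering below lets the easier identities feed the harder ones. I would begin with the three items that are immediate rewritings. Part (3) is direct: since $(x^*\ra y^*)\ra y^*$ is by definition $x^*\Cup y^*$, we have $x\Cap y=(x^*\Cup y^*)^*$, and the dual follows by involutivity. Part (9) is a single line: $(BE_4)$ gives $z\ra((x\ra y)\ra y)=(x\ra y)\ra(z\ra y)$. For (8), I would first rewrite $x\Cap y=((y\ra x)\ra y^*)^*$ using Lemma \ref{qbe-10}(6), then apply Lemma \ref{qbe-10}(5), $(BE_4)$ and Lemma \ref{qbe-10}(6) in succession to obtain $(x\Cap y)\ra z=z^*\ra((y\ra x)\ra y^*)=(y\ra x)\ra(y\ra z)$.

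Next I would prove (10). Using the rewriting $x\Cap y=((y\ra x)\ra y^*)^*$, the bounds $(x\Cap y)\ra y=1$ and $(x\Cap y)\ra x=1$ reduce via Lemma \ref{qbe-10}(5), (6) and $(BE_4)$ to the trivial identities $(y\ra x)\ra 1=1$ and $(y\ra x)\ra(y\ra x)=1$. The companion bounds $x,y\le x\Cup y$ are, respectively, Lemma \ref{qbe-10}(2) and a one-line $(BE_4)$ computation. Part (4) is then immediate: $x\le_Q y$ says $x=x\Cap y\le y$. Parts (6) and (5) are direct substitutions: plugging $0$ or $1$ into $x\Cap y=((x^*\ra y^*)\ra y^*)^*$ and simplifying via $0^*=1$, $1^*=0$ and $1\ra u=u$, $u\ra 1=1$ gives the four cases of (6), and (5) follows at once from $0\Cap x=0$ and $x\Cap 1=x$.

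The one place where a small idea is needed is (1). Starting from $x=x\Cap y$, one gets $x^*=(y\ra x)\ra y^*$; applying Lemma \ref{qbe-10}(6) and $(BE_4)$,
\[
y^*\ra x^*=y^*\ra\bigl((y\ra x)\ra y^*\bigr)=(y\ra x)\ra 1=1,
\]
so by Lemma \ref{qbe-10}(6) we have $x\ra y=1$. With $x\le y$ in hand, $y\Cap x=((x\ra y)\ra x^*)^*=(1\ra x^*)^*=x$ and $x\Cup y=(x\ra y)\ra y=1\ra y=y$, settling both halves of (1). For (2), reflexivity is $x\Cap x=(1\ra x^*)^*=x$; antisymmetry uses (1) twice: $x\le_Q y$ gives $x=y\Cap x$ and $y\le_Q x$ gives $y=y\Cap x$, so $x=y$. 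Finally (7): the computation $x\Cap t=((t\ra x)\ra t^*)^*$ shows that whenever $t\le x$ one has $x\Cap t=t^{**}=t$; applying this with $t=y\Cap x$ and $t=x\Cap y$, which lie under $x$ by (10), delivers both identities.

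The main obstacle is the step in (1) where the algebraic hypothesis $x=x\Cap y$ has to be converted into the order-theoretic statement $x\le y$; after that everything collapses to short manipulations using $(BE_4)$ and parts (3), (5), (6) of Lemma \ref{qbe-10}. I expect the final write-up to consist of ten short calculations in the order (3), (9), (8), (10), (4), (6), (5), (1), (2), (7).
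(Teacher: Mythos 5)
Your proposal is mathematically sound, and there is nothing in the paper to compare it against: Proposition \ref{qbe-20} is quoted from \cite{Ciu78} without proof, so the only in-paper material available is Lemma \ref{qbe-10} and the definitions of $\Cap$, $\Cup$, $\le_Q$ --- which is exactly what you use. I checked each item: (3), (9), (8) are the direct rewritings you describe; the reduction of $(x\Cap y)\ra x$ and $(x\Cap y)\ra y$ to $(y\ra x)\ra(y\ra x)=1$ and $(y\ra x)\ra 1=1$ gives (10); (4), (6), (5) follow as you say; in (1) the computation $x^*=(y\ra x)\ra y^*$, hence $y^*\ra x^*=(y\ra x)\ra(y^*\ra y^*)=1$, correctly converts $x=x\Cap y$ into $x\ra y=1$ (in fact you could simply invoke your already-proved (4) here and shorten the step), after which $y\Cap x=x$ and $x\Cup y=y$ are immediate; and (2), (7) follow from (1) and (10) as claimed, with no circularity in your ordering. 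So the write-up can proceed exactly along the lines you sketch.
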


\begin{proposition} \label{qbe-20-10} Let $X$ be an involutive BE algebra. 
Then the following hold for all $x,y,z\in X$: \\
$(1)$ $x, y\le_Q z$ and $z\ra x=z\ra y$ imply $x=y$ \emph{(cancellation law)}; \\  
$(2)$ $x\le_Q y$ implies $(y\ra x)\odot y=x;$ \\ 
$(3)$ $x\ra (z\odot y^*)=((z\ra y)\odot x)^*$. 
\end{proposition}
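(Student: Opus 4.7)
The plan is to first establish the useful identity
\[ x \Cap y = (y\ra x)\odot y, \]
from which parts (1) and (2) are immediate. The identity follows by unfolding the definition of $\Cap$: we have $x\Cap y = ((x^*\ra y^*)\ra y^*)^*$, and Lemma \ref{qbe-10}(6) rewrites $x^*\ra y^* = y\ra x$, giving $x\Cap y = ((y\ra x)\ra y^*)^*$. The right side matches the definition $(y\ra x)\odot y = ((y\ra x)\ra y^*)^*$.

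For part (2), the hypothesis $x\le_Q y$ is by definition $x = x\Cap y$, so combining with the identity yields $x = (y\ra x)\odot y$. For part (1), apply (2) to both $x\le_Q z$ and $y\le_Q z$ to obtain $x = (z\ra x)\odot z$ and $y = (z\ra y)\odot z$; since $z\ra x = z\ra y$, the right-hand sides agree and $x=y$.

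For part (3), I would compute each side independently using involutivity and Lemma \ref{qbe-10}. On the left, $z\odot y^* = (z\ra y^{**})^* = (z\ra y)^*$, so $x\ra (z\odot y^*) = x\ra (z\ra y)^*$, which by Lemma \ref{qbe-10}(3) equals $(z\ra y)\ra x^*$. On the right, unfolding $\odot$ gives $((z\ra y)\odot x)^* = ((z\ra y)\ra x^*)^{**} = (z\ra y)\ra x^*$ by involutivity. Hence both sides coincide.

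There is no substantial obstacle here; the statements are essentially bookkeeping consequences of the definitions together with the symmetry and De Morgan-type identities collected in Lemma \ref{qbe-10}. The one small pleasantness worth highlighting is the identity $x\Cap y = (y\ra x)\odot y$, which encapsulates parts (1) and (2) uniformly and could be recorded for later reuse.
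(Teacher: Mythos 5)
Your proposal is correct and follows essentially the same computations as the paper: the identity $x\Cap y=((y\ra x)\ra y^*)^*=(y\ra x)\odot y$ is exactly what the paper unfolds in parts (1) and (2), and your verification of (3) by reducing both sides to $(z\ra y)\ra x^*$ is the same bookkeeping the paper performs via $x\ra y=(x\odot y^*)^*$. The only cosmetic difference is that you isolate the identity $x\Cap y=(y\ra x)\odot y$ once and deduce (1) from (2), whereas the paper repeats the unfolding inline for (1).
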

\begin{proof}
$(1)$ Since $x,y\le_Q z$ and $z\ra x=z\ra y$, we have: \\
$\hspace*{2.00cm}$ $x=x\Cap z=((x^*\ra z^*)\ra z^*)^*=((z\ra x)\ra z^*)^*$ \\
$\hspace*{2.30cm}$ $=((z\ra y)\ra z^*)^*=((y^*\ra z^*)\ra z^*)^*=y\Cap z=y$. \\
$(2)$ Since $x\le_Q y$, we have: \\
$\hspace*{2cm}$ $(y\ra x)\odot y=((y\ra x)\ra y^*)^*=((x^*\ra y^*)\ra y^*)^*=x\Cap y=x$. \\
$(3)$ Taking into consideration that $x\ra y=(x\odot y^*)^*$, we get: \\
$\hspace*{2cm}$ $x\ra (z\odot y^*)=(x\odot (z\odot y^*)^*)^*=(x\odot (z\ra y))^*$. 
\end{proof}

A \emph{(left-)quantum-Wajsberg algebra} (\emph{QW algebra, for short}) $(X,\ra,^*,1)$ is an involutive BE algebra $(X,\ra,^*,1)$ satisfying the following condition for all $x,y,z\in X$: \\
(QW) $x\ra ((x\Cap y)\Cap (z\Cap x))=(x\ra y)\Cap (x\ra z)$. 

\noindent
Condition (QW) is equivalent to the following conditions: \\
($QW_1$) $x\ra (x\Cap y)=x\ra y;$ \\ 
($QW_2$) $x\ra (y\Cap (z\Cap x))=(x\ra y)\Cap (x\ra z)$. 

\begin{definition} \label{qmv-30} $\rm($\cite{Ior30}$\rm)$
\emph{      
A \emph{(left-)m-BE algebra} is an algebra $(X,\odot,^{*},1)$ of type $(2,1,0)$ satisfying the following properties, 
for all $x,y,z\in X$: \\ 
(PU) $1\odot x=x=x\odot 1;$ \\
(Pcomm) $x\odot y=y\odot x;$ \\
(Pass) $x\odot (y\odot z)=(x\odot y)\odot z;$ \\ 
(m-L) $x\odot 0=0;$ \\ 
(m-Re) $x\odot x^{*}=0$, \\
where $0:=1^*$. 
}\end{definition}
Note that, according to \cite[Cor. 17.1.3]{Ior35}, the involutive (left-)BE algebras $(X,\ra,^*,1)$ are definitionally equivalent to involutive (left-)m-BE algebras $(X,\odot,^*,1)$, by the mutually inverse transformations 
(\cite{Ior30, Ior35}): \\ 
$\hspace*{3cm}$ $\Phi:$\hspace*{0.2cm}$ x\odot y:=(x\ra y^*)^*$ $\hspace*{0.1cm}$ and  
                $\hspace*{0.1cm}$ $\Psi:$\hspace*{0.2cm}$ x\ra y:=(x\odot y^*)^*$. 
                
\begin{definition} \label{qmv-40} $\rm($\cite[Def. 3.10]{Ior34}$\rm)$
\emph{      
A \emph{(left-)quantum-MV algebra}, or a \emph{(left-)QMV algebra} for short, is an involutive (left-)m-BE algebra
$(X,\odot,^{*},1)$ verifying the following axiom: for all $x,y,z\in X$, \\
(Pqmv) $x\odot ((x^*\Cup y)\Cup (z\Cup x^*))=(x\odot y)\Cup (x\odot z)$. 
}
\end{definition}

\begin{proposition} \label{qmv-50}
The (left-)quantum-Wajsberg algebras are defitionally equivalent to (left-)quantum-MV algebras. 
\end{proposition}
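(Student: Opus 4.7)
The plan is to leverage the already-established definitional equivalence between involutive (left-)BE algebras and involutive (left-)m-BE algebras (via the mutually inverse transformations $\Phi$ and $\Psi$ stated right before the proposition), and then show that under this correspondence the extra axiom (QW) for quantum-Wajsberg algebras translates exactly into the extra axiom (Pqmv) for quantum-MV algebras. Since everything except these two axioms is already known to match up, this is all that needs to be verified.

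To transform (QW) into (Pqmv), I would start from (QW), namely
\[
x\ra ((x\Cap y)\Cap (z\Cap x))=(x\ra y)\Cap (x\ra z),
\]
and first replace $y$ by $y^*$ and $z$ by $z^*$ (valid by arbitrariness of $y,z$ in an involutive setting). I would then apply the involution $^*$ to both sides. The two crucial identities to use are: $(a\ra b)^{*}=a\odot b^{*}$ (immediate from $\Psi$, since $a\ra b=(a\odot b^*)^*$ and $^{**}=\Id$), and $(a\Cap b)^{*}=a^{*}\Cup b^{*}$, which is Proposition~\ref{qbe-20}(3). Applying the first identity on the left-hand side gives
\[
\bigl(x\ra ((x\Cap y^{*})\Cap (z^{*}\Cap x))\bigr)^{*}=x\odot \bigl((x\Cap y^{*})\Cap (z^{*}\Cap x)\bigr)^{*},
\]
and then applying the second identity (twice) rewrites this as $x\odot ((x^{*}\Cup y)\Cup (z\Cup x^{*}))$. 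On the right-hand side, the same two identities give $(x\ra y^{*})^{*}\Cup (x\ra z^{*})^{*}=(x\odot y)\Cup (x\odot z)$. So (QW) becomes exactly (Pqmv).

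For the reverse direction I would run the same calculation backward: start with (Pqmv), substitute $y\mapsto y^{*}$ and $z\mapsto z^{*}$, apply $^{*}$, and use the dual identities $(a\odot b)^{*}=a\ra b^{*}$ and $(a\Cup b)^{*}=a^{*}\Cap b^{*}$ (the latter being the other half of Proposition~\ref{qbe-20}(3)) to recover (QW). Since $\Phi$ and $\Psi$ are mutually inverse and both the BE/m-BE structural axioms and the involution are preserved, this shows a QW algebra under $\Phi$ yields a QMV algebra, and a QMV algebra under $\Psi$ yields a QW algebra, and the two passages are inverse to each other.

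The only delicate point—and really the sole place where one has to be careful—is to keep track of how $\Cap$ and $\Cup$ are actually defined in terms of $\ra$ versus in terms of $\odot$, and to confirm that the formula $(a\Cap b)^*=a^*\Cup b^*$ holds in both presentations so that the $^*$-dualization in each direction lands in the correct operation. Once this bookkeeping is fixed, the argument is a clean symbolic translation and no nontrivial computation beyond what is already recorded in Proposition~\ref{qbe-20} is needed.
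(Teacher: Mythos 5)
Your proposal is correct and follows essentially the same route as the paper: reduce the equivalence to checking that (QW) and (Pqmv) translate into one another under $\Phi$/$\Psi$, using the De Morgan identities of Proposition \ref{qbe-20}(3) together with the substitution $y\mapsto y^{*}$, $z\mapsto z^{*}$ and involutivity. The only difference is that you compute the direction (QW) $\Rightarrow$ (Pqmv) explicitly while the paper computes (Pqmv) $\Rightarrow$ (QW) and says ``similarly'' for the converse, which is immaterial.
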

\begin{proof}
We prove that the axioms $(Pqmv)$ and $(QW)$ are equivalent. 
Using the transformation $\Phi$, from (Pqmv) we get: \\
$\hspace*{0.5cm}$ $x\odot ((x^*\Cup y)\Cup (z\Cup x^*))=(x\ra ((x^*\Cup y)\Cup (z\Cup x^*))^*)^*
=(x\ra ((x\Cap y^*)\Cap (z^*\Cap x)))^*$ and \\
$\hspace*{0.5cm}$ $(x\odot y)\Cup (x\odot z)=(x\ra y^*)^*\Cup (x\ra z^*)^*=((x\ra y^*)\Cap (x\ra z^*))^*$, \\
hence (Pqmv) becomes: \\
$\hspace*{0.5cm}$ $(x\ra ((x\Cap y^*)\Cap (z^*\Cap x)))^*=((x\ra y^*)\Cap (x\ra z^*))^*$, \\
for all $x,y,z\in X$. Replacing $y$ by $y^*$ and $z$ by $z^*$, we get axiom (QW). 
Similarly axiom (QW) implies axiom (Pqmv). 
\end{proof}

In what follows, by quantum-MV algebras and quantum-Wajsberg algebras we understand the left-quantum-MV algebras and  left-quantum-Wajsberg algebras, respectively. 

\begin{proposition} \label{qbe-60} $\rm($\cite{Ciu78}$\rm)$ Let $(X,\ra,^*,1)$ be a quantum-Wajsberg algebra. 
The following hold for all $x,y\in X$:\\
$(1)$ $x\ra (y\Cap x)=x\ra y$ and $(x\ra y)\ra (y\Cap x)=x;$ \\
$(2)$ $x\le_Q x^*\ra y$ and $x\le_Q y\ra x;$ \\
$(3)$ $x\ra y=0$ iff $x=1$ and $y=0;$ \\
$(4)$ $(x\ra y)^*\Cap x=(x\ra y)^*;$ \\ 
$(5)$ $(x\Cap y)\Cap y=x\Cap y$ and $(x\Cup y)\Cup y=x\Cup y;$ \\
$(6)$ $x\Cup (y\Cap x)=x$ and $x\Cap (y\Cup x)=x;$ \\
$(7)$ $x\Cap y\le_Q y\le_Q x\Cup y;$ \\
$(8)$ $(x\Cup y)\ra x=(y\Cup x)\ra x=y\ra x;$ \\
$(9)$ $(x\Cup y)\ra y=(y\Cup x)\ra y=x\ra y;$ \\
$(10)$ $x\le y$ iff $y\Cap x=x$.      
\end{proposition}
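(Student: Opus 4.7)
The plan is to derive each of the ten identities by direct equational manipulation from (QW), its reformulations ($QW_1$) and ($QW_2$), and the involutive BE-algebra identities already collected in Lemma \ref{qbe-10} and Propositions \ref{qbe-20}--\ref{qbe-20-10}. The uniform method throughout is: unfold $\Cap$, $\Cup$, $\odot$ in terms of $\ra$ and $^*$; use $x^*\ra y^*=y\ra x$ and $x\ra y^*=y\ra x^*$ to move stars across implications; and then apply (QW) at the appropriate spot. Since $\Cap$ and $\Cup$ need not be commutative in a quantum-Wajsberg algebra, the bookkeeping of operand order has to be watched carefully.

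For the first identity of (1), I would specialise ($QW_2$) at $y:=x$ to obtain $x\ra(x\Cap(z\Cap x))=(x\ra x)\Cap(x\ra z)=x\ra z$, and then cancel the outer $x\Cap$ via Proposition \ref{qbe-20}(7). For (2), the two estimates $x=x\Cap(x^*\ra y)$ and $x=x\Cap(y\ra x)$ reduce, after rewriting the nested implications by Lemma \ref{qbe-10}(8), to instances of ($QW_1$). For (4), one uses Lemma \ref{qbe-10} to rewrite $(x\ra y)^*\Cap x$ in a form where a $0$-substitution in (QW) applies cleanly, with Proposition \ref{qbe-20}(6) collapsing the inner factor. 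The second identity of (1) then follows from (4): unfolding $y\Cap x$ via Lemma \ref{qbe-10} gives $y\Cap x=(x\ra y)\odot x$, so $(x\ra y)\ra(y\Cap x)=x\Cup(x\ra y)^*$, which collapses to $x$ once $(x\ra y)^*\le_Q x$ is in hand. Part (3) is then immediate from (4): $x\ra y=0$ forces $1\Cap x=1$, so $x=1$ by Proposition \ref{qbe-20}(6), whence $y=1\ra y=0$ by $(BE_3)$.

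Parts (5)--(10) are short corollaries of the machinery above. (5) follows from Proposition \ref{qbe-20}(7) together with its $^*$-dual via Proposition \ref{qbe-20}(3); (6) is an absorption law that drops out of the first identity of (1) combined with the $\Cap/\Cup$ duality; (7) upgrades Proposition \ref{qbe-20}(10) from $\le$ to $\le_Q$ by using the first identity of (1) to compute $y\Cap(x\Cap y)=x\Cap y$; (8) and (9) follow by simplifying $(x\Cup y)\ra x$ and $(x\Cup y)\ra y$ through Proposition \ref{qbe-20}(8)--(9) together with $1\ra a=a$; finally, (10) rewrites $x\le y$ as $x\ra y=1$, applies ($QW_1$) to get $x\ra(x\Cap y)=1$, and concludes by the cancellation law of Proposition \ref{qbe-20-10}(1).

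The main obstacle is the pair consisting of (4) and the second identity of (1), because they are the Galois/absorption laws that unlock (3), (6), (7), and (10). Their proofs require the most careful juggling of stars and operand order in the absence of commutativity of $\Cap$ and $\Cup$; once they are established, the remaining items reduce to routine bookkeeping inside the involutive BE framework.
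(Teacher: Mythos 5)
First, note that the paper does not prove this proposition at all: it is recalled from \cite{Ciu78}, so your outline has to be judged on its own merits. Its opening moves are sound: specializing $(QW_2)$ at $y:=x$ and absorbing with Proposition \ref{qbe-20}(7) does give $x\ra(y\Cap x)=x\ra y$, and your deduction of (3) from (4) is correct. But the two items you yourself single out as the crux, (4) and the second identity of (1), are exactly where the argument breaks. A $0$-substitution in (QW) does not yield (4): putting $y:=0$ (resp.\ $z:=0$) and using Proposition \ref{qbe-20}(6) gives $x^*\Cap(x\ra z)=x^*$ (resp.\ $(x\ra y)\Cap x^*=x^*$), i.e.\ $x^*\le_Q x\ra z$, and converting this into $(x\ra y)^*\le_Q x$ requires the $\le_Q$-antitonicity of $^*$, which is Proposition \ref{qbe-70}(2) --- a QW-specific fact outside the toolkit you declared (Lemma \ref{qbe-10}, Propositions \ref{qbe-20}--\ref{qbe-20-10}, (QW)). Worse, your final step for (1)-second, ``$x\Cup(x\ra y)^*$ collapses to $x$ once $(x\ra y)^*\le_Q x$ is in hand,'' is not justified by Proposition \ref{qbe-20}(1): that gives $(x\ra y)^*\Cup x=x$, the opposite order, and since $\Cup$ is not commutative the absorption you actually need, $a\le_Q b\Rightarrow b\Cup a=b$, is Proposition \ref{qbe-70}(1), which is itself interderivable with (1)-second (given (1)-first), so invoking it here is circular unless you first prove it directly from (QW). Since in your plan (2), (6), (7) and (10) all hang on (1)-second, this is a genuine gap, not a bookkeeping slip.

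There are also several misattributions caused by exactly the operand-order issue you warned about. (2) does not reduce to $(QW_1)$; unfolding $x\Cap(y\ra x)$ shows it is equivalent to the starred instance of (1)-second, whereas the clean route is the $y:=0$ substitution above: $x^*\le_Q x\ra z$ gives $x\le_Q x^*\ra z$, and $z:=y^*$ with $x^*\ra y^*=y\ra x$ gives $x\le_Q y\ra x$. (5) does not follow from Proposition \ref{qbe-20}(7): that lemma gives $y\Cap(x\Cap y)=x\Cap y$, while (5) asserts $(x\Cap y)\Cap y=x\Cap y$, which needs (9) via $(x^*\Cup y^*)\ra y^*=x^*\ra y^*$. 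In (7), the definition of $\le_Q$ requires $(x\Cap y)\Cap y=x\Cap y$ and $y\Cap(x\Cup y)=y$, i.e.\ (5) and (6), not the computation you cite. (8)--(9) do not come from Proposition \ref{qbe-20}(8)--(9); they come from $(QW_1)$ and (1)-first via Lemma \ref{qbe-10}(5), e.g.\ $(x\Cup y)\ra x=x^*\ra(x^*\Cap y^*)=x^*\ra y^*=y\ra x$. In (10) you must use $x\ra(y\Cap x)=1$ (from (1)-first), not $x\ra(x\Cap y)=1$: cancellation applied to $x\Cap y$ would head toward $x\le_Q y$, which fails in proper QW algebras. A workable repair of your scheme is: get (2) from (QW) with $y:=0$; get (4) from (1)-first via $(x\ra y)^*\Cap x=((x\ra y)\Cup x^*)^*$ and $(x\ra y)\Cup x^*=x\ra(y\Cap x)=x\ra y$; but you still owe an independent derivation of (1)-second (equivalently of Proposition \ref{qbe-70}(1)) from (QW) before (6), (7) and (10) can be closed.
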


\begin{proposition} \label{qbe-70} $\rm($\cite{Ciu78}$\rm)$ Let $(X,\ra,^*,1)$ be a quantum-Wajsberg algebra. 
If $x,y\in X$ such that $x\le_Q y$, then the following hold for any $z\in X$:\\
$(1)$ $y=y\Cup x;$ \\
$(2)$ $y^*\le_Q x^*;$ \\
$(3)$ $y\ra z\le_Q x\ra z$ and $z\ra x\le_Q z\ra y;$ \\
$(4)$ $x\Cap z\le_Q y\Cap z$ and $x\Cup z\le_Q y\Cup z$. 
\end{proposition}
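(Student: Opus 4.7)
The plan is to derive the five assertions in cascade, so that most of the work goes through a single tool: Proposition \ref{qbe-60}(2), which gives $a\le_Q b\ra a$ for all $a,b$. For (1), I would invoke Proposition \ref{qbe-60}(6), namely $y\Cup(x\Cap y)=y$, and note that the hypothesis $x\le_Q y$ means $x=x\Cap y$ by definition, so $y=y\Cup x$ drops out. Part (2) is then a De Morgan exercise: Proposition \ref{qbe-20}(3) gives $y^*\Cap x^*=(y\Cup x)^*$, which by (1) equals $y^*$, hence $y^*\le_Q x^*$.

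For (3), the idea is to rewrite the larger side of each asserted inequality in the shape $(y\ra x)\ra(\text{smaller side})$ and let Proposition \ref{qbe-60}(2) finish the job. For $y\ra z\le_Q x\ra z$, the identity $x=x\Cap y$ combined with Proposition \ref{qbe-20}(8) gives $x\ra z=(x\Cap y)\ra z=(y\ra x)\ra(y\ra z)$, so Proposition \ref{qbe-60}(2) with $a=y\ra z$ and $b=y\ra x$ yields the claim. For $z\ra x\le_Q z\ra y$, I would use (1) to write $y=(y\ra x)\ra x$ and then apply $(BE_4)$ to obtain $z\ra y=(y\ra x)\ra(z\ra x)$; one more application of Proposition \ref{qbe-60}(2) delivers $z\ra x\le_Q z\ra y$.

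For (4), I would avoid any direct attack on monotonicity of the binary operations and bootstrap from (3) instead. Two successive applications of (3) turn $x\le_Q y$ into $y\ra z\le_Q x\ra z$ and then into $(x\ra z)\ra z\le_Q(y\ra z)\ra z$, which is exactly $x\Cup z\le_Q y\Cup z$. The $\Cap$-half then follows by dualising: (2) converts $x\le_Q y$ into $y^*\le_Q x^*$, the $\Cup$-case just proved produces $y^*\Cup z^*\le_Q x^*\Cup z^*$, one more use of (2) gives $(x^*\Cup z^*)^*\le_Q(y^*\Cup z^*)^*$, and the De Morgan identities in Proposition \ref{qbe-20}(3) rewrite this as $x\Cap z\le_Q y\Cap z$. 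The main obstacle, as I see it, is psychological rather than computational: one is tempted to tackle (4) directly from the defining formulas of $\Cap$ and $\Cup$, but since $\Cup$ need not be commutative in a quantum-Wajsberg algebra and the axiom (QW) is rigid about where the distinguished variable appears, such manipulations get tangled quickly. Keeping everything routed through Proposition \ref{qbe-60}(2) and $(BE_4)$ is what keeps the argument short.
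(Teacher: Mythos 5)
Your argument is correct, and it is worth noting that the paper itself offers no proof to compare against: Proposition \ref{qbe-70} is imported from \cite{Ciu78} as a known fact. Checking your proposal on its own terms, every step goes through with the material stated in the preliminaries. In $(1)$ you rightly avoid Proposition \ref{qbe-20}$(1)$, which only gives $y=x\Cup y$, and instead use $y\Cup(x\Cap y)=y$ from Proposition \ref{qbe-60}$(6)$ together with the definition $x=x\Cap y$; since $\Cup$ is not commutative in a QW algebra this distinction is essential, and you handle it correctly. Part $(2)$ is indeed immediate from $(1)$, involutivity and Proposition \ref{qbe-20}$(3)$. In $(3)$, the identities $x\ra z=(x\Cap y)\ra z=(y\ra x)\ra(y\ra z)$ (Proposition \ref{qbe-20}$(8)$) and $z\ra y=z\ra((y\ra x)\ra x)=(y\ra x)\ra(z\ra x)$ (from $(1)$ and $(BE_4)$) put both targets in the form $b\ra a$, so Proposition \ref{qbe-60}$(2)$ finishes each claim. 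For $(4)$, your bootstrapping is legitimate because $(3)$ has been established for arbitrary pairs in the relation $\le_Q$, so it may be applied a second time to the pair $y\ra z\le_Q x\ra z$ (with the third variable set to $z$) to get $x\Cup z\le_Q y\Cup z$, and the $\Cap$ half then follows by the antitone map $(\;)^*$ from $(2)$ and the De Morgan identity $u\Cap v=(u^*\Cup v^*)^*$. The only blemishes are cosmetic: you announce ``five assertions'' where the proposition has four items (six inequalities), and the phrase ``most of the work goes through Proposition \ref{qbe-60}$(2)$'' undersells the role of $(1)$ and of Proposition \ref{qbe-20}$(8)$; neither affects correctness.
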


\begin{proposition} \label{qbe-80} $\rm($\cite{Ciu78}$\rm)$ Let $(X,\ra,^*,1)$ be a quantum-Wajsberg algebra. 
The following hold, for all $x,y,z\in X$:\\
$(1)$ $(x\Cap y)\Cap (y\Cap z)=(x\Cap y)\Cap z;$ \\
$(2)$ $\le_Q$ is transitive; \\
$(3)$ $x\Cup y\le_Q x^*\ra y;$ \\
$(4)$ $(x^*\ra y)^*\ra (x\ra y^*)^*=x^*\ra y;$ \\
$(5)$ $(x\ra y)^*\ra (y\ra x)^*=x\ra y;$ \\
$(6)$ $(y\ra x)\ra (x\ra y)=x\ra y;$ \\
$(7)$ $(x\ra y)\Cup (y\ra x)=1;$ \\
$(8)$ $(z\Cap x)\ra (y\Cap x)=(z\Cap x)\ra y;$ \\
$(9)$ $(x\Cap y)\ra (y\Cap x)=1;$ \\
$(10)$ $(x\Cup y)\ra (y\Cup x)=1$.     
\end{proposition}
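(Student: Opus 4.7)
The plan is to prove the ten identities in an order that lets later parts lean on earlier ones, leveraging axiom (QW) (in its derived forms $(QW_1)$ and $(QW_2)$) together with the identities collected in Lemma~\ref{qbe-10} and Propositions~\ref{qbe-20}, \ref{qbe-20-10} and \ref{qbe-60}. I would first dispatch the ``formal'' identities (8), (9), (10); then handle (1) and deduce (2); then prove the core identity (6); and finally derive (3), (4), (5) and (7).

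For (8) and (9), apply Proposition~\ref{qbe-20}(8), $(x\Cap y)\ra z=(y\ra x)\ra(y\ra z)$. For (8), both $(z\Cap x)\ra(y\Cap x)$ and $(z\Cap x)\ra y$ rewrite as $(x\ra z)\ra(x\ra(y\Cap x))$ and $(x\ra z)\ra(x\ra y)$, which agree by Proposition~\ref{qbe-60}(1). For (9), the same formula combined with $(QW_1)$ gives $(x\Cap y)\ra(y\Cap x)=(y\ra x)\ra(y\ra(y\Cap x))=(y\ra x)\ra(y\ra x)=1$. Then (10) is the De~Morgan dual of (9): by Lemma~\ref{qbe-10}(6), $(x\Cup y)\ra(y\Cup x)=(y^*\Cap x^*)\ra(x^*\Cap y^*)$, to which (9) applies.

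For (1), I apply $(x\Cap y)\ra$ to both candidate sides. By $(QW_1)$, $(x\Cap y)\ra\bigl((x\Cap y)\Cap(y\Cap z)\bigr)=(x\Cap y)\ra(y\Cap z)$ and $(x\Cap y)\ra\bigl((x\Cap y)\Cap z\bigr)=(x\Cap y)\ra z$; and these two are equal since Proposition~\ref{qbe-20}(8) together with $(QW_1)$ reduces both to $(y\ra x)\ra(y\ra z)$. The remaining step is to upgrade this equality of $\ra$-images to an equality of the elements themselves, for which I would exhibit a suitable common $\le_Q$-upper bound and invoke the cancellation law of Proposition~\ref{qbe-20-10}(1). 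Given (1), Part (2) is immediate: if $x\le_Q y\le_Q z$, then $x=x\Cap y$ and $y=y\Cap z$, so
\[
x\Cap z=(x\Cap y)\Cap z=(x\Cap y)\Cap(y\Cap z)=(x\Cap y)\Cap y=x\Cap y=x,
\]
the fourth equality being Proposition~\ref{qbe-60}(5).

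The main obstacle is (6): $(y\ra x)\ra(x\ra y)=x\ra y$. By $(BE_4)$ this is equivalent to $x\ra((y\ra x)\ra y)=x\ra y$. From Proposition~\ref{qbe-60}(2) we have $x\le_Q y\ra x$, and then Proposition~\ref{qbe-70}(3) yields $(y\ra x)\ra y\le_Q x\ra y$; combining this with the cancellation law of Proposition~\ref{qbe-20-10}(1) (or with a suitable instance of $(QW_2)$) should close the argument. Once (6) is secured, the remaining parts follow quickly. For (5), Lemma~\ref{qbe-10}(6) gives $(x\ra y)^*\ra(y\ra x)^*=(y\ra x)\ra(x\ra y)=x\ra y$. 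For (4), by Lemma~\ref{qbe-10}(6), $(x^*\ra y)^*\ra(x\ra y^*)^*=(x\ra y^*)\ra(x^*\ra y)$, which rewrites as $(x\ra y^*)\ra(y^*\ra x)$ by Lemma~\ref{qbe-10}(5) and equals $y^*\ra x=x^*\ra y$ by (6) (applied with $y^*$ and $x$ in place of $y$ and $x$). For (7), $(x\ra y)\Cup(y\ra x)=((x\ra y)\ra(y\ra x))\ra(y\ra x)=(y\ra x)\ra(y\ra x)=1$, using (6) with $x$ and $y$ swapped. For (3), I first check $x\Cup y\le x^*\ra y$ via the chain $(x\Cup y)\ra(x^*\ra y)=x^*\ra((x\Cup y)\ra y)=x^*\ra(x\ra y)=x\ra(x^*\ra y)=x\ra(y^*\ra x)=1$, invoking $(BE_4)$, Proposition~\ref{qbe-60}(9), Lemma~\ref{qbe-10}(5) and Lemma~\ref{qbe-10}(1); then upgrade to $\le_Q$ by computing $(x\Cup y)\Cap(x^*\ra y)$ with the help of (1) and (8).
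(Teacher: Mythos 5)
First, note that the paper itself does not prove Proposition \ref{qbe-80}: it is imported from \cite{Ciu78}, so there is no in-paper argument to compare yours against; I can only assess your sketch on its own terms. The easy parts are fine: (8), (9), (10) via Proposition \ref{qbe-20}(8), $(QW_1)$, Proposition \ref{qbe-60}(1) and Lemma \ref{qbe-10}(6) are correct, as are the reductions (2) from (1), and (4), (5), (7) from (6). The problem is that the three load-bearing items — (1), (6), and the $\le_Q$ in (3) — are exactly the places where you defer to "a suitable common $\le_Q$-upper bound", "should close the argument", and "upgrade to $\le_Q$", and in each case the obvious instantiation fails. For (1): your arrow computation is with respect to $w=x\Cap y$, so cancellation (Proposition \ref{qbe-20-10}(1)) would require $(x\Cap y)\Cap z\le_Q x\Cap y$ and $(x\Cap y)\Cap(y\Cap z)\le_Q x\Cap y$; this is false in general, e.g.\ in Example \ref{fqbe-100} with $x=1$, $y=a$, $z=b$ one gets $(x\Cap y)\Cap z=a\Cap b=b$ and $b\Cap a=a\ne b$, so $b\nleq_Q a$. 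The alternative bound $w=z$ would need $(x\Cap y)\Cap(y\Cap z)\le_Q z$, i.e.\ transitivity of $\le_Q$, which is exactly part (2) that you derive from (1) — a circle. So (1) is a genuine gap, and (2) falls with it.

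For (6), the ingredients you name do not close it: from $(y\ra x)\ra y\le_Q x\ra y$ and Proposition \ref{qbe-70}(3) you only get $(y\ra x)\ra(x\ra y)\le_Q x\ra(x\ra y)$, and $x\ra(x\ra y)$ is not $x\ra y$ in these (non-idempotent) algebras; cancellation again lacks a common $\le_Q$-bound with equal images. A complete argument does exist with the tools you allow yourself: by Proposition \ref{qbe-60}(8),(9), $(y\ra x)\ra(x\ra y)=((y\Cup x)\ra x)\ra((y\Cup x)\ra y)$, which by Proposition \ref{qbe-20}(8) equals $(x\Cap(y\Cup x))\ra y$, and $x\Cap(y\Cup x)=x$ by Proposition \ref{qbe-60}(6), giving $x\ra y$; with that, your (4), (5), (7) go through. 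For (3), showing $x\Cup y\le x^*\ra y$ is not enough: $\le$ does not imply $\le_Q$ (only the converse, Proposition \ref{qbe-20}(4)), and Proposition \ref{qbe-60}(10) converts your inequality into $(x^*\ra y)\Cap(x\Cup y)=x\Cup y$, which is the wrong-sided identity since $\Cap$ is not commutative; your proposed "computation with (1) and (8)" is not given. A correct route avoids $\le$ altogether: $x\le_Q x^*\ra y$ and $y\le_Q x^*\ra y$ by Proposition \ref{qbe-60}(2), so Proposition \ref{qbe-70}(4) gives $x\Cup y\le_Q(x^*\ra y)\Cup y$, and Proposition \ref{qbe-70}(1) gives $(x^*\ra y)\Cup y=x^*\ra y$. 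In summary, the skeleton is reasonable, but as written parts (1)--(3) (and hence (2)) are not proved, and (6) needs a different closing argument such as the one above.
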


\noindent
By Propositions \ref{qbe-20}$(2)$, \ref{qbe-80}$(2)$, in a quantum-Wajsberg algebra $X$, $\le_Q$ is a partial order on $X$. 

\begin{proposition} \label{qbe-100} Let $(X,\ra,^*,1)$ be a quantum-Wajsberg algebra. 
Then the following hold, for all $x,y\in X$:\\
$(1)$ $(x\Cap y)^*\ra (y\Cap x)^*=1$ and $(x\Cap y)\ra (y\Cap x)=1;$ \\
$(2)$ $(x\Cup y)^*\ra (y\Cup x)^*=1$ and $(x\Cup y)\ra (y\Cup x)=1;$ \\ 
$(3)$ $x\Cap y=0$ iff $y\Cap x=0;$ \\
$(4)$ $x\Cup y=1$ iff $y\Cup x=1$. 
\end{proposition}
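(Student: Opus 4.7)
The plan is to observe that both halves of (1) and of (2) are already essentially contained in Proposition \ref{qbe-80}(9)--(10), once we use the involutive identity $a^*\ra b^*=b\ra a$ from Lemma \ref{qbe-10}(6). Then (3) and (4) follow as easy consequences using $(BE_3)$ and involutivity.

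For (1), the second equation $(x\Cap y)\ra (y\Cap x)=1$ is exactly Proposition \ref{qbe-80}(9). For the first equation, apply Lemma \ref{qbe-10}(6) to rewrite
\[
(x\Cap y)^*\ra (y\Cap x)^*=(y\Cap x)\ra (x\Cap y),
\]
and this equals $1$ by Proposition \ref{qbe-80}(9) with the roles of $x$ and $y$ swapped. Part (2) is handled in exactly the same way, with Proposition \ref{qbe-80}(10) replacing \ref{qbe-80}(9): $(x\Cup y)\ra (y\Cup x)=1$ is \ref{qbe-80}(10), and $(x\Cup y)^*\ra (y\Cup x)^*=(y\Cup x)\ra (x\Cup y)=1$ again by \ref{qbe-80}(10) with $x,y$ exchanged.

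For (3), suppose $x\Cap y=0$, so $(x\Cap y)^*=1$. By part (1), $(x\Cap y)^*\ra (y\Cap x)^*=1$, i.e.\ $1\ra (y\Cap x)^*=1$. Using $(BE_3)$, the left-hand side equals $(y\Cap x)^*$, whence $(y\Cap x)^*=1$ and involutivity gives $y\Cap x=0$. The converse direction is symmetric (exchange $x$ and $y$). Part (4) is obtained by the same argument: if $x\Cup y=1$, then by (2) we get $1\ra (y\Cup x)=1$, and $(BE_3)$ yields $y\Cup x=1$; the converse follows by symmetry.

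There is no real obstacle here; the only step worth naming is the use of Lemma \ref{qbe-10}(6) to turn the starred implications in (1) and (2) into the unstarred forms that are already listed in Proposition \ref{qbe-80}. Everything else is a direct substitution.
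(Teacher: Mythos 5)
Your proof is correct, but it takes a different (and shorter) route than the paper. You treat the unstarred identities $(x\Cap y)\ra(y\Cap x)=1$ and $(x\Cup y)\ra(y\Cup x)=1$ as already known, citing Proposition \ref{qbe-80}(9),(10) (which are indeed recalled from the earlier paper), and then obtain the starred versions purely formally via the involutive identity $a^*\ra b^*=b\ra a$ of Lemma \ref{qbe-10}(6), with $x$ and $y$ exchanged. The paper instead works in the opposite direction: it proves the starred identity of (1) from scratch by a direct computation using $(QW_1)$, $(BE_4)$ and the definition of $\Cap$ (so in effect it reproves \ref{qbe-80}(9) rather than quoting it), then uses the same involutive identity to pass to the unstarred form, and obtains (2) from (1) by the substitution $x\mapsto x^*$, $y\mapsto y^*$ rather than by a second citation. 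Your version buys brevity and makes transparent that (1) and (2) are just reformulations of the recalled facts; the paper's version buys a self-contained derivation from the QW axiom that does not lean on the imported proposition. Parts (3) and (4) are handled essentially identically in both arguments (your appeal to $(BE_3)$ together with $0=1^*$ and involutivity is exactly what the paper does), so there is no gap.
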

\begin{proof}
$(1)$ Using $(QW_1)$ we have: \\
$\hspace*{1.00cm}$ $(x\Cap y)^*\ra (y\Cap x)^*=(x\Cap y)^*\ra ((y^*\ra x^*)\ra x^*)
                    =(y^*\ra x^*)\ra ((x\Cap y)^*\ra x^*)$ \\
$\hspace*{4.30cm}$ $=(x\ra y)\ra (x\ra (x\Cap y))=(x\ra y)\ra (x\ra y)=1$. \\
It implies $(y\Cap x)\ra (x\Cap y)=1$, and changing $x$ and $y$ we get the second identity. \\
$(2)$ It follows from $(1)$ replacing $x$ by $x^*$ and $y$ by $y^*$. \\
$(3)$ If $x\Cap y=0$, then by $(1)$, $1\ra (y\Cap x)^*=1$, so that $(y\Cap x)^*=1$, that is $y\Cap x=0$. 
Conversely, if $y\Cap x=0$, by the second identity of $(1)$ we get $(x\Cap y)^*=1$, hence $x\Cap y=0$. \\
$(4)$ If $x\Cup y=1$, using the second identity of $(2)$ we get $y\Cup x=1$. 
Conversely, if $y\Cup x=1$, by the first identity of $(2)$ we get $x\Cup y=1$. 
\end{proof}

\begin{proposition} \label{qbe-120} In any quantum-Wajsberg algebra $(X,\ra,^*,1)$ the following hold 
for all $x,y,z\in X$: \\
$(1)$ $x\ra (y\ra z)=(x\odot y)\ra z;$ \\
$(2)$ $x\le_Q y\ra z$ implies $x\odot y\le z;$ \\
$(3)$ $x\odot y\le z$ implies $x\le y\ra z;$ \\
$(4)$ $(x\ra y)\odot x\le y;$ \\
$(5)$ $x\le_Q y$ implies $x\odot z\le_Q y\odot z$. 
\end{proposition}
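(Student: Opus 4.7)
The plan is to treat the five items in order, using the definition $x\odot y=(x\ra y^*)^*$ together with the basic identities from Lemma \ref{qbe-10} and the monotonicity properties of $\le_Q$ from Proposition \ref{qbe-70}. Items (1)--(4) should be short, while (5) will require a little more care.

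For (1), I would simply apply Lemma \ref{qbe-10}(8), which gives $x\ra(y\ra z)=(x\ra y^*)^*\ra z$, and then recognize $(x\ra y^*)^*=x\odot y$. Items (2) and (3) follow at once from (1): if $x\le_Q y\ra z$, then by Proposition \ref{qbe-60}(2)--(4) (or by \ref{qbe-20}(4)) we get $x\le y\ra z$, i.e.\ $x\ra(y\ra z)=1$, so (1) yields $(x\odot y)\ra z=1$; conversely, $x\odot y\le z$ rewrites via (1) as $x\ra(y\ra z)=1$, that is $x\le y\ra z$. For (4), I would invoke (1) in the form $((x\ra y)\odot x)\ra y=(x\ra y)\ra(x\ra y)=1$.

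The substantive step is (5). First I note that $(x\odot z)^{*}=x\ra z^{*}=z\ra x^{*}$ by Lemma \ref{qbe-10}(3), and likewise $(y\odot z)^{*}=z\ra y^{*}$. Next I would observe that Proposition \ref{qbe-70}(2) together with involutivity gives the equivalence $u\le_Q v\Longleftrightarrow v^{*}\le_Q u^{*}$: the forward direction is the statement itself, and the backward direction follows by applying it once more to $v^{*}\le_Q u^{*}$ and using $u^{**}=u$. Using this twice, the desired $x\odot z\le_Q y\odot z$ is equivalent to $(y\odot z)^{*}\le_Q(x\odot z)^{*}$, i.e.\ to $z\ra y^{*}\le_Q z\ra x^{*}$.

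Finally, from $x\le_Q y$ and Proposition \ref{qbe-70}(2) we obtain $y^{*}\le_Q x^{*}$, and the second half of Proposition \ref{qbe-70}(3) (monotonicity in the right argument of $\ra$) then yields $z\ra y^{*}\le_Q z\ra x^{*}$, which closes the argument. The one point I expect to need a sentence of justification is the $\le_Q$-involution equivalence $u\le_Q v\Leftrightarrow v^{*}\le_Q u^{*}$; everything else is direct unfolding of the definition of $\odot$ and application of already-established implications.
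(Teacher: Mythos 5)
Your proposal is correct and takes essentially the same approach as the paper: items (1)--(4) are handled identically (the paper derives (4) from (2) via reflexivity of $\le_Q$ rather than directly from (1), a trivial difference), and (5) rests on the same two facts, Proposition \ref{qbe-70}(2),(3). The only cosmetic deviation is in (5): the paper applies antitonicity in the left argument to get $y\ra z^*\le_Q x\ra z^*$ and then negates once, whereas you pass to complements first and use right-monotonicity together with the equivalence $u\le_Q v\Leftrightarrow v^*\le_Q u^*$ (which you correctly justify from \ref{qbe-70}(2) and involutivity); both routes are sound.
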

\begin{proof}
$(1)$ Using Lemma \ref{qbe-10}, we have: \\
$\hspace*{1cm}$ $x\ra (y\ra z)=x\ra (z^*\ra x^*)=z^*\ra (x\ra y^*)=(x\ra y^*)^*\ra z=(x\odot y)\ra z$. \\
$(2)$ $x\le_Q y\ra z$ implies $x\le y\ra z$, so that $x\ra (y\ra z)=1$. 
Hence $(x\odot y)\ra z=1$, that is $x\odot y\le z$. \\
$(3)$ From $x\odot y\le z$ we have $(x\odot y)\ra z=1$, that is $(x\ra y^*)^*\ra z=1$. 
It follows that $z^*\ra (x\ra y^*)=1$, so that $x\ra (z^*\ra y^*)=1$, hence $x\ra (y\ra z)=1$ and so 
$x\le y\ra z$. \\
$(4)$ Since $x\ra y\le_Q x\ra y$, by $(2)$ we get $(x\ra y)\odot x\le y$. \\
$(5)$ From $x\le_Q y$ we get $y\ra z^*\le_Q x\ra z^*$ and $(x\ra z^*)^*\le_Q (y\ra z^*)^*$, that is 
$x\odot z\le_Q y\odot z$. 
\end{proof}

A quantum-Wajsberg algebra $X$ is called \emph{commutative} if $x\Cup y=y\Cup x$, for all $x,y\in X$. 
It was proved in \cite{Ciu78} that any Wajsberg algebra is a quantum-Wajsberg algebra, and a 
quantum-Wajsberg algebra is a Wajsberg algebra if and only if the relations $\le$ and $\le_Q$ coincide. 

\begin{remark} \label{qbe-130}
Since: \\
- commutative BE algebras are commutative BCK algebras (\cite{Walend1}]),  \\
- bounded commutative BCK are definitionally equivalent to MV algebras (\cite{Mund1}) and \\
- Wajsberg algebras are definitionally equivalent to MV algebras (\cite{Font1}), \\
it follows that bounded commutative BE algebras are bounded commutative BCK algebras, hence are definitionally equivalent with MV algebras, hence with Wajsberg algebras. 
Hence the commutative quantum-Wajsberg algebras are the Wajsberg algebras. 
\end{remark} 

$\vspace*{5mm}$

\section{Deductive systems in quantum-Wajsberg algebras}

The ideals in right-QMV algebras were defined by R. Giuntini and S. Pulmannov\'a in \cite{Giunt7} 
(see also \cite{DvPu, DvPu1}). They also introduced the notion of perspective elements in QMV algebras. 
Using properties of these elements, the authors studied the ideals of QMV algebras.  
In this section, we extend these notions to the case of QW algebras. 
We define the q-deductive systems, dual-perspective elements, p-deductive systems and deductive systems in QW 
algebras, and show that every deductive system is a p-deductive system. 
Following the ideas from the paper \cite{Bus1}, we prove that the q-deductive systems and p-deductive systems 
of the quantum-Wajsberg algebra $(X,\ra,^*,1)$ coincide with the q-filters and p-filters of the term equivalent quantum-MV algebra $(X,\odot,^*,1)$.  
We also define the maximal and strongly maximal q-deductive system, and prove that any strongly maximal 
q-deductive system is maximal. 
If every q-deductive system is a deductive system, we show that the notions of maximal and strongly maximal 
q-deductive systems coincide. \\
In what follows, $(X,\ra,^*,1)$ will be a quantum-Wajsberg algebra, unless otherwise stated. 

\begin{definition} \label{fqbe-10}
\emph{
We say that the elements $x,y\in X$ are in \emph{dual-perspective}, denoted by 
$x\sim y$, if there exists $\alpha\in X$ such that $x\le \alpha\le x$ and $y\le \alpha\le y$. 
}
\end{definition}

\begin{lemma} \label{fqbe-20} Let $X$ be a QW algebra. The following hold for all $x,y\in X$: \\
$(1)$ the relation $\sim$ is reflexive and symmetric; \\ 
$(2)$ $x\sim 0$ implies $x=0$ and $x\sim 1$ implies $x=1;$ \\
$(3)$ $x\ra y=y\ra x=1$ implies $x\sim y;$ \\
$(4)$ $x\sim y$ iff $x^*\sim y^*;$ \\
$(5)$ $(x\Cap y)\sim (y\Cap x)$ and $(x\Cup y)\sim (y\Cup x)$. 
\end{lemma}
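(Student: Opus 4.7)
The plan is to verify each of the five parts by unpacking the definition of dual-perspective together with the involutive identities in Lemma \ref{qbe-10} and the swap identities collected in Proposition \ref{qbe-100}. None of the parts should present a serious obstacle; the only step that requires any care is the involution argument in (4).

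Parts (1)--(3) are essentially routine. For reflexivity take $\alpha:=x$, using $(BE_1)$; symmetry is immediate from the symmetry of the defining condition in $x$ and $y$. For (2), $x\sim 0$ supplies an $\alpha$ with $0\le\alpha\le 0$, and $\alpha\le 0$ forces $\alpha^*=1$, hence $\alpha=0$; then $x\le 0$ forces $x^*=1$, i.e., $x=0$. Dually, $x\sim 1$ supplies an $\alpha$ with $1\le\alpha\le 1$; by $(BE_3)$ the first inequality gives $\alpha=1$, and then $\alpha\le x$ gives $x=1$. For (3), the hypothesis $x\ra y=y\ra x=1$ means $x\le y$ and $y\le x$, so choosing $\alpha:=x$ immediately witnesses $x\sim y$ in combination with the reflexivity $x\le x$.

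For part (4) I would show that if $\alpha$ witnesses $x\sim y$, then $\beta:=\alpha^*$ witnesses $x^*\sim y^*$; the ``iff'' then follows by reapplying the implication to $x^*,y^*$ and using $x^{**}=x$, $y^{**}=y$. The inequality transcription rests on Lemma \ref{qbe-10}(3),(5): writing $\alpha=\beta^*$, the relation $x\ra\alpha=1$ rewrites as $\beta\ra x^*=1$, i.e., $\beta\le x^*$, while $\alpha\ra x=1$ rewrites as $x^*\ra\beta=1$, i.e., $x^*\le\beta$. Applied to both pairs of inequalities this converts $x\le\alpha\le x$ and $y\le\alpha\le y$ into $x^*\le\beta\le x^*$ and $y^*\le\beta\le y^*$, as required. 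Finally, part (5) follows at once from (3) together with Proposition \ref{qbe-100}(1)--(2), which supply $(x\Cap y)\ra(y\Cap x)=(y\Cap x)\ra(x\Cap y)=1$ and the analogous pair of identities for $\Cup$.
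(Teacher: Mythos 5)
Your proposal is correct and follows essentially the same route as the paper: the same witness choices (a trivial witness for (2)–(3), $\alpha^*$ for (4), and the identities of Proposition \ref{qbe-100} combined with part (3) for (5)), with your use of Lemma \ref{qbe-10}(3),(5) in (4) just making explicit the antitonicity of $^*$ that the paper invokes implicitly.
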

\begin{proof}
$(2)$ If $x\sim 0$, then there exists $\alpha\in X$ such that $x\ra \alpha=\alpha\ra x=1$ and 
$0\ra \alpha=\alpha\ra 0=1$. It follows that $\alpha=0$, so that $x=0$. 
Similarly, if $x\sim 1$, then there exists $\alpha\in X$ such that $x\ra \alpha=\alpha\ra x=1$ and 
$1\ra \alpha=\alpha\ra 1=1$. We get $\alpha=1$ and $x=1$. \\
$(3)$ Taking $\alpha=y$, we have $x\le \alpha\le x$ and $y\le \alpha\le y$, that is $x\sim y$. \\
$(4)$ We have $x\sim y$, there exists $\alpha\in X$ such that $x\le \alpha\le x$ and $y\le \alpha\le y$.  
Hence $x^*\le \alpha^*\le x^*$ and $y^*\le \alpha^*\le y^*$, that is $x^*\sim y^*$. 
The converse follows similarly. \\
$(5)$ By Proposition \ref{qbe-100}, $(x\Cap y)\ra (y\Cap x)=(y\Cap x)\ra (x\Cap y)=1$, and taking 
$\alpha:=y\Cap x$ we get $(x\Cap y)\sim (y\Cap x)$. 
Similarly, from $(x\Cup y)\ra (y\Cup x)=(y\Cup x)\ra (x\Cup y)=1$ we have $(x\Cup y)\sim (y\Cup x)$. 
\end{proof}

\begin{definition} \label{fqbe-20-10}  
\emph{
Let $(X,\odot,^*,1)$ be a QMV algebra. A nonempty subset $F$ of $X$ is called a \emph{q-filter} of $X$, if it 
satisfies the following conditions:\\
$(pf_1)$ $x,y\in F$ implies $x\odot y\in F;$ \\
$(pf_2)$ $x\in F$, $y\in X$ imply $x\oplus y\in X$. \\
A q-filter is called a \emph{p-filter} if it satisfies the following condition: \\
$(pf_3)$ $x\in F$, $y\in X$ imply $x\Cup y\in X$. 
}
\end{definition}

Note that the q-filters and p-filters are duals of the q-ideals and p-ideals defined in \cite{Giunt7} for the 
case of right-QMV algebras.  

\begin{definition} \label{fqbe-30} 
\emph{
A \emph{q-deductive system} of $X$ is a nonempty subset $F\subseteq X$ satisfying the folowing conditions: \\
$(F_1)$ $x,y\in F$ implies $(x\ra y^*)^*\in F$ ($x\odot y\in F$); \\
$(F_2)$ $x\in F$, $y\in X$ imply $y\ra x\in F$. 
}
\end{definition}

\begin{proposition} \label{fqbe-40} 
A nonempty subset $F\subseteq X$ is a q-deductive system of $X$ if and only if it satisfies conditions $(F_1)$ and \\ 
$(F_2^{'})$ $x\in F$, $y\in X$, $x\le_Q y$ imply $y\in F$.   
\end{proposition}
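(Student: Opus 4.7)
The plan is to show the two conditions $(F_2)$ and $(F_2')$ are equivalent in the presence of $(F_1)$ (in fact, $(F_1)$ is not used at all here, only basic properties of $\le_Q$).

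For the direction $(F_2') \Rightarrow (F_2)$: I would start from $x \in F$ and an arbitrary $y \in X$, and invoke Proposition \ref{qbe-60}$(2)$, which gives $x \le_Q y \ra x$. Then $(F_2')$ immediately yields $y \ra x \in F$, which is exactly $(F_2)$. This direction is essentially one line.

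For the direction $(F_2) \Rightarrow (F_2')$, the main trick is to rewrite $y$ in a form that exhibits it as $z \ra x$ for some $z$, whenever $x \le_Q y$ and $x \in F$. Here I would use Proposition \ref{qbe-60}$(1)$, which says $(a \ra b) \ra (b \Cap a) = a$; setting $a := y$, $b := x$ this becomes $(y \ra x) \ra (x \Cap y) = y$. Since $x \le_Q y$ means $x = x \Cap y$ by definition of $\le_Q$, this simplifies to
\[
y = (y \ra x) \ra x.
\]
Now applying $(F_2)$ to the element $x \in F$ and the element $y \ra x \in X$ gives $(y \ra x) \ra x \in F$, that is, $y \in F$.

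I do not expect a real obstacle here; the only point requiring care is the correct application of Proposition \ref{qbe-60}$(1)$ (remembering the order of the operands in $x \Cap y$ versus $y \Cap x$) and the use of the defining equality $x = x \Cap y$ for $\le_Q$ rather than the weaker consequence $x \le y$ from Proposition \ref{qbe-20}$(4)$. Condition $(F_1)$ is not needed in either implication, so the proposition is really a restatement of $(F_2)$ in the more conceptual upward-closure form with respect to the quantum order $\le_Q$.
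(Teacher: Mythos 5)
Your proof is correct and follows essentially the same route as the paper: both treat the statement as the equivalence of $(F_2)$ and $(F_2')$, proving the easy direction via $x\le_Q y\ra x$ (Proposition \ref{qbe-60}$(2)$) and the other direction via the identity $y=(y\ra x)\ra x$. The only cosmetic difference is that the paper obtains this identity from Proposition \ref{qbe-70}$(1)$ (namely $x\le_Q y$ implies $y=y\Cup x$), whereas you derive it from Proposition \ref{qbe-60}$(1)$ together with the defining equality $x=x\Cap y$; both justifications are valid.
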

\begin{proof}
We show that conditions $(F_2)$ and $(F_2^{'})$ are equivalent. \\
Assume that $F$ satisfies condition $(F_2)$, and let $x\in F$ and $y\in X$ such that $x\le_Q y$. 
According to Proposition \ref{qbe-70}$(1)$ and by $(F_2)$, $y=y\Cup x=(y\ra x)\ra x\in F$, that is 
condition $(F_2^{'})$ is verified.  
Conversely, if $x\in F$ and $y\in X$, by Proposition \ref{qbe-60}$(2)$, $x\le_Q y\ra x$, and by $(F_2^{'})$ 
we get $y\ra x\in F$. Hence condition $(F_2)$ is satisfied. 
\end{proof}

\begin{definition} \label{fqbe-40-10} 
\emph{
A q-deductive system of $X$ is called a \emph{p-deductive system} if it satisfies the following condition: \\
$(F_3)$ $x\in F$ and $y\in X$ implies $x\Cup y\in F$. 
}
\end{definition}

Denote by $\mathcal{DS}_q$, $\mathcal{DS}_p$ the set of all q-deductive system and p-deductive system of $X$, respectively.

\begin{remark} \label{fqbe-40-20} 
The q-deductive systems and p-deductive systems in a quantum-Wajsberg algebra $(X,\ra,*,1)$ coincide with the 
q-filters and p-filters in the quantum-MV algebra $(X,\odot,^*,1)$. \\
Indeed, let $F\in \mathcal{DS}_p$. 
Obviously, $(F_1)$ and $(F_3)$ coincide with $(pf_1)$ and $(pf_3)$, respectively. 
Suppose that $X$ satisfies $(F_2)$, and let $x\in F$, $y\in X$. 
By $(F_2)$ we get $y^*\ra x\in F$, that is $x\oplus y\in F$, so that $(pf_2)$ is satisfied. 
Conversely, assume that $X$ verifies $(pf_2)$, and consider $x\in F$, $y\in X$. 
Since $X=\{x^*\mid x\in X\}$, there exists $y_1\in X$ such that $y=y_1^*$. 
By $(pf_2)$, $y_1\oplus x\in F$, so that $y\ra x=y_1^*\ra x\in F$. 
Hence $X$ satisfies $(F_2)$.  
\end{remark}

\begin{definition} \label{fqbe-50} 
\emph{
A \emph{deductive system} of $X$ is a subset $F\subseteq X$ satisfying the following conditions: \\
$(DS_1)$ $1\in F;$ \\
$(DS_2)$ $x, x\ra y\in F$ implies $y\in F$. 
}
\end{definition}

Denote by $\mathcal{DS}(X)$ the set of all deductive systems of $X$. 
We say that $F\in \mathcal{DS}(X)$ is \emph{proper} if $F\neq X$. 
Obviously $\{1\},X\in \mathcal{DS}(X)$. 

\begin{remark} \label{fqbe-60} 
$(1)$ Taking into consideration $(QW_1)$ and Proposition \ref{qbe-60}$(1)$, condition $(DS_2)$ is equivalent 
to each of the following conditions: \\
$(DS_2^{'})$ $x,x\ra (x\Cap y)\in F$ implies $y\in F;$ \\
$(DS_2^{''})$ $x,x\ra (y\Cap x)\in F$ implies $y\in F$. \\
$(2)$ Given $F\in \mathcal{DS}(X)$, by Proposition \ref{qbe-80}$(9)$,$(10)$ we get: 
$x\Cap y\in F$ iff $y\Cap x\in F$ and $x\Cup y\in F$ iff $y\Cup x\in F$. 
\end{remark}

\begin{proposition} \label{fqbe-70} $\mathcal{DS}(X)\subseteq \mathcal{DS}_q(X)$. 
\end{proposition}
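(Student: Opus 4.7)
The plan is to show that any $F \in \mathcal{DS}(X)$ satisfies conditions $(F_1)$ and $(F_2)$ from Definition \ref{fqbe-30}. Nonemptiness of $F$ is immediate from $(DS_1)$.

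The condition $(F_2)$ is a direct consequence of modus ponens. Given $x \in F$ and $y \in X$, Lemma \ref{qbe-10}(1) gives $x \ra (y \ra x) = 1$, which lies in $F$ by $(DS_1)$. Applying $(DS_2)$ to $x \in F$ and $x \ra (y \ra x) \in F$ yields $y \ra x \in F$.

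The main step is $(F_1)$: given $x, y \in F$, I need to derive $x \odot y = (x \ra y^*)^* \in F$. The idea is to iterate $(DS_2)$ twice, pivoting on the adjunction identity between $\odot$ and $\ra$. By Proposition \ref{qbe-120}(1),
\[
x \ra \bigl(y \ra (x \odot y)\bigr) \;=\; (x \odot y) \ra (x \odot y) \;=\; 1 \in F.
\]
Since $x \in F$, the first application of $(DS_2)$ delivers $y \ra (x \odot y) \in F$. Since $y \in F$, a second application of $(DS_2)$ yields $x \odot y \in F$, which is exactly $(F_1)$.

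I do not anticipate a genuine obstacle: the argument is the standard BCK/MV-style filter derivation, and the non-commutative nature of the quantum-Wajsberg setting is handled entirely by the adjointness identity in Proposition \ref{qbe-120}(1), which requires no lattice or commutativity hypothesis. The only subtlety is to pick the ``correct'' instance of the adjunction — setting $z = x \odot y$ so that the outer implication collapses to $1$ — after which everything reduces to two applications of modus ponens.
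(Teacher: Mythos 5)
Your proof is correct and follows essentially the same route as the paper: the key step for $(F_1)$ is the identical computation $x\ra(y\ra(x\odot y))=(x\odot y)\ra(x\odot y)=1$ followed by two applications of $(DS_2)$. The only (harmless) difference is that you verify $(F_2)$ directly via $x\ra(y\ra x)=1$, whereas the paper checks the equivalent condition $(F_2^{'})$ and invokes Proposition \ref{fqbe-40}.
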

\begin{proof}
Let $F\in \mathcal{DS}(X)$ and let $x,y\in F$. Since by $(DS_1)$, $1\in F$, it follows that $F$ is nonempty. 
By Lemma \ref{qbe-10}$(8)$ we have $x\ra (y\ra z)=(x\ra y^*)^*\ra z$, for all $x,y,z\in X$. 
Taking $z:=(x\ra y^*)^*$, we get $x\ra (y\ra (x\ra y^*)^*)=(x\ra y^*)^*\ra (x\ra y^*)^*=1\in F$. 
Since $x,y\in F$, by $(DS_2)$ we get $(x\ra y^*)^*\in F$, that is $(F_1)$. \\ 
Let $x,y\in F$ such that $x\le_Q y$. By Lemma \ref{qbe-20}$(4)$, we have $x\le y$, that is $x\ra y=1\in F$. 
Since $x\in F$, by $(DS_2)$ we get $y\in F$, hence condition $(F_2^{'})$ is also satisfied. 
By Proposition \ref{fqbe-40}, it follows that $F\in \mathcal{DS}_q(X)$, hence 
$\mathcal{DS}(X)\subseteq \mathcal{DS}_q(X)$.  
\end{proof}

\begin{proposition} \label{fqbe-80} Let $F$ be a subset $F\subseteq X$. The following are equivalent: \\
$(a)$ $F\in \mathcal{DS}(X);$ \\
$(b)$ $F$ is nonempty and it satisfies conditions $(F_1)$ and \\
$(F_4)$ $x\in F$, $y\in X$, $x\le y$ imply $y\in F;$ \\
$(c)$ $F$ is nonempty and it satisfies conditions $(F_1)$ and $(F_3);$ \\
$(d)$ $F\in \mathcal{DS}_q(X)$ satisfying condition \\ 
$(F_5)$ $x\in F$, $y\in X$, $x\sim y$ imply $y\in F$. 
\end{proposition}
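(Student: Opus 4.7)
The plan is to run the cycle $(a)\Rightarrow(b)\Rightarrow(c)\Rightarrow(a)$ and then close with $(a)\Leftrightarrow(d)$, using the equivalence of $(F_2)$ and $(F_2^{'})$ from Proposition \ref{fqbe-40} whenever convenient.

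For $(a)\Rightarrow(b)$, nonemptiness comes from $1\in F$, condition $(F_1)$ is exactly Proposition \ref{fqbe-70}, and $(F_4)$ follows because $x\le y$ means $x\ra y=1\in F$, after which $(DS_2)$ yields $y\in F$. For $(b)\Rightarrow(c)$, given any $x\in F$ and $y\in X$, Proposition \ref{qbe-20}$(10)$ gives $x\le x\Cup y$, so $(F_4)$ produces $x\Cup y\in F$. For $(c)\Rightarrow(a)$, picking $x\in F$ we get $x\Cup 1=(x\ra 1)\ra 1=1\in F$ by $(F_3)$, which handles $(DS_1)$; for $(DS_2)$, if $x,x\ra y\in F$ then $(F_1)$ produces $a:=x\odot(x\ra y)\in F$, Proposition \ref{qbe-120}$(4)$ gives $a\le y$, hence $a\Cup y=(a\ra y)\ra y=y$, and $(F_3)$ forces $y\in F$. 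For $(a)\Rightarrow(d)$, Proposition \ref{fqbe-70} supplies $F\in\mathcal{DS}_q(X)$, and $(F_5)$ is a two-step application of $(DS_2)$: if $x\sim y$ via some $\alpha$ then $x\ra\alpha=1\in F$ gives $\alpha\in F$, and then $\alpha\ra y=1\in F$ gives $y\in F$.

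The main obstacle is $(d)\Rightarrow(a)$. For $(DS_1)$, nonemptiness combined with $x\le_Q 1$ (Proposition \ref{qbe-20}$(5)$) and $(F_2^{'})$ places $1$ in $F$. For $(DS_2)$, the key identity is
\[
 x\odot(x\ra y)=(x\ra y)\odot x=\bigl((x\ra y)\ra x^{*}\bigr)^{*}=\bigl((y^{*}\ra x^{*})\ra x^{*}\bigr)^{*}=y\Cap x,
\]
using commutativity of $\odot$, Lemma \ref{qbe-10}$(6)$, and the definition of $\Cap$. Hence $(F_1)$ puts $y\Cap x$ in $F$. Proposition \ref{qbe-100}$(1)$ yields $(y\Cap x)\ra(x\Cap y)=(x\Cap y)\ra(y\Cap x)=1$, so by Lemma \ref{fqbe-20}$(3)$ we obtain $y\Cap x\sim x\Cap y$, and $(F_5)$ delivers $x\Cap y\in F$. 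Finally $x\Cap y\le_Q y$ by Proposition \ref{qbe-60}$(7)$, so $(F_2^{'})$ closes the argument with $y\in F$.
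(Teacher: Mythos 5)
Your proof is correct; all the individual steps check out against the paper's lemmas (in particular the identity $x\odot(x\ra y)=(x\ra y)\odot x=((y^{*}\ra x^{*})\ra x^{*})^{*}=y\Cap x$ is valid, using commutativity of $\odot$, Lemma \ref{qbe-10}$(6)$ and the definition of $\Cap$). The implications $(a)\Rightarrow(b)$, $(b)\Rightarrow(c)$ and $(a)\Rightarrow(d)$ are essentially the paper's arguments, and your $(c)\Rightarrow(a)$ simply merges the paper's two steps $(c)\Rightarrow(b)$ and $(b)\Rightarrow(a)$ (with an explicit derivation of $1\in F$ from $x\Cup 1=1$, where the paper just notes nonemptiness gives $1\in F$). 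The genuine divergence is the closing direction out of $(d)$: the paper proves $(d)\Rightarrow(c)$ by noting $x\le_Q y\Cup x$, so $(F_2^{'})$ gives $y\Cup x\in F$, and then $(y\Cup x)\sim(x\Cup y)$ together with $(F_5)$ yields $(F_3)$; you instead prove $(d)\Rightarrow(a)$ directly, producing $y\Cap x\in F$ from $(F_1)$ via the identity above, swapping to $x\Cap y$ by $(x\Cap y)\sim(y\Cap x)$ and $(F_5)$, and finishing with $x\Cap y\le_Q y$ and $(F_2^{'})$. Both routes hinge on the same mechanism --- using $(F_5)$ to commute the arguments of a noncommutative operation ($\Cup$ in the paper, $\Cap$ in yours) --- but your version gives a self-contained derivation of modus ponens from $(d)$ without passing through $(F_3)$, at the cost of a slightly longer computation, while the paper's choice is more economical because it reuses the already established chain $(c)\Rightarrow(b)\Rightarrow(a)$.
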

\begin{proof}
$(a)\Rightarrow (b)$ Since $F\in \mathcal{DS}(X)$, according to Proposition \ref{fqbe-70}, $F$ satisfies 
condition $(F_1)$. 
Let $x\in F$, $y\in X$ such that $x\le y$. 
It follows that $x\ra y=1\in F$, hence $y\in F$, so that condition $(F_4)$ is satisfied. \\
$(b)\Rightarrow (a)$ Since $F$ is nonempty, $1\in F$. 
Let $x\in F$, $y\in X$ such that $x\ra y\in F$. 
By $(F_1)$ we get $x\odot (x\ra y)\in F$, and by Proposition \ref{qbe-120}$(4)$ we have $x\odot (x\ra y)\le y$. 
Applying $(F_4)$ we get $y\in F$, hence $F\in \mathcal{DS}(X)$. \\ 
$(b)\Rightarrow (c)$ Since $x\ra (x\Cup y)=x\ra ((x\ra y)\ra y)=(x\ra y)\ra (x\ra y)=1$, we have 
$x\le x\Cup y$. By $(F_4)$ we get $x\Cup y\in F$, hence $F$ satisfies condition $(F_3)$. \\
$(c)\Rightarrow (b)$ Suppose that $x\in F$ implies $x\Cup y\in F$, and let $y\in X$ such that $x\le y$, 
that is $x\ra y=1$. Since $x\Cup y=(x\ra y)\ra y=y$ and $x\Cup y\in F$, we get $y\in F$, that is $(F_4)$. \\
$(a)\Rightarrow (d)$ By Proposition \ref{fqbe-70}, $F\in \mathcal{DS}_q(X)$. 
Let $x\in F$ and $y\in X$ such that $x\sim y$. 
It follows that there exists $\alpha\in X$ such $x\le \alpha\le x$ and $y\le \alpha\le y$. 
From $x\in F$ and $x\ra \alpha=1\in F$, by $(DS_2)$ we have $\alpha\in F$, while $\alpha\in F$ and 
$\alpha\ra y=1$ imply $y\in F$. Thus condition $(F_5)$ is verified. \\
$(d)\Rightarrow (c)$ Let $F\in \mathcal{DS}_q(X)$, that is $F$ satisfies $(F_1)$ and $(F_2^{'})$. 
Let $x\in F$ and $y\in X$. Since $x\le_Q y\Cup x$, using $(F_2^{'})$ we get $y\Cup x\in F$. 
By Lemma \ref{fqbe-20}, $(y\Cup x)\sim (x\Cup y)$, and applying $(F_5)$ we get $x\Cup y\in F$, so that $(F_3)$ 
is verified. 
\end{proof}

\begin{corollary} \label{fqbe-80-10} If $X$ is commutative, then $\mathcal{DS}(X)=\mathcal{DS}_q(X)$. 
\end{corollary}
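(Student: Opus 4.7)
The plan is to combine Proposition \ref{fqbe-70}, which already gives the inclusion $\mathcal{DS}(X) \subseteq \mathcal{DS}_q(X)$, with a proof of the reverse inclusion $\mathcal{DS}_q(X) \subseteq \mathcal{DS}(X)$ under the commutativity hypothesis. So the entire work reduces to showing that, when $X$ is commutative, every q-deductive system $F$ of $X$ is also a deductive system.

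The key leverage is Remark \ref{qbe-130}, which says that the commutative quantum-Wajsberg algebras are exactly the Wajsberg algebras, together with the fact (recalled just before Remark \ref{qbe-130}) that in a Wajsberg algebra the relations $\le$ and $\le_Q$ coincide. I would invoke this first, to reduce the situation to the case where $x \le y$ and $x \le_Q y$ are interchangeable.

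Next, I would use the characterization in Proposition \ref{fqbe-80}, specifically the equivalence $(a) \Leftrightarrow (b)$, which says that $F \in \mathcal{DS}(X)$ iff $F$ is nonempty and satisfies $(F_1)$ and $(F_4)$ ($x \in F$, $y \in X$, $x \le y$ imply $y \in F$). So take an arbitrary $F \in \mathcal{DS}_q(X)$. By definition, $F$ is nonempty and satisfies $(F_1)$ directly; by Proposition \ref{fqbe-40}, $F$ also satisfies $(F_2')$. It remains to verify $(F_4)$: if $x \in F$, $y \in X$ and $x \le y$, then by commutativity $x \le_Q y$, and $(F_2')$ gives $y \in F$. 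Thus $F \in \mathcal{DS}(X)$, establishing $\mathcal{DS}_q(X) \subseteq \mathcal{DS}(X)$ and hence the desired equality.

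I do not expect any real obstacle here: once the Wajsberg/commutativity reduction is noted, the argument is essentially a one-line bridge between the $\le$-based condition $(F_4)$ and the $\le_Q$-based condition $(F_2')$. The only subtlety worth double-checking is that all earlier lemmas used (in particular Proposition \ref{fqbe-40} and the characterization in Proposition \ref{fqbe-80}) are valid in any quantum-Wajsberg algebra without extra assumptions, so that commutativity is used only to equate $\le$ and $\le_Q$.
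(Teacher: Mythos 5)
Your proposal is correct and follows essentially the same route as the paper: the paper's proof also reduces everything to the observation that commutativity forces $\le_Q$ to coincide with $\le$, so that condition $(F_2')$ (characterizing q-deductive systems via Proposition \ref{fqbe-40}) and condition $(F_4)$ (characterizing deductive systems via Proposition \ref{fqbe-80}) become the same condition. Your version merely spells out the two inclusions a bit more explicitly; no gap.
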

\begin{proof}
Since $X$ is commutative, $\le_Q = \le$, so that conditions $(F_2^{'})$ and $(F_4)$ coincide. 
It follows that $\mathcal{DS}(X)=\mathcal{DS}_q(X)$. 
\end{proof}

\begin{proposition} \label{fqbe-80-20} $\mathcal{DS}(X)=\mathcal{DS}_p(X)$. 
\end{proposition}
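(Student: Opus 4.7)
The plan is to use the characterization already established in Proposition \ref{fqbe-80}, specifically the equivalence $(a)\Leftrightarrow (c)$, which says that a subset $F\subseteq X$ lies in $\mathcal{DS}(X)$ if and only if it is nonempty and satisfies $(F_1)$ together with $(F_3)$. Since $(F_3)$ is precisely the defining condition that distinguishes a p-deductive system among q-deductive systems, the identity $\mathcal{DS}(X)=\mathcal{DS}_p(X)$ should fall out almost immediately, with no further calculation needed.

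For the inclusion $\mathcal{DS}(X)\subseteq \mathcal{DS}_p(X)$, I would start from $F\in \mathcal{DS}(X)$ and invoke Proposition \ref{fqbe-70} to obtain $F\in \mathcal{DS}_q(X)$. The characterization $(a)\Rightarrow(c)$ of Proposition \ref{fqbe-80} then guarantees that $F$ satisfies $(F_3)$, so by Definition \ref{fqbe-40-10}, $F\in \mathcal{DS}_p(X)$.

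For the reverse inclusion $\mathcal{DS}_p(X)\subseteq \mathcal{DS}(X)$, suppose $F\in \mathcal{DS}_p(X)$. Then, by definition, $F$ is a nonempty q-deductive system satisfying $(F_3)$; in particular, it satisfies $(F_1)$. Applying $(c)\Rightarrow(a)$ of Proposition \ref{fqbe-80} directly yields $F\in \mathcal{DS}(X)$. Combining the two inclusions completes the proof.

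There is essentially no obstacle here; the only thing to be careful about is the bookkeeping to ensure the correct labels $(F_1)$, $(F_3)$ are the ones appearing in both Definition \ref{fqbe-40-10} (p-deductive system) and the characterization in Proposition \ref{fqbe-80}(c), so that no additional verification of closure conditions is required.
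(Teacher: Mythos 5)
Your proof is correct and takes essentially the same route as the paper: both inclusions rest on Proposition \ref{fqbe-70} together with the equivalence $(a)\Leftrightarrow(c)$ of Proposition \ref{fqbe-80}. If anything, your forward inclusion is argued more carefully than the paper's, which writes the chain $\mathcal{DS}(X)\subseteq\mathcal{DS}_q(X)\subseteq\mathcal{DS}_p(X)$ (the second inclusion is backwards as stated, since p-deductive systems are special q-deductive systems), whereas you correctly obtain $(F_3)$ via $(a)\Rightarrow(c)$ before invoking Definition \ref{fqbe-40-10}.
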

\begin{proof}
By Proposition \ref{fqbe-70}, $\mathcal{DS}(X)\subseteq \mathcal{DS}_q(X)\subseteq \mathcal{DS}_p(X)$. 
Conversely, let $F\in \mathcal{DS}_p(X)$, that is $F$ is a nonempty subset of $X$ satisfying conditions 
$(F_1)$-$(F_3)$. According to Proposition \ref{fqbe-80}, by $(F_1)$ and $(F_3)$, $F\in \mathcal{DS}(X)$. 
Hence $\mathcal{DS}_p(X)\subseteq \mathcal{DS}(X)$, that is $\mathcal{DS}(X)=\mathcal{DS}_p(X)$. 
\end{proof}

\begin{remark} \label{fqbe-90} If $F\in \mathcal{DS}(X)$, then $F$ is a subalgebra of $X$. \\
Indeed, ff $F\in \mathcal{DS}(X)$, $x\le_Q y\ra x$, $y\le_Q x\ra y$ imply $x\le y\ra x$, $y\le x\ra y$, that is 
$x\ra (y\ra x)=y\ra (x\ra y)=1\in F$. For $x,y\in F$, we get $x\ra y, y\ra x\in F$. 
Thus $F$ is a subalgebra of $X$. 
\end{remark}

The QW algebra from the next example is derived from an orthomodular lattice with six elements (see \cite{DvPu}). 

\begin{example} \label{fqbe-100} 
Let $X=\{0,a,b,c,d,1\}$ and let $(X,\ra,0,1)$ be the involutive BE algebra with $\ra$ and the corresponding 
operation $\Cap$ given in the following tables:  
\[
\begin{array}{c|ccccccc}
\ra & 0 & a & b & c & d & 1 \\ \hline
0   & 1 & 1 & 1 & 1 & 1 & 1 \\ 
a   & c & 1 & 1 & c & 1 & 1 \\ 
b   & d & 1 & 1 & 1 & d & 1 \\ 
c   & a & a & 1 & 1 & 1 & 1 \\
d   & b & 1 & b & 1 & 1 & 1 \\
1   & 0 & a & b & c & d & 1
\end{array}
\hspace{10mm}
\begin{array}{c|ccccccc}
\Cap & 0 & a & b & c & d & 1 \\ \hline
0    & 0 & 0 & 0 & 0 & 0 & 0 \\ 
a    & 0 & a & b & 0 & d & a \\ 
b    & 0 & a & b & c & 0 & b \\ 
c    & 0 & 0 & b & c & d & c \\
d    & 0 & a & 0 & c & d & d \\
1    & 0 & a & b & c & d & 1
\end{array}
.
\]
Then $X$ is a quantum-Wajsberg algebra and $\mathcal{DS}_q(X)=\{\{1\},\{a,1\},\{b,1\},\{c,1\},\{d,1\},X\}$, 
$\mathcal{DS}(X)=\{\{1\},X\}$. 
\end{example} 

Even though we work with deductive systems, sometimes we will use the notation $x\odot y=(x\ra y^*)^*$, just for 
easier computations. \\

Let $X$ be a QW algebra. For every subset $Y\subseteq X$, the smallest q-deductive system of $X$ containing $Y$ 
(i.e. the intersection of all q-deductive systems $F$ of $X$ such that $Y\subseteq F$) is called the 
\emph{q-deductive system generated} by $Y$ and it is denoted by $[Y)$. If $Y=\{x\}$ we write $[x)$ 
instead of $[\{x\})$ and $[x)$ is called a \emph{principal q-deductive system} of $X$. 
We can easily show that: \\
$\hspace*{1cm}$ $[Y)=\{y\in X\mid y\ge_Q y_1\odot y_2\odot \cdots \odot y_n$, for some $n\ge 1$ and 
$y_1,y_2,\dots,y_n\in Y\}$ and \\
$\hspace*{1cm}$ $[x)=\{y\in X\mid y\ge_Q x^n$, for some $n\ge 1\}$, for any $x\in X$. \\
For $F\in \mathcal{DS}_q(X)$ and $x\in X$, we have: \\
$\hspace*{1cm}$ $F_x=[F\cup \{x\})=\{y\in X\mid y\ge_Q f\odot x^n$, for some $n\ge 1$ and $f\in F\}$. \\
Obviously, if $x\in F$, then $F_x=F$. \\
We extend the notions of maximal and strongly maximal ideals in right-QMV algebras (\cite{Giunt7, DvPu}) to the 
case of QW algebras. 

\begin{definition} \label{fqbe-110} 
\emph{
A q-deductive system $F\in \mathcal{DS}_q(X)$ is said to be: \\
$(1)$ \emph{maximal} if it is proper and it is not contained in any other proper q-deductive system of $X;$ \\
$(2)$ \emph{strongly maximal} if, for all $x\in X$, $x\notin F$, there exists $n\ge 1$ such that $(x^n)^*\in F$. 
}
\end{definition}

\begin{proposition} \label{fqbe-120} If $F\in \mathcal{DS}_q(X)$, the following are equivalent: \\
$(a)$ $F$ is maximal; \\
$(b)$ for any $x\notin F$, there exist $f\in F$ and $n\ge 1$ such that $f\odot x^n=0$. 
\end{proposition}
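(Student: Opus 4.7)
The plan is to exploit the explicit description of the principal extension
\[
F_x = [F\cup\{x\}) = \{y\in X \mid y\ge_Q f\odot x^n \text{ for some } n\ge 1,\ f\in F\}
\]
recalled just before the statement, together with the fact that $0\le_Q y$ for every $y\in X$ (Proposition \ref{qbe-20}$(5)$) and the antisymmetry of $\le_Q$ (Proposition \ref{qbe-80}$(2)$, combined with Proposition \ref{qbe-20}$(2)$).

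For $(a)\Rightarrow(b)$, I would fix $x\notin F$ and consider $F_x$. Since $x\in F_x\setminus F$, the inclusion $F\subsetneq F_x$ is strict, and the maximality of $F$ forces $F_x=X$. In particular $0\in F_x$, so by the above description there exist $f\in F$ and $n\ge 1$ with $0\ge_Q f\odot x^n$. Combined with $0\le_Q f\odot x^n$ and antisymmetry of $\le_Q$, this yields $f\odot x^n=0$.

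For $(b)\Rightarrow(a)$, I read the hypothesis as implicitly asserting that $F$ is proper (otherwise the universal clause is vacuous while $F$ is not maximal by definition). Suppose $G\in \mathcal{DS}_q(X)$ strictly contains $F$, and pick $x\in G\setminus F$. By hypothesis there exist $f\in F$ and $n\ge 1$ with $f\odot x^n=0$. Since $f,x\in G$, iterated application of $(F_1)$ (using associativity of $\odot$) gives $f\odot x^n\in G$, hence $0\in G$. Then $0\le_Q y$ for every $y\in X$, combined with condition $(F_2^{'})$ of Proposition \ref{fqbe-40}, forces $G=X$. Thus no proper q-deductive system strictly contains $F$, so $F$ is maximal.

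The argument is essentially routine once the generator formula for $F_x$ is in hand, and I do not anticipate a genuine obstacle. The one step deserving care is concluding $0\in G$ from $x\in G$ and $f\odot x^n=0$: one must rely on the $\odot$-closure built into $(F_1)$ rather than any stronger property, since $G$ is only assumed to be a q-deductive system and need not contain all of $\{y : x\le y\}$ for $x\in G$.
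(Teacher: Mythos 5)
Your proposal is correct and follows essentially the same route as the paper: the forward direction uses $F_x=X$ and the generator description of $F_x$ to extract $f\odot x^n\le_Q 0$ (hence $=0$), and the converse shows any strictly larger q-deductive system contains $0$ and is therefore all of $X$. Your extra remarks (antisymmetry of $\le_Q$, closure under $\odot$ via $(F_1)$, and the implicit properness of $F$ in $(b)$) only make explicit steps the paper leaves tacit.
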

\begin{proof}
$(a) \Rightarrow (b)$ Since $F$ is maximal and $x\notin F$, then $F_x=X$, hence $0\in F_x$. 
It follows that there exist $f\in F$ and $n\ge 1$ such that $f\odot x^n\le_Q 0$, that is $f\odot x^n= 0$. \\
$(b) \Rightarrow (a)$ Let $F^{'}$ be a proper q-deductive system of $X$ such that $F\subsetneq  F^{'}$. 
Then there exists $x\in F^{'}$ such that $x\notin F$. 
It follows that there exist $f\in F$ and $n\ge 1$ such that $f\odot x^n=0$. Since $f,x\in F^{'}$ we get 
$0\in F^{'}$, that is $F^{'}=X$. Hence $F$ is a maximal q-deductive system of $X$. 
\end{proof}

\begin{proposition} \label{fqbe-130} Every strongly maximal q-deductive system of $X$ is maximal.
\end{proposition}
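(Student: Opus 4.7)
The plan is to reduce this to the characterization of maximality given in Proposition \ref{fqbe-120}, and then exploit the defining identity (m-Re) of the associated QMV algebra.

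Let $F$ be a strongly maximal q-deductive system (which I will take to be proper; otherwise the implication that $F$ is maximal, in the sense of Definition \ref{fqbe-110}(1), would need properness added as a hypothesis). By Proposition \ref{fqbe-120}, it suffices to show that for every $x \in X$ with $x \notin F$ there exist $f \in F$ and $n \ge 1$ with $f \odot x^n = 0$. So let $x \notin F$ be arbitrary.

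By strong maximality, there exists $n \ge 1$ such that $(x^n)^* \in F$. I would then simply put $f := (x^n)^*$. Using the definitional equivalence between quantum-Wajsberg algebras and quantum-MV algebras (Proposition \ref{qmv-50}) together with axioms (Pcomm) and (m-Re) of the associated m-BE algebra, one obtains
\[
f \odot x^n \;=\; (x^n)^* \odot x^n \;=\; x^n \odot (x^n)^* \;=\; 0.
\]
Thus $f \in F$, $n \ge 1$, and $f \odot x^n = 0$, so the characterization in Proposition \ref{fqbe-120} applies and $F$ is maximal.

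There is no real obstacle here; the argument is essentially a direct translation between the two equivalent signatures. The only delicate point is a bookkeeping one: strong maximality as stated is vacuous when $F = X$, so one must either include properness in the definition or in the statement being proved, in order for the conclusion (which demands properness) to go through.
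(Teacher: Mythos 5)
Your proof is correct. The paper argues directly: given $F'\in \mathcal{DS}_q(X)$ with $F\subseteq F'$ and some $x\in F'\setminus F$, strong maximality yields $(x^n)^*\in F\subseteq F'$, hence $0=x^n\odot (x^n)^*\in F'$ and so $F'=X$. You instead route the argument through the characterization of maximality in Proposition \ref{fqbe-120}(b), taking $f:=(x^n)^*$ so that $f\odot x^n=(x^n)^*\odot x^n=0$ by commutativity of $\odot$ and $a\odot a^*=0$ (which indeed holds in the term-equivalent QMV/m-BE signature, or directly from $a\odot a^*=(a\ra a^{**})^*=1^*=0$). The two arguments are essentially the same: the proof of $(b)\Rightarrow(a)$ in Proposition \ref{fqbe-120} is precisely the containment argument the paper repeats here, so your version avoids redoing it at the cost of invoking that proposition, while the paper's version is self-contained. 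Your bookkeeping caveat about properness is fair, but it applies equally to the paper's proof and to Proposition \ref{fqbe-120} itself (for $F=X$ condition (b) is vacuously true while maximality fails); the paper tacitly treats strongly maximal q-deductive systems as proper.
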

\begin{proof}
Let $F$ be a strongly maximal q-deductive system of $X$ and let $F^{'}\in \mathcal{DS}_q(X)$ such that $F\subseteq F^{'}$. 
Suppose that there exists $x\in F^{'}$, $x\notin F$. It follows that there exists $n\ge 1$ such that $(x^n)^*\in F$. 
Since $x\in F^{'}$, we have $x^n\in F^{'}$, hence $0=x^n\odot (x^n)^*\in F^{'}$. 
Thus $F^{'}=X$, that is $F$ is maximal.
\end{proof}

\begin{proposition} \label{fqbe-140} If $\mathcal{DS}_q(X)=\mathcal{DS}(X)$, then every maximal q-deductive system 
of $X$ is strongly maximal.  
\end{proposition}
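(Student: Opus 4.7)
The plan is to combine the characterization of maximal q-deductive systems given in Proposition \ref{fqbe-120} with the extra strength of the hypothesis $\mathcal{DS}_q(X)=\mathcal{DS}(X)$, which lets us upgrade from $\le_Q$-closure to $\le$-closure inside $F$.

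Let $F$ be a maximal q-deductive system of $X$, and fix $x\in X$ with $x\notin F$. First I would invoke Proposition \ref{fqbe-120} to produce $f\in F$ and $n\ge 1$ such that $f\odot x^n=0$. Next I would translate this identity back into the implication language using the definition $a\odot b=(a\ra b^*)^*$: from $(f\ra (x^n)^*)^*=0$ we obtain $f\ra (x^n)^*=1$, i.e., $f\le (x^n)^*$.

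Now comes the step where the hypothesis $\mathcal{DS}_q(X)=\mathcal{DS}(X)$ is used. Since $F\in\mathcal{DS}_q(X)$, the hypothesis gives $F\in\mathcal{DS}(X)$, and hence by Proposition \ref{fqbe-80} $(a)\Rightarrow(b)$, $F$ satisfies condition $(F_4)$: if $f\in F$, $y\in X$ and $f\le y$, then $y\in F$. Applying this with $y:=(x^n)^*$ yields $(x^n)^*\in F$, which is exactly the defining condition for strong maximality from Definition \ref{fqbe-110}$(2)$.

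The only subtle point is the distinction between the two orderings $\le$ and $\le_Q$: a generic q-deductive system is closed only under $\le_Q$ via $(F_2^{'})$, which would not suffice here, because $f\le (x^n)^*$ comes merely from $f\ra (x^n)^*=1$ and need not be a $\le_Q$-inequality. This is precisely what the hypothesis $\mathcal{DS}_q(X)=\mathcal{DS}(X)$ repairs, by forcing $F$ to be closed upward under $\le$. Once this is noticed, the argument is immediate and no further computation is required.
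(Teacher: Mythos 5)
Your proof is correct and follows essentially the same route as the paper: obtain $f\in F$, $n\ge 1$ with $f\odot x^n=0$ from maximality, deduce $f\le (x^n)^*$, and use the hypothesis $\mathcal{DS}_q(X)=\mathcal{DS}(X)$ so that $(F_4)$ yields $(x^n)^*\in F$. The only cosmetic difference is that you cite Proposition \ref{fqbe-120} and unfold the definition of $\odot$, whereas the paper reruns the $F_x=X$ argument and invokes Proposition \ref{qbe-120}; your closing remark about $\le$ versus $\le_Q$ correctly pinpoints where the hypothesis is needed.
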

\begin{proof}
Let $X$ be a maximal q-deductive system of $X$ and let $x\in F$ such that $x\notin F$. 
Then $F_x=X$, hence $0\in F_x$. 
It follows that there exist $n\ge 1$ and $f\in F$ such that $f\odot x^n\le_Q 0$, so that $f\odot x^n\le 0$. 
By Proposition \ref{qbe-120}, we have $f\le x^n\ra 0=(x^n)^*$. 
Since $F\in \mathcal{DS}(X)$, by $(F_4)$ we get $(x^n)^*\in F$, hence $F$ is strongly maximal. 
\end{proof}

\begin{corollary} \label{fqbe-150} If $X$ is commutative, then every maximal q-deductive system of $X$ is strongly maximal.  
\end{corollary}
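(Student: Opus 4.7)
The statement is a direct corollary of the two results just preceding it, so my plan is simply to chain them together rather than redo any of the underlying work.

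First I would invoke Corollary \ref{fqbe-80-10}, which tells us that commutativity of $X$ forces $\le_Q$ to coincide with $\le$ and therefore collapses conditions $(F_2^{'})$ and $(F_4)$, yielding the equality $\mathcal{DS}(X)=\mathcal{DS}_q(X)$. This is the only place where the commutativity hypothesis enters: it is used exclusively to guarantee that the two families of subsystems coincide, so that any maximal q-deductive system is automatically a maximal deductive system in the sense of Definition \ref{fqbe-50}.

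Next, with the equality $\mathcal{DS}_q(X)=\mathcal{DS}(X)$ in hand, I would apply Proposition \ref{fqbe-140} verbatim. That proposition takes a maximal q-deductive system $F$, passes to the generated system $F_x$ for $x\notin F$, uses $F_x=X$ to obtain $f\odot x^n\le 0$ for some $f\in F$ and $n\ge 1$, then uses Proposition \ref{qbe-120} to pull this back across the residuation as $f\le (x^n)^*$, and finally invokes property $(F_4)$ (available because $F\in\mathcal{DS}(X)$) to conclude $(x^n)^*\in F$. Thus $F$ is strongly maximal.

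There is no real obstacle here; the statement is obtained by composing Corollary \ref{fqbe-80-10} with Proposition \ref{fqbe-140}, and the only conceptual point worth spelling out is that commutativity is what enables the transfer between the q-deductive and deductive framework, after which the residuation argument of Proposition \ref{fqbe-140} goes through unchanged.
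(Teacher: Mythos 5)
Your proposal is correct and matches the paper's intended argument exactly: the corollary is obtained by combining Corollary \ref{fqbe-80-10} (commutativity gives $\mathcal{DS}(X)=\mathcal{DS}_q(X)$) with Proposition \ref{fqbe-140}, and your summary of how that proposition's residuation argument works is accurate.
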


\begin{remark} \label{fqbe-160} 
$(1)$ In general, a maximal q-deductive system of $X$ need not be strongly maximal. 
Indeed, consider the QW algebra $X$ from Example \ref{fqbe-100} and the maximal q-deductive system $F=\{a,1\}$. 
We have $b^n=b$ for all $n\ge 1$, so that $(b^n)^*=b^*=d\notin F$. Hence $F$ is not strongly maximal. \\
$(2)$ The same remark for a maximal deductive system of $X$: the maximal deductive system $F=\{1\}$ from 
Example \ref{fqbe-100} is not strongly maximal. 
\end{remark}

\begin{remark} \label{fqbe-170} A q-deductive system $F$ of a BE algebra $(X,\ra,1)$ is called \emph{commutative} if 
for all $x,y\in X$, $y\ra x\in F$ implies $((x\ra y)\ra y)\ra x\in F$. 
This notion plays an important role in the study of states, measures, internal states and valuations on $X$, 
since the kernels of these maps are commutative q-deductive system (see \cite{Ciu61}).    
In the case of a QW algebra $X$, due to Proposition \ref{qbe-60}$(8)$ we have $(x\Cup y)\ra x=y\ra x$, for all 
$x,y\in X$, so that every q-deductive system of $X$ is commutative. 
\end{remark}

$\vspace*{5mm}$

\section{Congruences and quotient quantum-Wajsberg algebras}

In this section, we introduce the notion of congruences determined by deductive systems of a quantum-Wajsberg algebra, 
and we show that there is a relationship between congruences and deductive systems. 
We also define the quotient quantum-Wajsberg algebra with respect to a deductive system $F$, and prove that the 
quotient quantum-Wajsberg algebra is locally finite if and only if $F$ is strongly maximal. 
In what follows, $(X,\ra,^*,1)$ will be a quantum-Wajsberg algebra, unless otherwise stated.  

\begin{definition} \label{cqbe-10} Let $F\in \mathcal{DS}_q(X)$. For $x,y\in X$, $x\equiv_F y$ if and only if 
there exists $\lambda\in X$ such that $x,y\le_Q \lambda$ and $\lambda\ra x, \lambda\ra y\in F$. 
\end{definition}

\begin{proposition} \label{cqbe-20} If $F\in \mathcal{DS}(X)$, the following are equivalent for all $x,y\in X$: \\
$(a)$ $x\equiv_F y;$ \\
$(b)$ there exist $\alpha, \beta \in F$ such that $x\le_Q \alpha$, $y\le_Q \beta$ and $\alpha\ra x=\beta\ra y;$ \\
$(c)$ there exist $\alpha, \beta \in F$ such that $x\le_Q \beta\ra y$ and $y\le_Q \alpha\ra x$. 
\end{proposition}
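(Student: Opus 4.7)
The plan is to prove the cycle $(a) \Rightarrow (b) \Rightarrow (c) \Rightarrow (a)$; the first two implications are quick and the third carries the bulk of the work. For $(a) \Rightarrow (b)$, I would take $\alpha := \lambda \ra x$ and $\beta := \lambda \ra y$, which lie in $F$ by hypothesis. The conditions $x \le_Q \alpha$ and $y \le_Q \beta$ follow from Proposition \ref{qbe-60}$(2)$, and the key equality $\alpha \ra x = \beta \ra y = \lambda$ collapses via the second identity of Proposition \ref{qbe-60}$(1)$ after noting that $x = x \Cap \lambda$ (since $x \le_Q \lambda$) and symmetrically for $y$. For $(b) \Rightarrow (c)$, I would substitute $\alpha \ra x = \beta \ra y$ into the bounds $x \le_Q \alpha \ra x$ and $y \le_Q \beta \ra y$ coming from Proposition \ref{qbe-60}$(2)$.

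The substantive step is $(c) \Rightarrow (a)$, where my candidate witness is $\lambda := (\alpha \ra x) \Cap (\beta \ra y)$. From the hypothesis of $(c)$ combined with Proposition \ref{qbe-60}$(2)$, both $\alpha \ra x$ and $\beta \ra y$ are common $\le_Q$-upper bounds of $\{x,y\}$, so $x, y \le_Q \lambda$ follows from monotonicity of $\Cap$ in $\le_Q$ (Proposition \ref{qbe-70}$(4)$) together with the identities $x = x \Cap (\beta \ra y)$ and $y = y \Cap (\beta \ra y)$. To verify $\lambda \ra x \in F$, set $\gamma := (\beta \ra y) \ra x$ and apply Proposition \ref{qbe-20}$(8)$:
\[
\lambda \ra x = \bigl((\beta \ra y) \ra (\alpha \ra x)\bigr) \ra \gamma;
\]
then $(BE_4)$ rewrites $(\beta \ra y) \ra (\alpha \ra x) = \alpha \ra \gamma$, giving $\lambda \ra x = (\alpha \ra \gamma) \ra \gamma = \alpha \Cup \gamma$, which lies in $F$ because $\alpha \in F$ and $F$ satisfies $(F_3)$ by Proposition \ref{fqbe-80}. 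Analogously, $\lambda \ra y$ reduces to $(\alpha \ra \gamma) \ra (\beta \Cup y)$, which lies in $F$ by $(F_2)$ since $\beta \Cup y \in F$.

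The main obstacle is identifying the right $\lambda$ in $(c) \Rightarrow (a)$. The one-sided candidates $\alpha \ra x$ and $\beta \ra y$ each secure only one of the two required $F$-memberships. The symmetric meet $(\alpha \ra x) \Cap (\beta \ra y)$ is what lets Proposition \ref{qbe-20}$(8)$ together with $(BE_4)$ refactor both $\lambda \ra x$ and $\lambda \ra y$ into expressions of the shapes $\alpha \Cup \gamma$ and $(\alpha \ra \gamma) \ra (\beta \Cup y)$, which lie in $F$ by the closure properties of a deductive system.
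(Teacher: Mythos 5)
Your proposal is correct, and it closes the equivalence by a different route than the paper in the one place where the routes genuinely diverge. Your $(a)\Rightarrow(b)$ and $(b)\Rightarrow(c)$ are essentially the paper's arguments (the paper gets $(\lambda\ra x)\ra x=\lambda\Cup x=\lambda$ from Proposition \ref{qbe-70}$(1)$, you get the same from the second identity of Proposition \ref{qbe-60}$(1)$ plus $x=x\Cap\lambda$; both are fine). The difference is the last step: the paper proves $(c)\Rightarrow(b)$ (and separately $(b)\Rightarrow(a)$), constructing $\gamma:=\alpha\Cup x$ and $\delta:=(\alpha\ra x)\ra y$, and it must work to get $\delta\in F$ -- it bounds $\beta\ra\delta\ge_Q\gamma\in F$ and then applies modus ponens $(DS_2)$ -- before checking $\gamma\ra x=\delta\ra y$. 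You instead prove $(c)\Rightarrow(a)$ directly with the symmetric witness $\lambda:=(\alpha\ra x)\Cap(\beta\ra y)$: the bounds $x,y\le_Q\lambda$ do follow as you say, since e.g.\ $x\le_Q\alpha\ra x$ gives $x\Cap(\beta\ra y)\le_Q\lambda$ by Proposition \ref{qbe-70}$(4)$ and $x=x\Cap(\beta\ra y)$ by hypothesis; and your computations via Proposition \ref{qbe-20}$(8)$ and $(BE_4)$, namely $\lambda\ra x=(\alpha\ra\gamma)\ra\gamma=\alpha\Cup\gamma$ and $\lambda\ra y=(\alpha\ra\gamma)\ra(\beta\Cup y)$ with $\gamma=(\beta\ra y)\ra x$, are correct and land in $F$ using only $(F_3)$ and $(F_2)$. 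What your route buys: a single three-step cycle, no separate $(b)\Rightarrow(a)$, and no appeal to $(DS_2)$ or to a $\le_Q$-comparison to certify membership of an auxiliary element. What the paper's route buys: it passes through the normalized witnesses of condition $(b)$ (with $\alpha\ra x=\beta\ra y$), which is the form of $\equiv_F$ it reuses later, whereas you produce the witness for the definition of $\equiv_F$ directly.
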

\begin{proof}
$(a)\Rightarrow (b)$ Suppose that $x\equiv_F y$, so that there exists $\lambda\in X$ such that $x,y\le_Q \lambda$ 
and $\lambda\ra x, \lambda\ra y\in F$. Take $\alpha:=\lambda\ra x$, $\beta:=\lambda\ra y$. 
By Proposition \ref{qbe-60}$(2)$, $x\le_Q \alpha$ and $y\le_Q \beta$. 
Using Proposition \ref{qbe-70}$(1)$, $\lambda=\lambda\Cup x=\lambda\Cup y$. 
It follows that $\alpha\ra x=(\lambda\ra x)\ra x=\lambda\Cup x=\lambda=\lambda\Cup y=(\lambda\ra y)\ra y=\beta\ra y$. \\
$(b)\Rightarrow (a)$ Assume that there exist $\alpha, \beta \in F$ such that $x\le_Q \alpha$, $y\le_Q \beta$ and 
$\alpha\ra x=\beta\ra y$. 
Taking $\lambda:=\alpha\ra x=\beta\ra y$, we have $x\le_Q \lambda$ and $y\le_Q \lambda$. 
Moreover $\lambda\ra x=(\alpha\ra x)\ra x=\alpha\Cup x=\alpha\in F$ (since $x\le_Q \alpha)$ and 
$\lambda\ra y=(\beta\ra y)\ra y=\beta\Cup y=\beta\in F$ (since $y\le_Q \beta$). 
Hence $x\equiv_F y$. \\
$(b)\Rightarrow (c)$ With $\alpha, \beta$ from $(b)$ we have $x\le_Q \alpha\ra x=\beta\ra y$ and 
$y\le_Q \beta\ra y=\alpha\ra x$, hence condition $(c)$ is satisfied. \\
$(c)\Rightarrow (b)$ With $\alpha, \beta$ from $(c)$, take $\gamma:=\alpha\Cup x$ and $\delta:=(\alpha\ra x)\ra y$. 
Obviously $x\le_Q \gamma$ and $y\le_Q \delta$.  
Since $F\in \mathcal{DS}(X)$ and $\alpha\in F$, by Proposition \ref{fqbe-80} we get $\gamma=\alpha\Cup x\in F$. 
From $\beta\ra y\ge_Q x$ we have 
$\beta\ra \delta=\beta\ra ((\alpha\ra x)\ra y)=(\alpha\ra x)\ra (\beta\ra y)\ge_Q (\alpha\ra x)\ra x=
\alpha\Cup x=\gamma\in F$. 
It follows that $\beta\ra \delta\in F$.  
From $F\in \mathcal{DS}(X)$ and $\beta, \beta\ra \delta\in F$, we get $\delta \in F$. 
Moreover $\gamma\ra x=(\alpha\Cup x)\ra x=\alpha\ra x$ (by Proposition \ref{qbe-60}$(9)$) and 
$\delta\ra y=((\alpha\ra x)\ra y)\ra y=(\alpha\ra x)\Cup y=\alpha \ra x$, since $y\le_Q \alpha\ra x$.  
Hence $\gamma\ra x=\delta\ra y=\alpha\ra x$. 
Redefining $\alpha:=\gamma$ and $\beta:=\delta$, the proof of $(b)$ is complete. 
\end{proof}

\begin{lemma} \label{cqbe-20-10} Let $x,y\in X$. \\
$(1)$ if $F\in \mathcal{DS}_q(X)$ and $x\equiv_F y$, then $x\ra y, y\ra x\in F;$ \\
$(2)$ if $F\in \mathcal{DS}(X)$, $x\equiv_F y$ and $x\in F$, then $y\in F$.  
\end{lemma}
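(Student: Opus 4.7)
The plan is to prove both parts by unwinding the definition of $\equiv_F$ and applying the monotonicity of $\to$ established in Proposition \ref{qbe-70}, together with the alternative characterization $(F_2^{'})$ of q-deductive systems given in Proposition \ref{fqbe-40}.

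For part $(1)$, I would start from $x\equiv_F y$ and pick $\lambda\in X$ as in Definition \ref{cqbe-10}, so $x,y\le_Q \lambda$ and $\lambda\ra x,\lambda\ra y\in F$. The key observation is that Proposition \ref{qbe-70}$(3)$ says $u\le_Q v$ implies $v\ra z\le_Q u\ra z$. Applying this with $u:=x$, $v:=\lambda$, $z:=y$ gives $\lambda\ra y\le_Q x\ra y$; since $\lambda\ra y\in F$, condition $(F_2^{'})$ from Proposition \ref{fqbe-40} yields $x\ra y\in F$. Symmetrically, from $y\le_Q \lambda$ and $\lambda\ra x\in F$ we obtain $y\ra x\in F$.

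For part $(2)$, the strategy is to reduce to part $(1)$ via the inclusion $\mathcal{DS}(X)\subseteq \mathcal{DS}_q(X)$ from Proposition \ref{fqbe-70}. Since $F\in \mathcal{DS}(X)$, it is in particular a q-deductive system, so part $(1)$ gives $x\ra y\in F$. Now with $x\in F$ and $x\ra y\in F$ in hand, axiom $(DS_2)$ from Definition \ref{fqbe-50} delivers $y\in F$.

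There is no real obstacle here; the statement is a direct consequence of the way $\equiv_F$ is defined relative to $\le_Q$, the monotonicity of implication in the antecedent, and the closure axioms of (q-)deductive systems. The only thing worth being careful about is using the correct direction of the monotonicity in Proposition \ref{qbe-70}$(3)$ (the first argument of $\to$ is antitone with respect to $\le_Q$), so that the elements $\lambda\ra x,\lambda\ra y$ already in $F$ are $\le_Q$ below $y\ra x,x\ra y$ respectively, allowing $(F_2^{'})$ to be invoked rather than some property going the wrong way.
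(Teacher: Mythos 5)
Your proof is correct and matches the paper's own argument: part (1) is exactly the antitonicity step $\lambda\ra y\le_Q x\ra y$, $\lambda\ra x\le_Q y\ra x$ from Proposition \ref{qbe-70}$(3)$ combined with $(F_2^{'})$, and part (2) is the same reduction via $\mathcal{DS}(X)\subseteq \mathcal{DS}_q(X)$ followed by $(DS_2)$. No issues.
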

\begin{proof}
$(1)$ Since $x\equiv_F y$, there exists $\alpha\in X$ such that $x,y\le_Q \alpha$ and $\alpha\ra x, \alpha\ra y\in F$. 
It follows that $\alpha\ra y\le_Q x\ra y$ and $\alpha\ra x\le_Q y\ra x$, hence $x\ra y, y\ra x\in F$. \\ 
$(2)$ Since $\mathcal{DS}(X)\subseteq \mathcal{DS}_q(X)$, using $(1)$, $x\equiv_F y$ implies $x\ra y\in F$. 
From $x, x\ra y\in F$ we get $y\in F$. 
\end{proof}

\begin{proposition} \label{cqbe-20-20} Let $X$ be a commutative QW algebra, let $F\in \mathcal{DS}(X)$ and 
let $x,y\in X$. Then $x\equiv_F y$ if and only if $x\ra y, y\ra x\in F$. 
\end{proposition}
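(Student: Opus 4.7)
The forward direction does not require commutativity and is essentially Lemma \ref{cqbe-20-10}(1): if $x\equiv_F y$, then by definition there is some $\lambda$ with $x,y\le_Q\lambda$ and $\lambda\ra x,\lambda\ra y\in F$, and monotonicity of $\ra$ in its first argument (Proposition \ref{qbe-70}(3)) gives $\lambda\ra y\le_Q x\ra y$ and $\lambda\ra x\le_Q y\ra x$, so by $(F_2^{'})$ we deduce $x\ra y, y\ra x\in F$. I would just cite this lemma rather than redo the argument.

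For the converse, the plan is to exhibit an explicit witness $\lambda$ for the congruence relation. I propose $\lambda:=x\Cup y$. The key observation is that commutativity of $X$ gives $x\Cup y=y\Cup x$, so Proposition \ref{qbe-60}(7) applied to both orderings yields $x\le_Q x\Cup y$ and $y\le_Q x\Cup y$, which is the first half of the defining condition for $x\equiv_F y$.

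It then remains to verify that $\lambda\ra x$ and $\lambda\ra y$ lie in $F$. Using Proposition \ref{qbe-60}(8) I compute
\[
\lambda\ra x=(x\Cup y)\ra x=y\ra x,
\]
and using Proposition \ref{qbe-60}(9),
\[
\lambda\ra y=(x\Cup y)\ra y=x\ra y.
\]
Both are in $F$ by hypothesis, so $x\equiv_F y$ with witness $\lambda=x\Cup y$.

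I do not foresee a genuine obstacle; the only subtlety is recognizing where commutativity enters. It is used exclusively to obtain $x\le_Q x\Cup y$ (in a general QW algebra one only has $y\le_Q x\Cup y$ directly from Proposition \ref{qbe-60}(7)), which is why the statement is restricted to commutative QW algebras. The identities $(x\Cup y)\ra x=y\ra x$ and $(x\Cup y)\ra y=x\ra y$ hold in any QW algebra, so they do not need the commutativity assumption.
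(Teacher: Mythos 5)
Your proposal is correct, and the forward direction is exactly the paper's (a citation of Lemma \ref{cqbe-20-10}(1)). For the converse you take a genuinely more direct route than the paper. The paper sets $\alpha:=y\ra x$, $\beta:=x\ra y$, invokes the fact that a commutative QW algebra is a commutative BCK algebra in which $\le_Q$ coincides with $\le$ (so that $x\le (x\ra y)\ra y$ upgrades to $x\le_Q\beta\ra y$ and likewise $y\le_Q\alpha\ra x$), and then concludes via the characterization in Proposition \ref{cqbe-20}(c), which itself needs $F\in\mathcal{DS}(X)$. You instead verify Definition \ref{cqbe-10} directly with the explicit witness $\lambda=x\Cup y$: commutativity is used only in its defining form $x\Cup y=y\Cup x$ to get $x\le_Q x\Cup y$ alongside the unconditional $y\le_Q x\Cup y$ from Proposition \ref{qbe-60}(7), and the identities $(x\Cup y)\ra x=y\ra x$, $(x\Cup y)\ra y=x\ra y$ from Proposition \ref{qbe-60}(8),(9) place $\lambda\ra x,\lambda\ra y$ in $F$ by hypothesis. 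What your version buys: it bypasses both Proposition \ref{cqbe-20} and the identification of commutative QW algebras with Wajsberg algebras, and it makes visible that the converse implication uses no deductive-system property of $F$ at all, only membership of $x\ra y$ and $y\ra x$. The two proofs are closely related underneath (in the commutative case the paper's implicit witness $\alpha\ra x=\beta\ra y$ is again $y\Cup x=x\Cup y$), but yours is the more self-contained argument.
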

\begin{proof}
If $x\equiv_F y$, by Lemma \ref{cqbe-20-10}, $x\ra y, y\ra x\in F$.
Conversely, let $x,y\in X$ such that $x\ra y, y\ra x\in F$. 
Since $X$ is commutative, then it is a commutative BCK algebra and $\le_Q$ coincides with $\le$. 
Taking $\alpha:=y\ra x$, $\beta:=x\ra y$, we have $\alpha, \beta \in F$ and 
$x\le (x\ra y)\ra y=\beta\ra y$, $y\le (y\ra x)\ra x=\alpha\ra x$. 
Hence $x\le_Q \beta\ra y$, $y\le_Q \alpha\ra x$, and by Proposition \ref{cqbe-20}$(c)$, $x\equiv_F y$. 
\end{proof}

\begin{proposition} \label{cqbe-30} Let $F\in \mathcal{DS}(X)$. The relation $\equiv_F$ is an equivalence relation 
on $X$. 
\end{proposition}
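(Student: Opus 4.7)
The plan is to verify reflexivity, symmetry, and transitivity of $\equiv_F$ directly. Reflexivity and symmetry are essentially immediate from the definition, whereas transitivity will require chaining two witnesses into a single one, and that is where I expect the real work to happen.

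For reflexivity, given any $x \in X$, I would simply take $\lambda := x$. Then $x \le_Q x$ by Proposition \ref{qbe-20}(2), and $\lambda \to x = x \to x = 1 \in F$ since $F \in \mathcal{DS}(X)$ satisfies $(DS_1)$. Hence $x \equiv_F x$. Symmetry is immediate since the defining clause is manifestly symmetric in $x$ and $y$: the same witness $\lambda$ works for $y \equiv_F x$.

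The interesting step is transitivity. Suppose $x \equiv_F y$ and $y \equiv_F z$. Rather than working with the original witnesses $\lambda, \mu$ directly, my plan is to exploit the equivalent characterization in Proposition \ref{cqbe-20}(c), which is available since $F \in \mathcal{DS}(X)$. This gives $\alpha_1, \beta_1 \in F$ with $x \le_Q \beta_1 \to y$ and $y \le_Q \alpha_1 \to x$, and $\alpha_2, \beta_2 \in F$ with $y \le_Q \beta_2 \to z$ and $z \le_Q \alpha_2 \to y$. To produce a single witness pair in $F$ connecting $x$ and $z$, I would apply Proposition \ref{qbe-70}(3) (monotonicity of $\to$ on the right with respect to $\le_Q$) to pass from $y \le_Q \beta_2 \to z$ to $\beta_1 \to y \le_Q \beta_1 \to (\beta_2 \to z)$, then use Proposition \ref{qbe-120}(1) to rewrite this as $(\beta_1 \odot \beta_2) \to z$, and finally invoke transitivity of $\le_Q$ (Proposition \ref{qbe-80}(2)) to conclude $x \le_Q (\beta_1 \odot \beta_2) \to z$. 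A symmetric computation on the other side gives $z \le_Q (\alpha_2 \odot \alpha_1) \to x$.

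The final piece is to note that $\beta_1 \odot \beta_2$ and $\alpha_2 \odot \alpha_1$ both lie in $F$ by $(F_1)$, since $F \in \mathcal{DS}(X) \subseteq \mathcal{DS}_q(X)$ by Proposition \ref{fqbe-70}. Setting $\beta := \beta_1 \odot \beta_2$ and $\alpha := \alpha_2 \odot \alpha_1$, Proposition \ref{cqbe-20}(c) yields $x \equiv_F z$, completing transitivity. The main obstacle I anticipate is purely bookkeeping: making sure the monotonicity and associativity manipulations are applied in the correct order relative to $\le_Q$ (not just $\le$), so that the hypotheses of Proposition \ref{qbe-70}(3) and Proposition \ref{qbe-120}(1) are indeed met at each step.
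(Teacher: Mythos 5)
Your proof is correct, and it is close in spirit to the paper's: both arguments reduce transitivity to the criterion of Proposition \ref{cqbe-20}$(c)$, and in fact your combined witnesses $\beta_1\odot\beta_2$ and $\alpha_2\odot\alpha_1$ are, up to commutativity of $\odot$, exactly the elements $\delta=(\alpha\ra y)\odot(\beta\ra z)$ and $\gamma=(\alpha\ra x)\odot(\beta\ra y)$ that the paper constructs. The difference lies in how the inequalities $x\le_Q \beta\ra z$ and $z\le_Q \alpha\ra x$ are verified. The paper starts from the raw Definition \ref{cqbe-10} witnesses and unfolds $\delta\ra z$ and $\gamma\ra x$ by a fairly long explicit computation (rewriting through $z^*\ra\delta^*$, using $(BE_4)$, $\Cup$-identities and monotonicity), whereas you first apply Proposition \ref{cqbe-20}$(c)$ to \emph{both} hypotheses and then chain the resulting inequalities using only Proposition \ref{qbe-70}$(3)$ (monotonicity in the second argument with respect to $\le_Q$), the identity $x\ra(y\ra z)=(x\odot y)\ra z$ of Proposition \ref{qbe-120}$(1)$, and transitivity of $\le_Q$ (Proposition \ref{qbe-80}$(2)$); membership of the combined witnesses in $F$ follows from $(F_1)$ via Proposition \ref{fqbe-70}, as you say. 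Your route is shorter because it front-loads the computational work into the already-established equivalence of Proposition \ref{cqbe-20}, while the paper's version keeps the argument self-contained at the cost of repeating that kind of computation; both are valid, and all the propositions you invoke are stated for $\le_Q$, so the monotonicity steps are legitimately applicable.
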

\begin{proof}
Obviously $\equiv_F$ is reflexive and symmetric. \\
Suppose that $x\equiv_F y$ and $y\equiv_F z$. 
It follows that there exist $\alpha, \beta \in X$ such that $x,y\le_Q \alpha$, $\alpha\ra x, \alpha\ra y\in F$ 
and $y,z\le_Q \beta$, $\beta\ra y, \beta\ra z\in F$. 
Take $\gamma:=(\alpha\ra x)\odot (\beta\ra y)\in F$ and $\delta:=(\alpha\ra y)\odot (\beta\ra z)\in F$. 
We show that $x\le_Q \delta\ra z$ and $z\le_Q \gamma\ra x$. Indeed, we have: \\
$\hspace*{2.00cm}$ $\delta\ra z=z^*\ra \delta^*=z^*\ra ((\alpha\ra y)\odot (\beta\ra z))^*$ \\
$\hspace*{3.10cm}$ $=z^*\ra ((\alpha\ra y)\ra (\beta\ra z)^*)=(\alpha\ra y)\ra (z^*\ra (\beta\ra z)^*)$ \\
$\hspace*{3.10cm}$ $=(\alpha\ra y)\ra ((\beta\ra z)\ra z)=(\alpha\ra y)\ra (\beta\Cup z)$ \\
$\hspace*{3.10cm}$ $=(\alpha\ra y)\ra \beta\ge_Q (\alpha\ra y)\ra y=\alpha\Cup y=\alpha\ge_Q x$. \\
(Since $z\le_Q \beta$ implies $\beta\Cup z=\beta$, $\beta\ge_Q y$ implies 
$(\alpha\ra y)\ra \beta\ge_Q (\alpha\ra y)\ra y$, and $y\le_Q \alpha$ implies $\alpha\Cup y=\alpha$). 
Similarly: \\
$\hspace*{2.00cm}$ $\gamma\ra x=x^*\ra \gamma^*=x^*\ra ((\alpha\ra x)\odot (\beta\ra y))^*$ \\
$\hspace*{3.10cm}$ $=x^*\ra ((\alpha\ra x)\ra (\beta\ra y)^*)$ \\
$\hspace*{3.10cm}$ $=x^*\ra ((\beta\ra y)\ra (\alpha\ra x)^*)=(\beta\ra y)\ra (x^*\ra (\alpha\ra x)^*)$ \\
$\hspace*{3.10cm}$ $=(\beta\ra y)\ra ((\alpha\ra x)\ra x)=(\beta\ra y)\ra (\alpha\Cup x)$ \\
$\hspace*{3.10cm}$ $=(\beta\ra y)\ra \alpha\ge_Q (\beta\ra y)\ra y=\beta\Cup y=\beta\ge_Q z$. \\
(Since $x\le_Q \alpha$ implies $\alpha\Cup x=\alpha$, $\alpha\ge_Q y$ implies 
$(\beta\ra y)\ra \alpha\ge_Q (\beta\ra y)\ra y$, and $y\le_Q \beta$ implies $\beta\Cup y=\beta)$. 
Applying Proposition \ref{cqbe-20}$(c)$, it follows that $x\equiv_F z$, that is $\equiv_F$ is transitive, so that 
it is an equivalence relation on $X$. 
\end{proof}

\begin{proposition} \label{cqbe-40} Let $F\in \mathcal{DS}(X)$ and let $x,y\in X$. 
If $x\equiv_F y$, then $x^*\equiv_F y^*$.  
\end{proposition}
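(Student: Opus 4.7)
\smallskip

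\noindent\textbf{Proof proposal.} The plan is to invoke characterization (a) of Proposition \ref{cqbe-20}: pick $\lambda\in X$ witnessing $x\equiv_F y$, so that $x,y\le_Q \lambda$ with $a:=\lambda\ra x\in F$ and $b:=\lambda\ra y\in F$, and then construct an explicit $\mu$ playing the analogous role for $x^*,y^*$. The candidate I will propose is
\[
\mu \; := \; (\lambda\ra y)\ra x^*.
\]

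The first and most delicate step is the symmetric identity $\mu=(\lambda\ra y)\ra x^*=(\lambda\ra x)\ra y^*$. Starting from the left form, Lemma \ref{qbe-10}$(3)$ gives $(\lambda\ra y)\ra x^*=x\ra(\lambda\ra y)^*=x\ra(\lambda\odot y^*)$, and Proposition \ref{qbe-20-10}$(3)$ rewrites this as $((\lambda\ra y)\odot x)^*$. Since $x\le_Q\lambda$ and $y\le_Q\lambda$, Proposition \ref{qbe-20-10}$(2)$ yields $x=a\odot\lambda$ and $y=b\odot\lambda$, so by the commutativity and associativity of $\odot$ (transported from the m-BE structure via Proposition \ref{qmv-50}),
\[
(\lambda\ra y)\odot x \;=\; b\odot a\odot\lambda \;=\; a\odot b\odot\lambda \;=\; (\lambda\ra x)\odot y,
\]
and the reverse manipulations produce the right form $(\lambda\ra x)\ra y^*$.

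Once both forms of $\mu$ are available, Proposition \ref{qbe-60}$(2)$ (``$x\le_Q y\ra x$'') applied to each form directly gives $x^*\le_Q(\lambda\ra y)\ra x^*=\mu$ and $y^*\le_Q(\lambda\ra x)\ra y^*=\mu$. For the remaining pair of memberships, I unfold via the definition $u\Cup v=(u\ra v)\ra v$: using the two forms of $\mu$,
\[
\mu\ra x^* \;=\; (\lambda\ra y)\Cup x^*, \qquad \mu\ra y^* \;=\; (\lambda\ra x)\Cup y^*.
\]
Since Proposition \ref{fqbe-80-20} gives $\mathcal{DS}(X)=\mathcal{DS}_p(X)$, condition $(F_3)$ is available; applied to $\lambda\ra y\in F$ (with $x^*$) and to $\lambda\ra x\in F$ (with $y^*$), it yields both elements above in $F$. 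Hence $\mu$ witnesses Definition \ref{cqbe-10}, and $x^*\equiv_F y^*$.

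The main obstacle is choosing $\mu$. Obvious $\le_Q$-upper bounds of $\{x^*,y^*\}$ such as $x\ra y^*=y\ra x^*$ (from Proposition \ref{qbe-60}$(2)$) fail because $(x\ra y^*)\ra x^*=x\ra(x\odot y)$ is not visibly in $F$. Joins like $x^*\Cup y^*$ or $y^*\Cup x^*$ also fail: non-commutativity forbids these from being equal, and Proposition \ref{qbe-60}$(7)$ gives only $y^*\le_Q x^*\Cup y^*$ (resp. $x^*\le_Q y^*\Cup x^*$), so neither covers both $x^*$ and $y^*$ in $\le_Q$. The resolution is to absorb the witness $\lambda$ into $\mu$: the two equivalent forms of $\mu$ deliver the two $\le_Q$-inequalities via Proposition \ref{qbe-60}$(2)$, and at the same time make $\mu\ra x^*$ and $\mu\ra y^*$ into $\Cup$-expressions with elements of $F$, exactly matching $(F_3)$.
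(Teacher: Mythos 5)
Your proof is correct, and in fact your witness is literally the paper's: the paper takes $\gamma:=x\ra (\alpha\odot y^*)$, and since $\alpha\odot y^*=(\alpha\ra y)^*$ and $x\ra (\alpha\ra y)^*=(\alpha\ra y)\ra x^*$ (Lemma \ref{qbe-10}$(3)$), this is exactly your $\mu$ with $\lambda=\alpha$. The difference lies in how the four conditions of Definition \ref{cqbe-10} are verified. The paper argues asymmetrically: $\gamma\ge_Q x^*$ and $\gamma\ra x^*=(\alpha\ra y)\Cup x^*\in F$ come out exactly, as in your argument, but $\gamma\ge_Q y^*$ is obtained by a monotonicity chain from $x\le_Q\alpha$, and $\gamma\ra y^*$ is only bounded below, $\gamma\ra y^*\ge_Q ((\alpha\ra x)\odot(\alpha\ra y))\Cup y^*\in F$, with membership then following from $\le_Q$-upward closure of $F$. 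You instead first establish the exact symmetric identity $(\lambda\ra y)\ra x^*=(\lambda\ra x)\ra y^*$, via $x=(\lambda\ra x)\odot\lambda$ and $y=(\lambda\ra y)\odot\lambda$ (Proposition \ref{qbe-20-10}$(2)$) together with commutativity and associativity of $\odot$ --- a legitimate tool here, which the paper itself invokes in the proof of Proposition \ref{cqbe-50}; the two forms of $\mu$ then give both $\le_Q$-inequalities from Proposition \ref{qbe-60}$(2)$ and both memberships on the nose as $(\lambda\ra y)\Cup x^*$ and $(\lambda\ra x)\Cup y^*$, using $(F_3)$ (available since $\mathcal{DS}(X)=\mathcal{DS}_p(X)$). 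Your route is shorter and makes the $x\leftrightarrow y$ symmetry of the witness explicit; the paper's route reaches the same conclusion with two longer, inequality-based computations and does not isolate the symmetric identity, which is arguably the cleanest way to see why this particular witness works.
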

\begin{proof}
Since $x\equiv_F y$, there exists $\alpha\in X$ such that $x,y\le_Q \alpha$ and $\alpha\ra x, \alpha\ra y\in F$. 
It follows that $x^*\ge_Q \alpha^*$ and $y^*\ge_Q \alpha^*$. 
Take $\gamma:=x\ra (\alpha\odot y^*)$, so that, by Proposition \ref{qbe-60}$(2)$, $\gamma\ge_Q x^*$. 
From $x\le_Q \alpha$, by Proposition \ref{qbe-70}$(3)$ we get: \\
$\hspace*{2.00cm}$ $\gamma=x\ra (\alpha\odot y^*)\ge_Q \alpha\ra (\alpha\odot y^*)
                          =(\alpha\Cup y)\ra (\alpha\odot y^*)$ \\
$\hspace*{2.30cm}$ $=((\alpha\ra y)\ra y)\ra (\alpha\odot y^*)=(y^*\ra (\alpha\ra y)^*)\ra (\alpha\odot y^*)$ \\
$\hspace*{2.30cm}$ $=(y^*\ra (\alpha\odot y^*))\ra (\alpha\odot y^*)=y^*\Cup (\alpha\odot y^*)=y^*$. \\
(Since $y\le_Q \alpha$ implies $\alpha=\alpha\Cup y$, and $\alpha\odot y^*\le_Q y^*$ 
implies $y^*=y^*\Cup (\alpha\odot y^*)$). Hence $\gamma\ge_Q y^*$. 
We also have: \\
$\hspace*{2.00cm}$ $\gamma\ra x^*=(x\ra (\alpha\odot y^*))\ra x^*=((\alpha\odot y^*)^*\ra x^*)\ra x^*$ \\
$\hspace*{3.30cm}$ $=(\alpha\odot y^*)^*\Cup x^*=(\alpha\ra y)\Cup x^*\in F$, \\
since $\alpha\ra y\in F$, and $F\in \mathcal{DS}(X)$. \\
By Proposition \ref{qbe-20}$(2)$,$(3)$, $x\le_Q \alpha$ implies $x=(\alpha\ra x)\odot \alpha$ and 
$x\ra (\alpha\odot y^*)=((\alpha\ra y)\odot x)^*$. 
From $\alpha\ge_Q y$ we get: \\  
$\hspace*{2.00cm}$ $(\alpha\ra x)\odot (\alpha\ra y)\odot \alpha \ge_Q (\alpha\ra x)\odot (\alpha\ra y)\odot y$ and \\ 
$\hspace*{2.00cm}$ $((\alpha\ra x)\odot (\alpha\ra y)\odot \alpha)^*\ra y^* 
                   \ge_Q ((\alpha\ra x)\odot (\alpha\ra y)\odot y)^*\ra y^*$. \\
It follows that: \\
$\hspace*{2.00cm}$ $\gamma\ra y^*=(x\ra (\alpha\odot y^*))\ra y^*=((\alpha\ra y)\odot x)^*\ra y^*$ \\
$\hspace*{3.30cm}$ $=((\alpha\ra y)\odot (\alpha\ra x)\odot \alpha)^*\ra y^*$ \\
$\hspace*{3.30cm}$ $\ge_Q ((\alpha\ra x)\odot (\alpha\ra y)\odot y)^*\ra y^*$ \\
$\hspace*{3.30cm}$ $=(((\alpha\ra x)\odot (\alpha\ra y))\ra y^*)\ra y^*$ \\
$\hspace*{3.30cm}$ $=((\alpha\ra x)\odot (\alpha\ra y))\Cup y^*\in F$. \\
since $(\alpha\ra x)\odot (\alpha\ra y)\in F$, and $F\in \mathcal{DS}(X)$.  
We proved that there exists $\gamma\in X$ such that $x^*, y^*\le_Q \gamma$, and 
$\gamma\ra x^*, \gamma\ra y^*\in F$, hence $x^*\equiv_F y^*$. 
\end{proof}

\begin{proposition} \label{cqbe-50} Let $F\in \mathcal{DS}(X)$ and let $x,y\in X$. 
If $x\equiv_F y$ and $u\equiv_F v$, then $x\odot u\equiv_F y\odot v$.  
\end{proposition}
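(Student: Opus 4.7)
The plan is to leverage the transitivity of $\equiv_F$ (Proposition \ref{cqbe-30}) to reduce the two-variable statement to its one-sided form: for every $u\in X$, $x\equiv_F y$ implies $x\odot u\equiv_F y\odot u$. Assuming that reduction, the conclusion follows from the chain
\[
x\odot u\;\equiv_F\; y\odot u\;\equiv_F\; y\odot v,
\]
where the second step is the one-sided statement applied to $u\equiv_F v$ (together with the commutativity of $\odot$ coming from Proposition \ref{qmv-50}).

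To carry out the one-sided step, I will pick a witness $\alpha\in X$ for $x\equiv_F y$, so that $x,y\le_Q \alpha$ and $\alpha\ra x,\,\alpha\ra y\in F$, and take $\lambda:=\alpha\odot u$ as the candidate witness for $x\odot u\equiv_F y\odot u$. The comparabilities $x\odot u\le_Q \lambda$ and $y\odot u\le_Q \lambda$ are immediate from the $\le_Q$-monotonicity of $\odot$ in the first argument (Proposition \ref{qbe-120}(5)).

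The algebraic heart of the argument is the identity
\[
x\odot u \;=\; (\alpha\ra x)\odot(\alpha\odot u) \;=\; (\alpha\ra x)\odot\lambda,
\]
obtained by combining Proposition \ref{qbe-20-10}(2) (which gives $(\alpha\ra x)\odot\alpha=x$ since $x\le_Q\alpha$) with the associativity of $\odot$ in the term-equivalent QMV algebra (Proposition \ref{qmv-50}). From the trivial inequality $(\alpha\ra x)\odot\lambda\le x\odot u$ and Proposition \ref{qbe-120}(3) I then get $\alpha\ra x\le \lambda\ra(x\odot u)$. Since $\alpha\ra x\in F$ and $F\in\mathcal{DS}(X)$ satisfies property $(F_4)$ by Proposition \ref{fqbe-80}, this yields $\lambda\ra(x\odot u)\in F$. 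Running the symmetric computation with $y$ in place of $x$ gives $\lambda\ra(y\odot u)\in F$, and together these show $\lambda$ witnesses $x\odot u\equiv_F y\odot u$.

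The main obstacle is less conceptual than careful bookkeeping between the two products $\ra$ and $\odot$, and between the two orders $\le$ and $\le_Q$. The definition of $\equiv_F$ is anchored in $\le_Q$, which is exactly what is needed to invoke Proposition \ref{qbe-20-10}(2) and rewrite $x$ as $(\alpha\ra x)\odot\alpha$; but at the last step we only need the weaker $\le$ in order to apply $(F_4)$, which is why Proposition \ref{qbe-120}(3) suffices. If instead one tried to work entirely inside $\le_Q$, one would run into the delicate question of $\le_Q$-monotonicity of $\ra$ in its right argument, which is available (Proposition \ref{qbe-70}(3)) but unnecessary here.
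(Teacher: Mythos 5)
Your argument is correct, and it is organized differently from the paper's. The paper proves the two-variable statement in one stroke: it takes witnesses $\alpha,\beta$ for the two congruences, sets $\gamma:=\alpha\odot\beta$, and verifies by a direct computation (using commutativity, associativity, and $(\alpha\ra x)\odot\alpha=x$, $(\beta\ra u)\odot\beta=u$ from Proposition \ref{qbe-20-10}(2)) that $\bigl((\alpha\ra x)\odot(\beta\ra u)\bigr)\ra\bigl(\gamma\ra(x\odot u)\bigr)=1$; it then concludes $\gamma\ra(x\odot u)\in F$ via $(F_1)$ and modus ponens $(DS_2)$, and symmetrically for $y\odot v$. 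You instead factor the statement into a one-sided congruence step ($x\equiv_F y$ implies $x\odot u\equiv_F y\odot u$, with witness $\lambda=\alpha\odot u$) and glue the two one-sided steps with the transitivity of $\equiv_F$ from Proposition \ref{cqbe-30}; and within the one-sided step you conclude membership in $F$ by residuation, Proposition \ref{qbe-120}(3), plus upward closure $(F_4)$, rather than by the paper's computation-to-$1$ plus $(DS_2)$. The algebraic core is the same in both proofs (Proposition \ref{qbe-20-10}(2) together with associativity and commutativity of $\odot$), but your decomposition is arguably cleaner to check, avoids invoking $(F_1)$, and isolates a reusable one-sided compatibility lemma; the price is an extra appeal to transitivity (harmless here, since Proposition \ref{cqbe-30} precedes this statement), whereas the paper's single-witness computation is self-contained and handles both coordinates simultaneously. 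One small point of rigor: your ``trivial inequality'' $(\alpha\ra x)\odot\lambda\le x\odot u$ is really the equality $x\odot u=(\alpha\ra x)\odot\lambda$ you just derived, read through reflexivity of $\le$; stating it that way would make the application of Proposition \ref{qbe-120}(3) completely explicit.
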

\begin{proof}
By hypothesis there exist $\alpha,\beta\in X$ such that $x,y\le_Q \alpha$, $u,v\le_Q \beta$ and 
$\alpha\ra x, \alpha\ra y, \beta\ra u, \beta\ra v\in F$. 
Take $\gamma:=\alpha\odot \beta$, and we can easily check that $x\odot u, y\odot v\le_Q \gamma$. 
Using the commutativity and associativity of $\odot$, as well as the Proposition \ref{qbe-20-10}, we get: \\ 
$\hspace*{1.50cm}$ $((\alpha\ra x)\odot (\beta\ra u))\ra (\gamma\ra (x\odot u))=$ \\
$\hspace*{4.00cm}$ $=((\alpha\ra x)\odot (\beta\ra u))\ra ((\alpha\odot \beta)\ra (x\odot u))$ \\
$\hspace*{4.00cm}$ $=(\alpha\ra x)\odot (\beta\ra u)\ra ((\alpha\odot \beta)\odot (x\odot u)^*)^*$ \\           $\hspace*{4.00cm}$ $=[(\alpha\ra x)\odot (\beta\ra u)\odot (\alpha\odot \beta)\odot (x\odot u)^*]^*$ \\         $\hspace*{4.00cm}$ $=[((\alpha\ra x)\odot \alpha)\odot ((\beta\ra u)\odot \beta)\odot (x\odot u)^*]^*$ \\           $\hspace*{4.00cm}$ $=[(x\odot u)\odot (x\odot u)^*]^*
                    =(x\odot u)\ra (x\odot u)=1$. \\                   
From $(\alpha\ra x)\odot (\beta\ra u)\ra (\gamma\ra (x\odot u))=1\in F$,  
$(\alpha\ra x)\odot (\beta\ra u)\in F$ and $F\in \mathcal{DS}(X)$, we get $\gamma\ra (x\odot u)\in F$. 
Similarly $\gamma\ra (y\odot v)\in F$, hence $x\odot u\equiv_F y\odot v$.  
\end{proof}

\begin{theorem} \label{cqbe-60} For any $F\in \mathcal{DS}(X)$, there exists a congruence relation $\equiv_F$ 
on X such that $\{x\in X\mid x\equiv_F 1\}=F$. Conversely, given a congruence relation $\equiv$ on $X$, then 
$\{x\in X\mid x\equiv 1\}\in \mathcal{DS}(X)$. 
\end{theorem}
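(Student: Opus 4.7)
The plan is to split the proof along the biconditional structure. For the forward direction, given $F\in\mathcal{DS}(X)$, the relation $\equiv_F$ of Definition~\ref{cqbe-10} has already been shown to be an equivalence relation in Proposition~\ref{cqbe-30}, compatible with $^*$ in Proposition~\ref{cqbe-40}, and compatible with $\odot$ in Proposition~\ref{cqbe-50}. To conclude that $\equiv_F$ is a congruence on the QW algebra $(X,\ra,^*,1)$, I still need compatibility with $\ra$. I would derive this from the identity $x\ra y=(x\odot y^*)^*$, valid in any involutive BE algebra: if $x\equiv_F x'$ and $y\equiv_F y'$, then $y^*\equiv_F (y')^*$ by $^*$-compatibility, hence $x\odot y^*\equiv_F x'\odot (y')^*$ by $\odot$-compatibility, and applying $^*$-compatibility once more gives $x\ra y\equiv_F x'\ra y'$.

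Next I would identify the $1$-class with $F$. If $x\equiv_F 1$, then by symmetry $1\equiv_F x$; since $1\in F$ by $(DS_1)$, Lemma~\ref{cqbe-20-10}(2) gives $x\in F$. Conversely, given $x\in F$, I would exhibit $\lambda:=1$ as the witness required in Definition~\ref{cqbe-10}: we have $x,1\le_Q 1$ by Proposition~\ref{qbe-20}(5), while $1\ra x=x\in F$ and $1\ra 1=1\in F$ (using $1\in F$). This completes the equality $\{x\in X\mid x\equiv_F 1\}=F$.

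For the converse, given a congruence $\equiv$ on $X$, let $F:=\{x\in X\mid x\equiv 1\}$. Reflexivity gives $1\in F$, so $(DS_1)$ holds. For $(DS_2)$, suppose $x,\,x\ra y\in F$, meaning $x\equiv 1$ and $x\ra y\equiv 1$. By compatibility of $\equiv$ with $\ra$ together with $1\ra y=y$, we obtain $x\ra y\equiv 1\ra y=y$; transitivity with $x\ra y\equiv 1$ yields $y\equiv 1$, hence $y\in F$. Therefore $F\in\mathcal{DS}(X)$.

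There is no genuine obstacle here: the substantive content — transitivity of $\equiv_F$ and compatibility with the unary and binary operations $^*$ and $\odot$ — is packaged in Propositions~\ref{cqbe-30}--\ref{cqbe-50}. The only step not already isolated is upgrading $\odot$-compatibility to $\ra$-compatibility, which is immediate from $x\ra y=(x\odot y^*)^*$ in the involutive setting. The rest is a direct verification of the deductive-system axioms against the definition of the $1$-class.
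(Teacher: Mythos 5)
Your proposal is correct and follows essentially the same route as the paper: the congruence property is obtained exactly as in the paper by combining Propositions~\ref{cqbe-30}, \ref{cqbe-40} and \ref{cqbe-50} through the identity $x\ra y=(x\odot y^*)^*$. Your remaining verifications differ only in minor bookkeeping — you identify the $1$-class via Lemma~\ref{cqbe-20-10}(2) and the witness $\lambda=1$ rather than via Proposition~\ref{cqbe-20}(b), and in the converse you check $(DS_1)$--$(DS_2)$ directly instead of $(F_1)$, $(F_3)$ with Proposition~\ref{fqbe-80} — and all of these steps are valid.
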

\begin{proof}
Let $F\in \mathcal{DS}(X)$. According to Proposition \ref{cqbe-30}, $\equiv_F$ is an equivalence relation on $X$. 
We show that $x\equiv_F y$ and $u\equiv_F v$ imply $x\ra u\equiv_F y\ra v$.  
By Proposition \ref{cqbe-40}, $u^*\equiv_F v^*$, and applying Proposition \ref{cqbe-50}, $x\equiv_F y$ and 
$u^*\equiv_F v^*$ imply $x\odot u^*\equiv_F y\odot v^*$. 
Using again Proposition \ref{cqbe-40}, we get $(x\odot u^*)^*\equiv_F (y\odot v^*)^*$, 
that is $x\ra u\equiv_F y\ra v$. 
Hence $\equiv_F$ is a congruence on $X$. \\
Let $x\in \{x\in X\mid x\equiv_F 1\}$, so that there exist $\alpha,\beta\in F$ such that $x\le_Q \alpha$, 
$1\le_Q \beta$ and $\alpha\ra x=\beta \ra 1=1$. 
Since $x\le_Q \alpha$, we have $\alpha\Cup x=\alpha$, hence 
$x=1\ra x=(\alpha\ra x)\ra x=\alpha\Cup x=\alpha\in F$. 
Thus $\{x\in X\mid x\equiv_F 1\}\subseteq F$. 
If $x\in F$, for $\alpha:=x$, $\beta:=1$ we have $\alpha,\beta \in F$, $x\le_Q \alpha$, $1\le_Q \beta$, 
and $\alpha\ra x=\beta\ra 1=1$. 
Hence $x\equiv_F 1$, so that $F\subseteq \{x\in X\mid x\equiv_F 1\}$. 
It follows that $\{x\in X\mid x\equiv_F 1\}= F$. \\
Conversely, let $\equiv$ be a congruence on $X$, and let $F=\{x\in X\mid x\equiv 1\}$. 
If $x,y \in F$, then $x\equiv 1$ and $y\equiv 1$. 
From $y\equiv 1$ and $0\equiv 0$ we have $y\ra 0\equiv 1\ra 0$, that is $y^*\equiv 0$. 
Moreover $x\equiv 1$ and $y^*\equiv 0$ imply $x\ra y^*\equiv 1\ra 0=0$, while 
$x\ra y^*\equiv 0$ and $0\equiv 0$ imply $(x\ra y^*)\ra 0\equiv 0\ra 0=1$. 
Hence $x\odot y=(x\ra y^*)^*\equiv 1$, that is $x\odot y\in F$ and condition $(F_1)$ is satisfied. 
Let $x\in F$ and $y\in X$, so that $x\equiv 1$. 
From $x\equiv 1$ and $y\equiv y$ we get $x\ra y\equiv 1\ra y=y$. 
Moreover $x\ra y\equiv y$ and $y\equiv y$ imply $(x\ra y)\ra y\equiv y\ra y=1$. 
It follows that $x\Cup y\equiv 1$, hence $x\Cup y\in F$, that is condition $(F_3)$ is also verified. 
Taking into consideration Proposition \ref{fqbe-80}, we conclude that $F \in \mathcal{DS}(X)$.
\end{proof}

\begin{corollary} \label{cqbe-70} For any $F\in \mathcal{DS}(X)$, $x\equiv_F y$ and $u\equiv_F v$ imply 
$x\Cup u\equiv_F y\Cup v$ and $x\Cap u\equiv_F y\Cap v$. 
\end{corollary}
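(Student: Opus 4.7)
The plan is to reduce both assertions to congruence properties that have already been established, namely that $\equiv_F$ respects $\ra$ (as shown in the proof of Theorem \ref{cqbe-60}) and that it respects $^*$ (Proposition \ref{cqbe-40}). Since both $\Cup$ and $\Cap$ are built from $\ra$ and $^*$, the corollary should follow by routine substitution, without needing any new property of the relation $\equiv_F$.

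For the $\Cup$-part, I would unfold the definition $x\Cup u=(x\ra u)\ra u$. From $x\equiv_F y$ and $u\equiv_F v$, the $\ra$-compatibility established in Theorem \ref{cqbe-60} first yields $x\ra u\equiv_F y\ra v$; combining this with $u\equiv_F v$ and applying the $\ra$-compatibility once more produces $(x\ra u)\ra u\equiv_F (y\ra v)\ra v$, i.e.\ $x\Cup u\equiv_F y\Cup v$.

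For the $\Cap$-part, I would exploit the de Morgan-type identity $x\Cap u=(x^*\Cup u^*)^*$ from Proposition \ref{qbe-20}(3). By Proposition \ref{cqbe-40}, the hypotheses give $x^*\equiv_F y^*$ and $u^*\equiv_F v^*$. Invoking the $\Cup$-part just proved, one obtains $x^*\Cup u^*\equiv_F y^*\Cup v^*$, and a final application of Proposition \ref{cqbe-40} converts this into $(x^*\Cup u^*)^*\equiv_F (y^*\Cup v^*)^*$, which is exactly $x\Cap u\equiv_F y\Cap v$.

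There is no substantial obstacle: the argument is essentially bookkeeping once the compatibility of $\equiv_F$ with $\ra$ and with $^*$ is in hand. The only small point worth keeping in mind is that Theorem \ref{cqbe-60} verifies the $\ra$-compatibility directly (rather than only the compatibility with $\odot$), so the first step can be applied twice in succession with no extra work, and the $\Cap$-case then reduces to the $\Cup$-case by a conjugation by $^*$.
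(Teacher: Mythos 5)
Your proof is correct and is exactly the argument the paper intends: the corollary is stated without proof as an immediate consequence of Theorem \ref{cqbe-60} (compatibility of $\equiv_F$ with $\ra$) together with Proposition \ref{cqbe-40} (compatibility with $^*$), since $\Cup$ and $\Cap$ are term operations in $\ra$ and $^*$. Your two-step application of $\ra$-compatibility for $\Cup$ and the reduction of $\Cap$ via $x\Cap u=(x^*\Cup u^*)^*$ is precisely this routine verification.
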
 

\noindent
The quotient QW algebra induced by $\equiv_F$ is denoted by $X/F$. \\
\noindent
For any $x\in X$, the smallest $n\in \mathbb{N}$ such that $x^n=0$ is called the \emph{order} of $x$, and 
it is denoted by $\ord(x)$. If there is no such $n$, then $\ord(x)=\infty$. 
A QW algebra $X$ is called \emph{locally finite} if any $x\in X$, $x\ne 1$, has a finite order. 

\begin{theorem} \label{cqbe-80} Let $F\in \mathcal{DS}(X)$. Then the following are equivalent: \\
$(a)$ $F$ is strongly maximal; \\
$(b)$ $X/F$ is locally finite.  
\end{theorem}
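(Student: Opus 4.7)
The plan is to reduce both directions to translating the strong maximality condition through the congruence $\equiv_F$, using the fact established in Theorem \ref{cqbe-60} that $F=\{x\in X\mid x\equiv_F 1\}$, together with the fact that $\equiv_F$ is compatible with $\odot$ and ${}^*$.

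First I would record two easy observations. Observation~1: for any $x\in X$ and any $n\ge 1$, in $X/F$ we have $[x^n]=[x]^n$, which is immediate from the compatibility of $\equiv_F$ with $\odot$ (Proposition~\ref{cqbe-50}). Observation~2: for any $x\in X$, $x^n\equiv_F 0$ if and only if $(x^n)^*\in F$. Indeed, by Proposition~\ref{cqbe-40}, $x^n\equiv_F 0$ forces $(x^n)^*\equiv_F 0^*=1$, which by Theorem~\ref{cqbe-60} gives $(x^n)^*\in F$; conversely, $(x^n)^*\in F$ means $(x^n)^*\equiv_F 1$, and applying Proposition~\ref{cqbe-40} once more together with $x^{**}=x$ yields $x^n\equiv_F 0$. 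In the quotient, this says $[x]^n=[0]$ iff $(x^n)^*\in F$.

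With these in hand, the direction $(a)\Rightarrow (b)$ is routine. Take any $[x]\in X/F$ with $[x]\ne[1]$. By Theorem~\ref{cqbe-60}, $[x]\ne [1]$ means $x\notin F$, so strong maximality of $F$ provides $n\ge 1$ with $(x^n)^*\in F$. By Observation~2, $[x]^n=[x^n]=[0]$, so $\ord([x])\le n<\infty$. Thus $X/F$ is locally finite. Conversely, for $(b)\Rightarrow (a)$, let $x\in X$ with $x\notin F$. Then $[x]\ne[1]$ in $X/F$, so by local finiteness there exists $n\ge 1$ with $[x]^n=[0]$, i.e.\ $[x^n]=[0]$. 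Again by Observation~2, $(x^n)^*\in F$, which is exactly the strong maximality condition.

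The main obstacle is essentially bookkeeping: being sure that all the needed translations between $X$ and $X/F$ are valid, in particular that $[x]=[1]$ really is equivalent to $x\in F$ (which is Theorem~\ref{cqbe-60}) and that $[0]$ behaves as the zero in $X/F$ so that $[x]^n=[0]$ is a legitimate definition of $\ord([x])\le n$. Once these are verified, no genuine computation is required; the result follows from the dualization $x\equiv_F 0 \Leftrightarrow x^*\equiv_F 1$ supplied by Proposition~\ref{cqbe-40}.
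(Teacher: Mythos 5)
Your proposal is correct and follows essentially the same route as the paper: both directions reduce to the equivalences $x\notin F \Leftrightarrow x\not\equiv_F 1$ (Theorem \ref{cqbe-60}) and $(x^n)^*\equiv_F 1 \Leftrightarrow x^n\equiv_F 0$ (compatibility of $\equiv_F$ with $^*$, Proposition \ref{cqbe-40}). Your explicit bookkeeping of $[x^n]=[x]^n$ and $[x]^n=[0]$ only spells out what the paper's proof leaves implicit.
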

\begin{proof}
$(a) \Rightarrow (b)$ Let $F$ be a strongly maximal deductive system of $X$, and let $x\not\equiv_F 1$, 
so that $x\notin F$. 
It follows that there exists $n\ge 1$ such that $(x^n)^*\in F$, hence $(x^n)^* \equiv_F 1$. 
Thus $x^n\equiv_F 0$, that is $X/F$ is locally finite. \\
$(b) \Rightarrow (a)$ Suppose that $X/F$ is locally finite, and let $x\notin F$, that is $x\not\equiv _F 1$. 
Hence there is $n\ge 1$ such that $x^n\equiv_F 0$, so that $(x^n)^*\equiv_F 1$. 
It follows that $(x^n)^*\in F$, thus $F$ is strongly maximal. 
\end{proof}

$\vspace*{5mm}$

\section{On the linearity of quantum-Wajsberg algebras}

We investigate the linearity of quantum-Wajsberg algebras by redefining the notion of weakly linearity introduced 
by R. Giuntini in \cite{Giunt6} for quantum-MV algebras. 
We also prove certain properties of weakly linear QW algebras, and define the notion of prime deductive systems. 
Furthermore, we prove that the quotient of a quantum-Wajsberg algebra with respect to a deductive system is weakly linear if and only if the deductive system is prime. 
Finally, we define the quasi-linear quantum-Wajsberg algebras, and give equivalent conditions for this notion.  
In what follows, $X$ will be a quantum-Wajsberg algebra, unless otherwise stated. \\
We can see that, in the case of proper QW algebras, the poset $(X,\le_Q)$ is not linearly ordered. 
Indeed, suppose that, for all $x,y\in X$, $x\le_Q y$ or $y\le_Q x$. 
Then by Proposition \ref{qbe-20}$(1)$, we get $x\Cap y=y\Cap x=x$ or $y\Cap x=x\Cap y=y$, respectively. 
It is a contradiction, since, in general, the operations $\Cup$ and $\Cap$ are not commutative ($(X,\Cap,\Cup)$ 
is not a lattice). 

\begin{definition} \label{wlqbe-10}
\emph{
A quantum-Wajsberg algebra $X$ is \emph{weakly linear} if it is linearly ordered with respect to $\le$, that is, 
for all $x,y\in X$, $x\ra y=1$ or $y\ra x=1$.  
}
\end{definition}

\begin{proposition} \label{wlqbe-20} Given a QW algebra $X$, the following are equivalent: \\ 
$(a)$ $X$ is weakly linear; \\
$(b)$ for all $x,y\in X$, $x\Cup y=y$ or $y\Cup x=x;$ \\
$(c)$ for all $x,y\in X$, $x\Cap y=y$ or $y\Cap x=x$. 
\end{proposition}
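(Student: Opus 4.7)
The plan is to establish the cycle $(a)\Rightarrow(b)\Rightarrow(a)$ together with the equivalence $(a)\Leftrightarrow(c)$; combining these yields all three equivalences. Each direction is short and uses only identities already recorded in Section 2, so no new machinery is needed.

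For $(a)\Rightarrow(b)$, take any $x,y\in X$. Weak linearity supplies $x\ra y=1$ or $y\ra x=1$. In the first case, the defining equation $x\Cup y=(x\ra y)\ra y$ together with $(BE_3)$ yields $x\Cup y=1\ra y=y$; the symmetric case gives $y\Cup x=x$. For $(b)\Rightarrow(a)$, I would invoke Lemma \ref{qbe-10}$(2)$, which records $x\le (x\ra y)\ra y=x\Cup y$. Hence $x\Cup y=y$ forces $x\le y$, i.e.\ $x\ra y=1$; analogously $y\Cup x=x$ gives $y\ra x=1$. Thus $X$ is weakly linear.

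For $(a)\Leftrightarrow(c)$, the key tool is Proposition \ref{qbe-60}$(10)$: $x\le y$ iff $y\Cap x=x$. Assuming $(a)$, the case $x\ra y=1$ (i.e.\ $x\le y$) gives $y\Cap x=x$, and the symmetric case gives $x\Cap y=y$. Conversely, $y\Cap x=x$ yields $x\le y$ and so $x\ra y=1$, while $x\Cap y=y$ yields $y\le x$ and so $y\ra x=1$; hence $(c)\Rightarrow(a)$.

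There is no serious obstacle here; the only point requiring care is the order of the arguments in Proposition \ref{qbe-60}$(10)$, since $\Cap$ is not assumed commutative, so one must match $x\Cup y=y$ with $x\le y$ and $x\Cap y=y$ with $y\le x$. As an alternative route, $(b)\Leftrightarrow(c)$ could also be extracted directly from the de Morgan identity $x\Cap y=(x^*\Cup y^*)^*$ of Proposition \ref{qbe-20}$(3)$ combined with involutivity, but the triangle through $(a)$ is arguably more transparent and keeps the proof uniform.
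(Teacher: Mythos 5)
Your proof is correct, and its skeleton is the same as the paper's: the cycle $(a)\Leftrightarrow(b)$ plus $(a)\Leftrightarrow(c)$, with the direction $(a)\Rightarrow(b)$ argued identically from $x\Cup y=(x\ra y)\ra y$ and $1\ra y=y$. Where you diverge is in the supporting lemmas. For $(b)\Rightarrow(a)$ the paper invokes Proposition \ref{qbe-60}$(9)$, i.e.\ $(x\Cup y)\ra y=x\ra y$, to get $x\ra y=y\ra y=1$, whereas you use the purely BE-algebraic fact $x\le (x\ra y)\ra y$ of Lemma \ref{qbe-10}$(2)$ and then substitute $x\Cup y=y$; your version is marginally more elementary since it needs nothing specific to QW algebras. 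For $(a)\Leftrightarrow(c)$ the paper handles the two directions separately: $(a)\Rightarrow(c)$ by unfolding $y\Cap x=((y^*\ra x^*)\ra x^*)^*=((x\ra y)\ra x^*)^*$ and $x\Cap y=((y\ra x)\ra y^*)^*$, and $(c)\Rightarrow(a)$ via Proposition \ref{qbe-60}$(1)$, $x\ra (y\Cap x)=x\ra y$; you instead route both directions through the single recorded equivalence $x\le y$ iff $y\Cap x=x$ of Proposition \ref{qbe-60}$(10)$, which shortens the argument and makes the asymmetry of $\Cap$ (matching $x\ra y=1$ with $y\Cap x=x$ and $y\ra x=1$ with $x\Cap y=y$) explicit --- exactly the pairing the paper's computations produce. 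Your side remark that $(b)\Leftrightarrow(c)$ also follows from $x\Cap y=(x^*\Cup y^*)^*$ (Proposition \ref{qbe-20}$(3)$) together with involutivity is likewise sound, since every element is of the form $z^*$. In short: same decomposition, slightly leaner choice of lemmas, no gaps.
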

\begin{proof}
$(a)\Rightarrow (b)$ Assume that $X$ is weakly linear and let $x,y\in X$. 
If $x\ra y=1$, then $x\Cup y=(x\ra y)\ra y=y$, while $y\ra x=1$ implies $y\Cup x=(y\ra x)\ra x=x$. \\
$(b)\Rightarrow (a)$ Let $x,y \in X$. 
If $x\Cup y=y$, since by Proposition \ref{qbe-60}$(9)$, $(x\Cup y)\ra y=x\ra y$, we get $x\ra y=y\ra y=1$. 
Similarly, if $y\Cup x=x$, from $(y\Cup x)\ra x=y\ra x$ we have $y\ra x=1$. \\
$(a)\Rightarrow (c)$ Let $X$ be weakly linear and let $x,y\in X$. 
If $x\ra y=1$, we have $y\Cap x=((y^*\ra x^*)\ra x^*)^*=((x\ra y)\ra x^*)^*=x$, and similarly 
$y\ra x=1$ implies $x\Cap y=((y\ra x)\ra y^*)^*=y$. \\
$(c)\Rightarrow (a)$ Consider $x,y\in X$. 
If $x\Cap y=y$, from $y\ra (x\Cap y)=y\ra x$ (by Proposition \ref{qbe-60}$(1)$) we get $y\ra x=1$. 
Similarly, from $y\Cap x=x$, using $x\ra (y\Cap x)=x\ra y$ we have $x\ra y=1$. 
\end{proof}

\begin{proposition} \label{wlqbe-30} Let $X$ be a weakly linear  QW algebra. 
The following hold for all $x,y,z\in X$: \\ 
$(1)$ $((x\ra y)\ra z)\Cap ((y\ra x)\ra z)=z;$ \\
$(2)$ $x\Cap y=0$ implies $(z\ra x)\Cap (z\ra y)=z^*;$ \\
$(3)$ $x\Cap (y\Cap z)=0$ implies $x\Cap (z\Cap y)=0;$ \\
$(4)$ $x\sim y$ and $x\Cup y=1$ imply $x=y=1$.    
\end{proposition}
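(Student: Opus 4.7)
The plan is to exploit the case-split provided by Proposition \ref{wlqbe-20}, which gives three equivalent formulations of weak linearity: in terms of $\ra$, in terms of $\Cup$, and in terms of $\Cap$. Each part becomes a short case analysis using the appropriate form, combined with the symmetry identities of Proposition \ref{qbe-100} and the absorption properties of $\le_Q$ recorded in Proposition \ref{qbe-20}.

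For (1), I would split on $x\ra y=1$ versus $y\ra x=1$. If $x\ra y=1$, then $(x\ra y)\ra z=z$ and the target expression collapses to $z\Cap((y\ra x)\ra z)$; since Proposition \ref{qbe-60}(2) gives $z\le_Q (y\ra x)\ra z$, the definition of $\le_Q$ immediately yields that this meet equals $z$. The case $y\ra x=1$ is symmetric, invoking Proposition \ref{qbe-20}(1) so that $((x\ra y)\ra z)\Cap z$ absorbs in the reversed operand order.

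For (2) and (3), the key preliminary observation is that weak linearity forces a ``no zero divisors'' property for $\Cap$: if $x\Cap y=0$, then Proposition \ref{qbe-100}(3) gives $y\Cap x=0$, while the $\Cap$-form of weak linearity asserts $x\Cap y=y$ or $y\Cap x=x$, so one of $x,y$ must already be $0$. For (2), this reduces to two symmetric subcases; if $y=0$, the target becomes $(z\ra x)\Cap z^*$, which equals $z^*$ because $z^*\le_Q z\ra x$ by Proposition \ref{qbe-60}(2). For (3), apply the no-zero-divisors observation to $x\Cap(y\Cap z)=0$: either $x=0$, whence $x\Cap(z\Cap y)=0$ by Proposition \ref{qbe-20}(6); or $y\Cap z=0$, whence $z\Cap y=0$ by Proposition \ref{qbe-100}(3), and again $x\Cap 0=0$.

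For (4), from $x\Cup y=1$ Proposition \ref{qbe-100}(4) gives $y\Cup x=1$, and the $\Cup$-form of weak linearity then forces $x\Cup y=y$ or $y\Cup x=x$, so one of $x,y$ is $1$. Combined with $x\sim y$ and the symmetry of $\sim$ from Lemma \ref{fqbe-20}(1), Lemma \ref{fqbe-20}(2) forces the other coordinate to be $1$ as well. No serious obstacle is anticipated here; the main subtleties are remembering to use the symmetries in Proposition \ref{qbe-100}(3),(4) to swap operands in $\Cap$ and $\Cup$, and applying Proposition \ref{qbe-20}(1) with the correct operand order so that $\le_Q$-absorption can be invoked.
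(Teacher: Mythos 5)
Your proposal is correct, and part $(1)$ is essentially the paper's proof (case split on $x\ra y=1$ or $y\ra x=1$, then $\le_Q$-absorption via Propositions \ref{qbe-60}$(2)$ and \ref{qbe-20}$(1)$). For $(2)$--$(4)$, however, you take a genuinely different route: you first observe that weak linearity collapses the hypotheses, namely $x\Cap y=0$ forces $x=0$ or $y=0$ (by Proposition \ref{wlqbe-20}$(c)$ together with Proposition \ref{qbe-100}$(3)$), and dually $x\Cup y=1$ forces $x=1$ or $y=1$, after which $(2)$ and $(3)$ reduce to the trivial identities $z^*\Cap(z\ra y)=z^*$, $(z\ra x)\Cap z^*=z^*$ and $0\Cap u=u\Cap 0=0$, and $(4)$ follows from Lemma \ref{fqbe-20}$(1)$,$(2)$. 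This degeneracy argument is sound; the only loose point is calling the two subcases of $(2)$ ``symmetric'' when $\Cap$ is not commutative, but the $x=0$ case is immediate from the definition of $\le_Q$ (since $z^*\le_Q z\ra y$ gives $z^*=z^*\Cap(z\ra y)$) and the $y=0$ case is exactly your appeal to Proposition \ref{qbe-20}$(1)$, so there is no gap. The paper instead argues uniformly, without exploiting the collapse: it derives $(2)$ from $(1)$ by substituting $x=(x\ra y)^*$ and $y=(y\ra x)^*$ (using $x\ra y=x^*$, $y\ra x=y^*$ when $x\Cap y=0$), proves $(3)$ by a computation feeding $(2)$ with the element $(x^*\Cup(z\Cap y)^*)^*$, and proves $(4)$ from $(1)$ via the identities $x\ra y=y$, $y\ra x=x$ extracted from $x\Cup y=y\Cup x=1$. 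Your version is shorter and makes transparent that in a weakly linear QW algebra the hypotheses of $(2)$--$(4)$ only occur in degenerate situations; the paper's version yields identity-style manipulations that do not depend on that collapse and illustrate how $(1)$ propagates to the other items.
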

\begin{proof}
$(1)$ Since $X$ is weakly linear, for all $x,y\in X$ we have $x\ra y=1$ or $y\ra x=1$. 
For $x\ra y=1$, we get that $((x\ra y)\ra z)\Cap ((y\ra x)\ra z)=z\Cap ((y\ra x)\ra z)=z$, 
since by Proposition \ref{qbe-60}$(2)$, $z\le_Q (y\ra x)\ra z$. 
Similarly $x\ra y=1$ implies $((x\ra y)\ra z)\Cap ((x\ra y)\ra z)\Cap z)=((x\ra y)\ra z)\Cap z=z$, 
by Proposition \ref{qbe-20}$(1)$, since $z\le_Q (x\ra y)\ra z$. \\
$(2)$ By Proposition \ref{qbe-60}$(1)$ we have $x\ra (y\Cap x)=x\ra y$ and $y\ra (x\Cap y)=y\ra x$, and 
by Proposition \ref{qbe-100}$(3)$, $x\Cap y=0$, implies $y\Cap x=0$. 
It follows that $x\ra y=x^*$ and $y\ra x=y^*$, so that $(x\ra y)^*=x$ and $(y\ra x)^*=y$. 
Using $(1)$, we get: 
$(z\ra x)\Cap (z\ra y)=(z\ra (x\ra y)^*)\Cap (z\ra (y\ra x)^*)=((x\ra y)\ra z^*)\Cap ((y\ra x)\ra z^*)=z^*$. \\
$(3)$ Since $x^*\le x^*\Cup (z\Cap y)^*$, we have $x^*\ra (x^*\Cup (z\Cap y)^*)=1$, and so 
$(x^*\Cup (z\Cap y)^*)^*\ra x=x^*\ra (x^*\Cup (z\Cap y)^*)=1$. 
Using Proposition \ref{qbe-100}$(1)$, we get \\
$\hspace*{1.00cm}$ $(x^*\Cup (z\Cap y)^*)^*\ra (y\Cap z)=(y\Cap z)^*\ra (x^*\Cup (z\Cap y)^*)$ \\
$\hspace*{5.40cm}$ $=(y\Cap z)^*\ra ((x^*\ra (z\Cap y)^*)\ra (z\Cap y)^*)$ \\
$\hspace*{5.40cm}$ $=(x^*\ra (z\Cap y)^*)\ra ((y\Cap z)^*\ra (z\Cap y)^*)$ \\
$\hspace*{5.40cm}$ $=(x^*\ra (z\Cap y)^*)\ra 1=1$. \\
Since $x\Cap (y\Cap z)=0$, using $(2)$ it follows that \\
$\hspace*{2.00cm}$ $1=((x^*\Cup (z\Cap y)^*)^*\ra x)\Cap ((x^*\Cup (z\Cap y)^*)^*\ra (y\Cap z))=x^*\Cup (z\Cap y)^*$. \\
Hence $x^*\Cup (z\Cap y)^*=1$, so that  $(x\Cap (z\Cap y))^*=1$, that is $x\Cap (z\Cap y)=0$. \\
$(4)$ By Proposition \ref{qbe-100}$(4)$, $x\Cup y=1$ implies $y\Cup x=1$. 
Using Proposition \ref{qbe-60}$(1)$, $(x^*\ra y^*)\ra (y^*\Cap x^*)=x^*$ and $(y^*\ra x^*)\ra (x^*\Cap y^*)=y^*$. 
It follows that $(x\ra y)\ra (x\Cup y)^*=y^*$ and $(y\ra x)\ra (y\Cup x)^*=x^*$.
Since $x\Cup y=y\Cup x=1$, we get $x\ra y=y$ and $y\ra x=x$.  
From $x\sim y$, there exists $\alpha\in X$ such that $x\le_Q \alpha\le_Q x$ and $y\le_Q \alpha\le_Q y$, 
that is $x\ra \alpha =1$ and $y\ra \alpha =1$. 
By $(1)$, we have $((x\ra y)\ra \alpha)\Cap ((y\ra x)\ra \alpha)=\alpha$, hence 
$(y\ra \alpha)\Cap (x\ra \alpha)=\alpha$. 
It follows that $\alpha=1$, and so $x=y=1$. 
\end{proof}

The notion of a prime deductive system is used to characterize the weakly linear quotient of a QW algebra. 

\begin{definition} \label{wlqbe-40} 
\emph{
$F\in \mathcal {DS}(X)$ is called a \emph{prime deductive system} if, for all $x,y\in X$, $x\ra y\in F$ or 
$y\ra x\in F$. 
}
\end{definition}

\begin{proposition} \label{wlqbe-50} Let $F\in \mathcal {DS}(X)$. The following are equivalent: \\
$(a)$ $F$ is prime; \\
$(b)$ for all $x,y\in X$, $x\Cup y\in F$ implies $x\in F$ or $y\in F$. 
\end{proposition}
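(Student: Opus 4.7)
The plan is to go in both directions, the key tool being the identity $x\Cup y=(x\ra y)\ra y$ together with the ``cancellation'' identities of Proposition \ref{qbe-60}$(8),(9)$ and the join identity of Proposition \ref{qbe-80}$(7)$.

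For the direction $(a)\Rightarrow (b)$, I would suppose $F$ is prime and $x\Cup y\in F$. By primeness, either $x\ra y\in F$ or $y\ra x\in F$. In the first case, Proposition \ref{qbe-60}$(9)$ gives $(x\Cup y)\ra y=x\ra y\in F$, so applying $(DS_2)$ to $x\Cup y\in F$ and $(x\Cup y)\ra y\in F$ yields $y\in F$. In the second case, Proposition \ref{qbe-60}$(8)$ gives $(x\Cup y)\ra x=y\ra x\in F$, and $(DS_2)$ now yields $x\in F$. So in either case $x\in F$ or $y\in F$, proving $(b)$.

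For the direction $(b)\Rightarrow (a)$, I would observe that, by Proposition \ref{qbe-80}$(7)$, $(x\ra y)\Cup (y\ra x)=1\in F$, so condition $(b)$ applied to $x\ra y$ and $y\ra x$ immediately yields $x\ra y\in F$ or $y\ra x\in F$, which is exactly the primeness of $F$.

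There is no real obstacle here; the only subtle point is noticing that the asymmetric hypothesis ``$x\Cup y\in F$'' (rather than ``$y\Cup x\in F$'') still supplies both alternatives, which is taken care of cleanly by the pair of identities $(x\Cup y)\ra y=x\ra y$ and $(x\Cup y)\ra x=y\ra x$ from Proposition \ref{qbe-60}$(8),(9)$.
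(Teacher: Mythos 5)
Your proof is correct, and it reaches the conclusion by a slightly cleaner mechanism than the paper, although the overall skeleton (case split on primeness for $(a)\Rightarrow(b)$, the identity $(x\ra y)\Cup (y\ra x)=1$ for $(b)\Rightarrow(a)$) is the same. In the direction $(a)\Rightarrow(b)$, the paper does not use modus ponens directly: it first notes via Proposition \ref{qbe-100} that $x\Cup y\in F$ iff $y\Cup x\in F$, and then extracts the element by a product computation, namely $y=(x\Cup y)\odot(x\ra y)$ and $x=(y\Cup x)\odot(y\ra x)$, which rest on Proposition \ref{qbe-20-10}$(2)$ together with closure of $F$ under $\odot$ (condition $(F_1)$). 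You instead invoke the identities $(x\Cup y)\ra y=x\ra y$ and $(x\Cup y)\ra x=y\ra x$ of Proposition \ref{qbe-60}$(8),(9)$ and apply $(DS_2)$ to $x\Cup y\in F$; this avoids both the $\odot$-identity and the swap from $x\Cup y$ to $y\Cup x$, using only the defining conditions of a deductive system. Both arguments are valid for $F\in\mathcal{DS}(X)$; yours is the more economical one here, while the paper's product-based manipulation is of the kind that also works when one prefers to stay in the q-deductive-system (filter-style) vocabulary $(F_1)$, $(F_2')$. The direction $(b)\Rightarrow(a)$ is identical in both proofs.
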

\begin{proof}
$(a)\Rightarrow (b)$ Let $F\in \mathcal {DS}(X)$, $F$ prime and let $x,y \in X$. Then $x\ra y\in F$ or $y\ra x\in F$, 
and suppose that $x\Cup y\in F$. 
Since by Proposition \ref{qbe-100}, $(x\Cup y)\ra (y\Cup x)=(y\Cup x)\ra (x\Cup y)=1\in F$, we have 
$x\Cup y\in F$ if and only if $y\Cup x\in F$. 
Let us consider two cases: \\   
$(i)$ if $x\ra y\in F$, $x\Cup y\in F$, then $y=(x\Cup y)\odot (x\ra y)\in F$  
(since $y\le_Q x\ra y$, by Proposition \ref{qbe-20-10}$(2)$ we have $((x\ra y)\ra y)\odot (x\ra y)=y)$. \\
$(ii)$ if $y\ra x\in F$, $x\Cup y\in F$, then $y\Cup x\in F$ and $x=(y\Cup x)\odot (y\ra x)\in F$. \\
$(b)\Rightarrow (a)$ Let $x,y\in X$ such that $x\Cup y\in F$ implies $x\in F$ or $y\in F$. 
Since by Proposition \ref{qbe-80}$(7)$, $(x\ra y)\Cup (y\ra x)=1\in F$, we get $x\ra y\in F$ or $y\ra x\in F$, 
that is $F$ is prime. 
\end{proof}

\begin{theorem} \label{wlqbe-60} Let $F\in \mathcal {DS}(X)$. The following are equivalent: \\
$(a)$ $F$ is prime; \\
$(b)$ $X/F$ is weakly linear.  
\end{theorem}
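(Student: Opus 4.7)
The plan is to unwind both sides of the biconditional into a single condition stated in $X$, and show that they literally coincide.

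First I would observe that weak linearity of $X/F$ means exactly that for all $[x],[y]\in X/F$, either $[x]\ra[y]=[1]$ or $[y]\ra[x]=[1]$. By Theorem \ref{cqbe-60}, $\equiv_F$ is a congruence on $X$, so the implication in the quotient is given by $[x]\ra[y]=[x\ra y]$. Furthermore, again by Theorem \ref{cqbe-60}, the class $[1]$ is precisely $F$, so $[x\ra y]=[1]$ is equivalent to $x\ra y\in F$. Consequently, $X/F$ is weakly linear if and only if for all $x,y\in X$, $x\ra y\in F$ or $y\ra x\in F$, which is exactly the definition of $F$ being prime (Definition \ref{wlqbe-40}).

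Thus the two directions follow by this translation. For $(a)\Rightarrow(b)$, assume $F$ is prime, pick any $[x],[y]\in X/F$, and use primeness to place $x\ra y$ or $y\ra x$ in $F$, which in the quotient yields $[x]\ra[y]=[1]$ or $[y]\ra[x]=[1]$. For $(b)\Rightarrow(a)$, assume $X/F$ is weakly linear and apply the definition to arbitrary $x,y\in X$; the resulting equality in $X/F$ transports back to $x\ra y\in F$ or $y\ra x\in F$.

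There is no real obstacle: the work has already been done in establishing that $\equiv_F$ is a congruence with $\{x\mid x\equiv_F 1\}=F$. The only point to be careful about is citing Theorem \ref{cqbe-60} correctly so that both the compatibility of $\equiv_F$ with $\ra$ and the identification of the class of $1$ with $F$ are in force; once these are invoked, the equivalence is a direct rephrasing.
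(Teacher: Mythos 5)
Your proof is correct, and it is a more direct route than the paper's. You unwind Definition \ref{wlqbe-10} in the quotient: since Theorem \ref{cqbe-60} gives that $\equiv_F$ is a congruence (so $[x]\ra[y]=[x\ra y]$ in the quotient algebra) and that the class of $1$ is exactly $F$, weak linearity of $X/F$ is literally the statement ``$x\ra y\in F$ or $y\ra x\in F$ for all $x,y$,'' i.e.\ primeness. The paper instead goes through the $\Cup$-characterization of weak linearity (Proposition \ref{wlqbe-20}$(b)$): for $(a)\Rightarrow(b)$ it assumes $x\Cup y\not\equiv_F y$, translates this via $(x\Cup y)\ra y=x\ra y$ (Proposition \ref{qbe-60}$(9)$) into $x\ra y\notin F$, invokes primeness to get $y\ra x\in F$, and then computes $(y\Cup x)\Cup x\equiv_F x$ using Proposition \ref{qbe-60}$(5)$ to conclude $y\Cup x\equiv_F x$; the converse direction is the same translation run backwards. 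Both arguments rest on the same foundation, namely Theorem \ref{cqbe-60}; your version avoids the auxiliary computation with $\Cup$ entirely, at the cost of making explicit (as you do) that the quotient implication is the induced one and that $[z]=[1]$ iff $z\in F$. The paper's detour through Proposition \ref{wlqbe-20} buys nothing logically here beyond restating weak linearity in $\Cup$-form, so your shorter argument is a legitimate simplification.
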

\begin{proof}
$(a)\Rightarrow (b)$ Let $F$ be a prime deductive system of $X$ and let $x,y\in X$. 
Suppose that $x\Cup y\not\equiv_F y$, that is $(x\Cup y)\ra y\not\equiv_F 1$, so that $x\ra y\not\equiv_F 1$ 
(by Proposition \ref{qbe-60}$(9)$, $(x\Cup y)\ra y=x\ra y$). 
It follows that $x\ra y\notin F$. Since $F$ is prime, we have $y\ra x\in F$. 
Hence $(y\Cup x)\ra x\in F$, that is $(y\Cup x)\ra x\equiv_F 1$, so that $((y\Cup x)\ra x)\ra x\equiv_F 1\ra x=x$. 
It follows that $(y\Cup x)\Cup x\equiv_F x$. Since by Proposition \ref{qbe-60}$(5)$, 
$(y\Cup x)\Cup x=y\Cup x$, we get $y\Cup x\equiv_F x$. Then by Proposition \ref{wlqbe-20}, $X/F$ is weakly linear. \\
$(b)\Rightarrow (a)$ Suppose that $X/F$ is weakly linear, that is, for all $x,y\in X$, $x\Cup y\equiv_F y$ or 
$y\Cup x\equiv_F x$. It follows that $(x\Cup y)\ra y\equiv_F 1$ or $(y\Cup x)\ra x\equiv_F 1$, hence 
$(x\Cup y)\ra y\in F$ or $(y\Cup x)\ra x\in F$. 
Thus $x\ra y\in F$ or $y\ra x\in F$, so that $F$ is a prime deductive system. 
\end{proof}

\begin{corollary} \label{wlqbe-70} If $X$ is commutative and $F\in \mathcal {DS}(X)$, then $F$ is prime if and 
only if $X/F$ is linearly ordered. 
\end{corollary}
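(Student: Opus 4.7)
The plan is to reduce this corollary directly to Theorem~\ref{wlqbe-60}, which says $F$ is prime iff $X/F$ is weakly linear. The only gap to bridge is the distinction between \emph{weakly linear} (linearly ordered under $\le$, per Definition~\ref{wlqbe-10}) and \emph{linearly ordered} in the usual sense (under $\le_Q$). In the commutative setting these two notions must collapse, and that is essentially the entire content of the corollary.

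First I would invoke Remark~\ref{qbe-130}: a commutative QW algebra is definitionally equivalent to a Wajsberg algebra, which is exactly the statement that $\le_Q$ coincides with $\le$ on $X$. Next, I would verify that the quotient $X/F$ inherits commutativity from $X$. This should be routine: by Theorem~\ref{cqbe-60} the quotient is a QW algebra via the congruence $\equiv_F$, and because $\equiv_F$ is compatible with $\Cup$ (Corollary~\ref{cqbe-70}), the identity $x \Cup y = y \Cup x$ passes through the projection $X \to X/F$. Hence $X/F$ is itself a commutative QW algebra, so by Remark~\ref{qbe-130} applied to the quotient, $\le_Q$ and $\le$ coincide on $X/F$ as well.

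Once those two observations are in place, the corollary is immediate: by Theorem~\ref{wlqbe-60}, $F$ is prime iff $X/F$ is weakly linear, i.e.\ linearly ordered under $\le$; and since $\le$ coincides with $\le_Q$ on the commutative quotient $X/F$, this is the same as saying $X/F$ is linearly ordered. I would state the equivalence in one short display chain, referencing Theorem~\ref{wlqbe-60} and Remark~\ref{qbe-130}.

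The only potential obstacle is the commutativity-passes-to-quotient step, which, although conceptually routine, is not stated explicitly in the excerpt. To be safe I would spell it out using Corollary~\ref{cqbe-70}: for any $x,y \in X$, from $x \Cup y = y \Cup x$ in $X$ one gets $[x]\Cup [y] = [x \Cup y] = [y \Cup x] = [y] \Cup [x]$ in $X/F$, where $[\,\cdot\,]$ denotes the $\equiv_F$-class. With this remark in hand the corollary follows in one line.
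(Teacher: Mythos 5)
Your proposal is correct and follows exactly the route the paper intends for this (unproved) corollary: Theorem \ref{wlqbe-60} plus the observation that commutativity descends to $X/F$ via Corollary \ref{cqbe-70}, so that the quotient is a Wajsberg algebra in which $\le$ and $\le_Q$ coincide and ``weakly linear'' becomes ``linearly ordered'' (cf.\ Remark \ref{qlqbe-30}). The only cosmetic point is that the coincidence of $\le$ and $\le_Q$ is stated in the sentence preceding Remark \ref{qbe-130} (the characterization of when a QW algebra is a Wajsberg algebra), not in the remark itself; otherwise nothing is missing.
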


\begin{definition} \label{qlqbe-10} 
\emph{
A QW algebra $X$ is said to be \emph{quasi-linear} if, for all $x,y\in X$, $x\nleq_Q y$ implies $y< x$. 
}
\end{definition}

\begin{proposition} \label{qlqbe-20} The following are equivalent: \\
$(a)$ $X$ is quasi-linear; \\
$(b)$ for all $x,y\in X$, $x\nleq y$ implies $y<_Q x;$ \\
$(c)$ for all $x,y,z\in X$, $z\ra x=z\ra y\ne 1$ implies $x=y$.  
\end{proposition}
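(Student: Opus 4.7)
My plan is to establish the chain $(a)\Leftrightarrow (b)$ by contrapositive shuffles, and then $(a)\Leftrightarrow (c)$ by combining the identity $z\ra (x\Cap z) = z\ra x$ from Proposition \ref{qbe-60}$(1)$ with the cancellation law in Proposition \ref{qbe-20-10}$(1)$.

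For $(a)\Rightarrow (b)$, suppose $x\nleq y$. Since $\le_Q$ implies $\le$ (Proposition \ref{qbe-20}$(4)$), we also have $x\nleq_Q y$; hypothesis $(a)$ then gives $y<x$, so already $y\ne x$. To upgrade $y\le x$ to $y\le_Q x$ I would argue by contradiction: if $y\nleq_Q x$, applying $(a)$ to the swapped pair $(y,x)$ gives $x<y$, whence $x\le y$, contradicting $x\nleq y$. Therefore $y\le_Q x$ and $y<_Q x$. The converse $(b)\Rightarrow (a)$ is symmetric: $x\nleq_Q y$ forces $x\ne y$ (by reflexivity of $\le_Q$, Proposition \ref{qbe-20}$(2)$), and if $y\nleq x$ held, $(b)$ applied to $(y,x)$ would yield $x<_Q y$, contradicting $x\nleq_Q y$; hence $y\le x$ and $y<x$.

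For $(a)\Rightarrow (c)$, suppose $z\ra x=z\ra y\ne 1$. Were $x\nleq_Q z$, $(a)$ would give $z<x$, so $z\le x$, i.e.\ $z\ra x=1$, against the hypothesis. Hence $x\le_Q z$, and identically $y\le_Q z$, after which the cancellation law in Proposition \ref{qbe-20-10}$(1)$ delivers $x=y$. For the converse $(c)\Rightarrow (a)$, the key step is the observation that Proposition \ref{qbe-60}$(1)$ provides the equality $z\ra(x\Cap z)=z\ra x$. Whenever $z\ra x\ne 1$, hypothesis $(c)$ applied to the pair $(x,\,x\Cap z)$ therefore forces $x=x\Cap z$, i.e.\ $x\le_Q z$. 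Taking the contrapositive, $x\nleq_Q z$ yields $z\ra x=1$, so $z\le x$; reflexivity of $\le_Q$ gives $z\ne x$, and we conclude $z<x$, which after relabelling is precisely $(a)$.

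I do not anticipate a serious obstacle. The only point requiring care is that $\le$ is not antisymmetric in a general QW algebra (witness $a,b$ in Example \ref{fqbe-100}), so $x\le y$ together with $y\le x$ does \emph{not} collapse to $x=y$. The plan sidesteps this by deriving each contradiction directly from a single inequality ($x\le y$ against $x\nleq y$, or $x\le_Q y$ against $x\nleq_Q y$), never from a would-be antisymmetry argument.
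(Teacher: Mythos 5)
Correct, and essentially the paper's argument: you rely on the same two key tools --- the identity $z\ra(x\Cap z)=z\ra x$ from Proposition \ref{qbe-60}$(1)$ for $(c)\Rightarrow(a)$ and the cancellation law of Proposition \ref{qbe-20-10}$(1)$ for the implication into $(c)$ --- together with the same swapped-pair contradiction arguments linking the $\le$ and $\le_Q$ statements. The only difference is organizational: you prove the two biconditionals $(a)\Leftrightarrow(b)$ and $(a)\Leftrightarrow(c)$ directly instead of the paper's cycle $(a)\Rightarrow(b)\Rightarrow(c)\Rightarrow(a)$, and your handling of strictness via reflexivity of $\le_Q$ (rather than any appeal to antisymmetry of $\le$) matches the paper's intent while avoiding its redundant recomputation of $x\le y$ in the $(a)\Rightarrow(b)$ step.
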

\begin{proof}
$(a)\Rightarrow (b)$ Let $x,y\in X$ such that $x\nleq y$, and suppose $y\nless_Q x$. 
If $x=y$, then $x\ra y=1$, that is $x\le y$, hence we assume that $x\ne y$. 
Using $(a)$, $y\nless_Q x$ implies $x< y$, so that, by Proposition \ref{qbe-60}$(10)$, we have $x=y\Cap x$. 
It follows that: 
$x\ra y=(y\Cap x)\ra y=y^*\ra (y\Cap x)^*=y^*\ra (y^*\Cup x^*)=y^*\ra ((y^*\ra x^*)\ra x^*)=
(y^*\ra x^*)\ra (y^*\ra x^*)=1$, hence $x\le y$, a contradiction. 
Hence $y<_Q x$. \\
$(b)\Rightarrow (c)$ $z\ra x=z\ra y\ne 1$ implies $z\nleq x$ and $z\nleq y$, and by $(b)$ we have $x,y<_Q z$. 
Using the cancellation law (Proposition \ref{qbe-20-10}$(1)$), we get $x=y$. \\
$(c)\Rightarrow (a)$ Assume that $x\nleq_Q y$ and suppose $y\nless x$, that is $y\ra x\ne 1$. 
Hence, by Proposition \ref{qbe-60}$(1)$, we have $1\neq y\ra x=y\ra (x\Cap y)$. 
By $(c)$, we get $x=x\Cap y$, that is $x\le_Q y$, a contradiction. Thus $y< x$. 
\end{proof}

\begin{remark} \label{qlqbe-30} If $X$ is commutative, the notions of linearly ordered, weakly linear and 
quasi-linear QW algebras coincide. 
\end{remark}

\begin{example} \label{qlqbe-40} 
Let $X=\{0,a,b,c,1\}$ and let $(X,\ra,0,1)$ be the involutive BE algebra with $\ra$ and the corresponding 
operation $\Cap$ given in the following tables:  
\[
\begin{array}{c|ccccccc}
\ra & 0 & a & b & c & 1 \\ \hline
0   & 1 & 1 & 1 & 1 & 1 \\ 
a   & b & 1 & a & 1 & 1 \\ 
b   & a & 1 & 1 & 1 & 1 \\ 
c   & c & 1 & 1 & 1 & 1 \\
1   & 0 & a & b & c & 1  
\end{array}
\hspace{10mm}
\begin{array}{c|ccccccc}
\Cap & 0 & a & b & c & 1 \\ \hline
0    & 0 & 0 & 0 & 0 & 0 \\ 
a    & 0 & a & b & c & a \\ 
b    & 0 & b & b & c & b \\ 
c    & 0 & a & b & c & c \\
1    & 0 & a & b & c & 1 
\end{array}
.
\]
Then $X$ is a quantum-Wajsberg algebra, and $(X,\le_Q)$ is not linearly ordered: $a\Cap c=c\ne a$ and $c\Cap a=a\ne c$, 
hence $a\nleq_Q c$ and $c\nleq_Q a$. 
We can see that, for all $x,y\in X$ we have $x\ra y=1$ or/and $y\ra x=1$, hence $X$ is weakly linear. 
Moreover, we can easily check that, for all $x,y\in X$ such that $x\nleq_Q y$ (that is $x\ne x\Cap y$), we have 
$y\ra x=1$, that is $y< x$. It follows that $X$ is also quasi-linear. 
\end{example}

$\vspace*{5mm}$

\section{Concluding remarks}

As we mentioned, the ideals in QMV algebras have been introduced and studied  by R. Giuntini and 
S. Pulmannov\'a in \cite{Giunt7}, and these notions were also investigated in \cite{DvPu, DvPu1}.
In this paper, we defined the notions of q-deductive systems, deductive systems, maximal and strongly maximal 
q-deductive systems in QW algebras, as well as the congruences induced by deductive systems. 
We also defined the quotient QW algebra with respect to a deductive system, and investigated properties of 
quotient QW algebras depending on certain types of deductive systems. \\
R. Giuntini introduced in \cite{Giunt4} the notion of a commutative center of QMV algebras, proving that this 
structure is an MV algebra. Similarly, we can define the commutative center of a QW algebra, prove that it is a Wajsberg algebra, and  investigate its properties. \\ 
As another direction of research, based on the prime deductive systems, one can endow a QW algebra with 
certain topologies and study the properties of the corresponding topological spaces.

$\vspace*{2mm}$
          
\begin{center}
\sc Acknowledgement 
\end{center}
The author is very grateful to the anonymous referees for their useful remarks and suggestions on the subject that helped improving the presentation.

$\vspace*{5mm}$

\end{document}